\documentclass[11pt,reqno]{article}
\usepackage{amsmath,amssymb,amsthm,amsfonts,amstext,amsbsy,amscd}
\usepackage{mathrsfs}
\usepackage[utf8]{inputenc}
\usepackage{bbm,dsfont}
\usepackage{xcolor}
\usepackage{comment}
\usepackage[arrow, matrix, curve]{xy}
\usepackage[colorlinks,citecolor=blue,urlcolor=blue,linkcolor = blue,hypertexnames=false]{hyperref}

\theoremstyle{plain}
\newtheorem{thm}{Theorem}

\newtheorem{corollary}{Corollary}
\newtheorem{lemma}{Lemma}
\newtheorem{assumption}{Assumption}

\theoremstyle{definition}

\theoremstyle{remark}

\DeclareMathOperator{\E}{{\mathbb E}}

\DeclareMathOperator{\R}{{\mathbb R}}

\DeclareMathOperator{\N}{{\mathbb N}}

\DeclareMathOperator{\PP}{{\mathbb P}}

\DeclareMathOperator{\M}{{\mathcal M}}
\DeclareMathOperator{\HH}{{\mathcal H}}

\numberwithin{equation}{section}

\sloppy


\allowdisplaybreaks[4]

\begin{document}
\title{A kernel-based analysis of Laplacian Eigenmaps}
\author{Martin Wahl\thanks{
Universit\"{a}t Bielefeld, Germany. E-mail: martin.wahl@math.uni-bielefeld.de 
\newline
\textit{2010 Mathematics Subject Classification.} 62H25, 60B20, 15A42, 47A55, 47D07, 35K08\newline
\textit{Key words and phrases.} empirical graph Laplacian, Laplace-Beltrami operator, heat semigroup, heat kernel, principal components analysis, empirical covariance operator, reproducing kernel Hilbert space, perturbation theory, concentration inequalities.}}
\date{}


\maketitle

\begin{abstract}
Given i.i.d.~observations uniformly distributed on a closed manifold $\M\subseteq \R^p$, we study the spectral properties of the associated empirical graph Laplacian based on a Gaussian kernel. Our main results are non-asymptotic error bounds, showing that the eigenvalues and eigenspaces of the empirical graph Laplacian are close to the eigenvalues and eigenspaces of the Laplace-Beltrami operator of $\M$. In our analysis, we connect the empirical graph Laplacian to kernel principal component analysis, and consider the heat kernel of $\M$ as reproducing kernel feature map. This leads to novel points of view and allows to leverage results for empirical covariance operators in infinite dimensions.
\end{abstract}

\setcounter{tocdepth}{2}

\tableofcontents

\section{Introduction}
\subsection{Motivation}
Laplacian Eigenmaps \cite{6789755} and Diffusion Maps \cite{CL} are nonlinear dimensionality reduction methods that use the eigenvalues and eigenvectors of (un)normalized graph Laplacians. Both methods are applied when the data is sampled from a low-dimensional manifold, embedded in a high-dimensional Euclidean space. From a mathematical point of view, the fundamental problem is to understand these so-called empirical graph Laplacians as spectral approximations of the underlying Laplace-Beltrami operator.

Spectral convergence properties of graph Laplacians have been studied first by \cite{Belkin2006ConvergenceOL}, based on pointwise convergence results \cite{MR2460286,GK} and perturbation theory for linear operators on Banach spaces \cite{MR2396807,MR3405533}. Since then, different approaches have been proposed in order to study the spectral properties of graph Laplacians (see e.g.~\cite{MR3299811,ZS,TGH,MR4393800,CW} and the references therein). Besides pointwise convergence results, an important role is played by Dirichlet form convergence of the empirical graph Laplacian towards the Laplace-Beltrami operator.

A different line of research studies principal component analysis (PCA) in high dimensions \cite{W,8412585} or in infinite dimensions \cite{HE15,6790375}, where the latter also includes functional and kernel PCA. Principal component analysis is a standard dimensionality reduction method that uses the eigenvalues and eigenspaces of the empirical covariance matrix or operator to find the so-called principal components, capturing most of the variance of the (empirical) distribution. Recently, there has been a lot of interest and activity in the (spectral) analysis of empirical covariance operators in infinite dimensions (see, e.g., \cite{MR2450103,MR3556768,MR4065170,RW17,MR4517351,MR4564624} and the references therein).
A related problem studies approximations of integral operators \cite{MR1781185,MR2600634}.

In this paper, we study empirical graph Laplacians through the lens of kernel PCA. In particular, we connect the eigenvalue and eigenvector problem for the empirical graph Laplacian to a kernel PCA problem, providing a novel functional analytic framework. To achieve this, the semigroup framework plays an important role. First, we interpret the Laplace-Beltrami operator of $\M$ as the generator of the heat semigroup of $\M$ \cite{Davies,G}. Second, the heat semigroup is an integral operator on the space of all square-integrable functions on $\M$, with integral kernel being the heat kernel of $\M$. Restricting its domain to the reproducing kernel Hilbert space (RKHS) induced by the heat kernel, the heat semigroup can be interpreted as the covariance operator of the random variable obtained by embedding the original observation on the manifold using the heat kernel as reproducing kernel feature map. This leads to novel points of view and allows us to use results for empirical covariance operators in infinite dimensions. In particular, this paves the way to apply Hilbert space perturbation theory in the analysis of empirical graph Laplacians.

\subsection{Laplacian Eigenmaps and Diffusion Maps}

In this section, we introduce the Laplacian Eigenmaps and the Diffusion Maps algorithm. Let $X_1,\dots,X_n$ be data points in $\R^p$. Let $\langle \cdot,\cdot\rangle$ be the standard inner product in $\R^p$ and let $\|\cdot\|_2$ be the corresponding Euclidean norm in $\R^p$. Finally, let
\begin{align}\label{eq:Guaussian:kernel}
    w_t(x,y)=\frac{1}{(4\pi t)^{d/2}}\exp\Big(-\frac{\|x-y\|_2^2}{4t}\Big),\qquad x,y\in \R^p,
\end{align}
be the Gaussian kernel with hyperparameters $d\in \N$ and $t>0$. Then the (unnormalized) empirical graph Laplacian or point cloud Laplacian with repsect to $w_t$ is given by (see, e.g.,~\cite{L})  
\begin{align*}
    \mathscr{L}_{w_t}=\frac{1}{t}(D_{w_t}-W_t)\in\R^{n\times n}
\end{align*}
with adjacency matrix $W_t\in\R^{n\times n}$ defined by
\begin{align*}
    (W_t)_{ij}=\frac{1}{n}w_t(X_i,X_j)
\end{align*}
and degree matrix $D_{w_t}\in\R^{n\times n}$ being a diagonal matrix with 
\begin{align*}
    (D_{w_t})_{ii}=\frac{1}{n}\sum_{j=1}^nw_t(X_i,X_j).
\end{align*}
Graph Laplacians have been intensively studied in spectral graph theory (see, e.g.,~\cite{C} or \cite{L} and the references therein). A first key identity of the unnormalized graph Laplacian is 
\begin{align}\label{eq:key:identity:graph:Laplacian}
    \langle u,  \mathscr{L}_{w_t}u\rangle=\frac{1}{2n}\sum_{i=1}^n\sum_{j=1}^nw_t(X_i,X_j)(u_i-u_j)^2,\qquad  u\in\R^n,
\end{align}
which can be seen as an empirical Dirichlet form (see, e.g., Proposition 1 in \cite{L}). In particular, $\mathscr{L}_{w_t}$ is symmetric and positive semi-definite. Let 
\begin{align*}
    0\leq \lambda_1(\mathscr{L}_{w_t})\leq \dots\leq \lambda_n(\mathscr{L}_{w_t})
\end{align*}
be the eigenvalues of $\mathscr{L}_{w_t}$ (in non-decreasing order), and let 
\begin{align*}
    u_1(\mathscr{L}_{w_t}),\dots,u_n(\mathscr{L}_{w_t})\in \R^n
\end{align*}
be an orthonormal basis of corresponding eigenvectors. Since $w_t(x,y)>0$ for all $x,y\in\R^n$, it follows from Proposition 2 in \cite{L} that $0=\lambda_1(\mathscr{L}_{w_t})<\lambda_2(\mathscr{L}_{w_t})$ and that $u_1(\mathscr{L}_{w_t})=n^{-1/2}(1,\dots,1)^\top$. The spectral characteristics of $\mathscr{L}_{w_t}$ are often used in spectral clustering. Indeed, given a natural number $1\leq j\leq n$, a reduced coordinate system is given by 
\begin{align}\label{eq.DMs}
    \varphi^{(j)}(X_i)=\begin{pmatrix}
        u_{1i}(\mathscr{L}_{w_t})\\ \vdots \\ u_{ji}(\mathscr{L}_{w_t})
    \end{pmatrix}\in\R^j,\qquad 1\leq i\leq n,
\end{align}
where $u_{ki}(\mathscr{L}_{w_t})$ is the $i$th coordinate of the $k$th eigenvector. Often, one also introduces a weighting by multiplying the entries $u_{ki}(\mathscr{L}_{w_t})$ by $\lambda_{k}(\mathscr{L}_{w_t})^m$ with $m>0$. Moreover, often $\mathscr{L}_{w_t}$ is replaced by the random walk graph Laplacian $I_n-D_{w_t}^{-1}\mathscr{L}_{w_t}$, where $I_n\in\R^{n\times n}$ is the identity matrix, in which case \eqref{eq.DMs} is also called (truncated) diffusion map and $\|\varphi^{(j)}(X_{i_1})-\varphi^{(j)}(X_{i_2})\|_2$ is also called diffusion distance between data points $X_{i_1}$ and $X_{i_2}$ (see, e.g., \cite{CL} and~\cite{BSS}).

\subsection{Laplace-Beltrami operator, heat semigroup and heat kernel}

Let $\mathcal{M}$ be a submanifold of $\R^p$ of dimension $d$, equipped with the Riemannian metric induced by the ambient space. We assume that $\mathcal{M}$ is closed (that is, compact and without boundary) and that the volume of $\mathcal{M}$ is $1$. Let
\begin{align}\label{eq:LB:operator}
    H=-\Delta|_{W^2(\mathcal{M})}
\end{align}
be the Laplace-Beltrami operator of $\mathcal{M}$ (see Section 3.7 in \cite{G}, Chapter 5 in \cite{Davies}, or \cite{Ros}), which we consider with domain being the Sobolev spaces $W^2(\mathcal{M})$. The sign convention implies that $H$ is a positive self-adjoint operator. Let
\begin{align}\label{eq:heat:semigroup}
    (e^{-tH})_{t> 0}
\end{align}
be the heat semigroup on $L^2(\mathcal{M})$ associated with $H$ (see, e.g., Theorem 4.9 in \cite{G} or Chapter 5 in \cite{Davies}). For each $t>0$, $e^{-tH}$ is a a positive self-adjoint bounded operator on $L^2(\mathcal{M})$. Moreover, since $\mathcal{M}$ is closed, $e^{-tH}$ is ultracontractive (see, e.g., Theorem 7.6 in \cite{G}), implying the existence of an integral kernel $k_t$ satisfying
\begin{align*}
    e^{-tH}f(x)=\int_{\mathcal{M}}k_t(x,y)f(y)dy
\end{align*}
for all $x\in\mathcal{M}$, all $t>0$ and all $f\in L^2(\mathcal{M})$, where $dy$ denotes the volume measure on $\mathcal{M}$ (see, e.g., Theorem 5.2.1 in \cite{Davies} or Theorem 7.13 in \cite{G}). The kernel $k_t$ is called heat kernel of $\M$. It is symmetric, bounded, non-negative (in fact, we have $k_t(x,y)>0$ whenever $x$ and $y$ belong to the same connected component of $\mathcal{M}$ and $k_t(x,y)=0$ otherwise), and satisfies $\int_{\M} k_t(x,y)\, dy=1$ for all $x\in\M$. In particular, $e^{-tH}$ is additionally trace-class for every $t>0$, implying that there exists an orthonormal basis $\phi_1,\phi_2,\dots$ of $L^2(\mathcal{M})$ and an non-decreasing sequence $0=\mu_1\leq \mu_2\leq \dots \rightarrow \infty$ of non-negative real numbers such that (see, e.g., Lemma 7.2.1 in \cite{Davies} or Theorem 10.13 in \cite{G})
\begin{align}\label{eq:spectral:thm}
    e^{-tH}\phi_j=\phi_j,\qquad \sum_{j=1}^\infty e^{-\mu_jt}<\infty.
\end{align}
For each $j\in \N$, $\phi_j$ is in the domain of $H$, implying that 
\begin{align*}
    H\phi_j=\lim_{t\rightarrow 0}\frac{I-e^{-tH}}{t}\phi_j=\mu_j\phi_j,
\end{align*}
where $I$ denotes the identity map. Moreover, the heat kernel admits the expansion 
\begin{align}\label{eq:Mercer:heat:kernel}
    k_t(x,y)=\sum_{j=1}^\infty e^{-\mu_jt}\phi_j(x)\phi_j(y)
\end{align}
for all $t>0$ and all $x,y\in\mathcal{M}$, which converges in $L^2(\mathcal{M}\times \mathcal{M})$ as well as absolutely and 
 uniformly in $[a,b]\times \mathcal{M}\times \mathcal{M}$ for all real numbers $b>a>0$. The multiplicity of the eigenvalue $0$ is denoted by $m$, meaning that
\begin{align*}
    0=\mu_1=\dots=\mu_m<\mu_{m+1}.
\end{align*}
The number $m$ corresponds to the number of connected components of $\mathcal{M}$. Moreover, if $\mathcal{M}_1,\dots,\mathcal{M}_m$ are the connected components of $\mathcal{M}$, then we can choose $\phi_j=\operatorname{vol}(\mathcal{M}_j)^{-1/2}\mathbf{1}_{\mathcal{M}_j}$ as the first $m$ eigenfunctions of $H$ (see, e.g., Section 2.2 in \cite{Grigor’yan_1999}).
 
\subsection{Two main consequences}

The main goal of this paper is to study the behavior of the eigenvalues and eigenvectors of $\mathscr{L}_{w_t}$ under the following manifold assumption.

\begin{assumption}\label{ass:manifold:hypothesis}
    Let $X,X_1,\dots,X_n$ be i.i.d.~random variables that are uniformly distributed on $\mathcal{M}$, where $\mathcal{M}$ is a closed submanifold of $\R^p$ with $\dim \M=d$ and $\operatorname{vol}(M)=1$ such that conditions (\hyperlink{C1}{C1}), (\hyperlink{C3}{C3}), (\hyperlink{C4}{C4}) and (\hyperlink{C5}{C5}) from below hold.
\end{assumption}

In this section, we formulate two main consequences of the analysis presented in Sections \ref{Sec:Analysis:LMAP} and \ref{sec:approx:error}. We say that an event holds with high probability, if it holds with probability at least $1-Cn^{-c}$, where $c,C>0$ are constants depending only on the manifold constants from  (\hyperlink{C1}{C1}), (\hyperlink{C3}{C3}), (\hyperlink{C4}{C4}) and (\hyperlink{C5}{C5}). The following result deals with eigenvalues.

\begin{corollary}\label{thm:thm1}
    Grant Assumption \ref{ass:manifold:hypothesis}. Then there are constants $c,C>0$ depending only on the constants in (\hyperlink{C1}{C1}), (\hyperlink{C3}{C3}), (\hyperlink{C4}{C4}) and (\hyperlink{C5}{C5}) such that the following holds. Let $n\geq 2$ and let $t\in(0,1]$ be such that $\frac{\log n}{nt^{d/2}}\leq c$. Then
    \begin{align*}
        \forall 1\leq j\leq n,\quad |\lambda_j(\mathscr{L}_{w_t})-\mu_j|\leq C\Big(\sqrt{\frac{\log n}{nt^{d/2+2}}}+\mu_jt\log^2(\tfrac{e}{t})+(\mu_j^2\vee 1)t\Big).
    \end{align*}
    with high probability.
\end{corollary}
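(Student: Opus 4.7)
The strategy is to recast the spectral problem for $\mathscr{L}_{w_t}$ as a kernel PCA problem in the heat kernel RKHS $\HH_t$, and to split the total error into three distinct parts: a statistical concentration error in $\HH_t$, a kernel-approximation error comparing the ambient Gaussian kernel $w_t$ with the intrinsic heat kernel $k_t$ of $\M$, and a semigroup linearization error between $(1-e^{-\mu_j t})/t$ and $\mu_j$. First, I would write $\mathscr{L}_{w_t} = t^{-1}(D_{w_t} - W_t)$ and use the standard kernel PCA duality to identify the non-zero eigenvalues of the Gram matrix $W_t$ with those of the empirical covariance operator $\hat\Sigma_t = n^{-1}\sum_{i=1}^n w_t(X_i,\cdot) \otimes w_t(X_i,\cdot)$ on the RKHS of $w_t$ restricted to $\M$. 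Simultaneously, the diagonal matrix $D_{w_t}$ concentrates around $\int_\M w_t(x,y)\,dy$, which equals $1$ plus a curvature correction of order $t$. Combining these two facts, Weyl's inequality reduces the bound on $|\lambda_j(\mathscr{L}_{w_t}) - \mu_j|$ to estimating $|t^{-1}(1-\lambda_j(\hat\Sigma_t)) - \mu_j|$ plus an additive $O(1)$ term (to be absorbed into the $(\mu_j^2\vee 1)t$ part after multiplication by $t$).

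Second, I would pass from the Gaussian to the heat kernel. Using the short-time asymptotic expansion of $k_t$ together with the geodesic-vs-Euclidean distance comparison $\|x-y\|_2^2 = d_\M(x,y)^2 + O(d_\M(x,y)^4)$ on submanifolds of $\R^p$, and a standard truncation at $d_\M(x,y)\lesssim \sqrt{t\log(e/t)}$ beyond which both kernels are negligible, one obtains a uniform multiplicative comparison $w_t = k_t\,(1 + O(t\log^2(e/t)))$. Carried through Weyl's inequality for $\hat\Sigma_t$ versus the heat-kernel empirical covariance $\hat C_t = n^{-1}\sum_i k_t(X_i,\cdot)\otimes k_t(X_i,\cdot)$, and combined with the fact that the $j$-th eigenvalue of $C_t$ is of order $e^{-\mu_j t}$, this produces the $\mu_j t\log^2(e/t)$ term after the $t^{-1}$ normalization.

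Third, the population covariance operator $C_t = \E[k_t(X,\cdot)\otimes k_t(X,\cdot)]$ on $\HH_t$ has eigenvalues exactly $e^{-\mu_j t}$, a direct consequence of the reproducing property applied to the Mercer expansion \eqref{eq:Mercer:heat:kernel}. Since $\|k_t(X,\cdot)\|^2_{\HH_t} = k_t(X,X) \leq C t^{-d/2}$ uniformly on $\M$ (classical uniform upper bound on the on-diagonal heat kernel for closed manifolds), a Bernstein-type inequality for empirical covariance operators in Hilbert space yields $\|\hat C_t - C_t\|_{\mathrm{op}} \leq C\sqrt{\log n/(n t^{d/2})}$ with high probability; after the $t^{-1}$ normalization this becomes the statistical term $\sqrt{\log n/(n t^{d/2+2})}$. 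Finally, the Taylor expansion $(1-e^{-\mu_j t})/t = \mu_j - \mu_j^2 t/2 + O(\mu_j^3 t^2)$ contributes the $\mu_j^2 t$ part, and combining with the earlier $O(t)$ error from $D_{w_t}$ yields the term $(\mu_j^2\vee 1)t$.

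The main obstacle will be the second step: producing a kernel-perturbation estimate whose effect on the $j$-th eigenvalue scales as $\mu_j t$ (possibly up to logarithmic factors) rather than as the uniform operator norm of $\hat\Sigma_t - \hat C_t$, which would give a useless $O(1)$ contribution after multiplication by $t^{-1}$. This requires a relative or $j$-sensitive perturbation bound, computed in a norm adapted to the heat semigroup—e.g., by measuring the kernel mismatch in a $(H+1)^{-1/2}$-weighted norm or equivalently in a Hilbert--Schmidt norm on $\HH_t\otimes\HH_t$—so that small eigenvalues of $C_t$ absorb correspondingly small contributions from the $w_t$-vs-$k_t$ discrepancy.
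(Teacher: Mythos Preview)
Your overall architecture---statistical concentration for the heat-kernel covariance, kernel approximation $w_t\approx k_t$, and semigroup linearization---matches the paper's, and you correctly put your finger on the hard step: getting the kernel-approximation contribution to scale like $\mu_j t\log^2(e/t)$ rather than $t^{-1}\|W_t-K_t\|_\infty\asymp \log^2(e/t)$. But there is a genuine gap in your Step~1, and the way the paper closes it also resolves the obstacle you flag in Step~2.

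The issue is the degree matrix. You propose to replace $D_{w_t}$ by $I_n$ (plus stochastic fluctuation) and absorb the curvature defect $\int_\M w_t(x,y)\,dy=1+O(t)$. After the $t^{-1}$ normalization this is an additive $O(1)$ error in $\lambda_j(\mathscr{L}_{w_t})$, and it \emph{cannot} be absorbed into $(\mu_j^2\vee 1)t$; for $j=1$ the target bound is $O(t)$, not $O(1)$. Handling $D_{w_t}$ and $W_t$ separately loses exactly the cancellation that makes the Laplacian small on near-constant vectors. (There is also a minor technical wrinkle: $\hat\Sigma_t$ lives on the $w_t$-RKHS and $\hat C_t$ on $\HH_t$, so ``Weyl for $\hat\Sigma_t$ versus $\hat C_t$'' is not literally meaningful; one must compare Gram matrices or pick a common space.)

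The paper's route avoids both problems simultaneously by doing the $w_t$-versus-$k_t$ comparison at the level of the full graph Laplacians, using the Dirichlet-form identity \eqref{eq:key:identity:graph:Laplacian}. The pointwise multiplicative bound $|w_t-k_t|\le Ck_t\,t\log^2(e/t)+Ct^K$ gives
\[
|\langle u,\mathscr{L}_{w_t}u\rangle-\langle u,\mathscr{L}_{k_t}u\rangle|\le Ct\log^2(\tfrac{e}{t})\,\langle u,\mathscr{L}_{k_t}u\rangle+Ct^K\|u\|_2^2,
\]
hence $\|\tilde{\mathscr{L}}_{k_t}^{-1/2}(\mathscr{L}_{w_t}-\mathscr{L}_{k_t})\tilde{\mathscr{L}}_{k_t}^{-1/2}\|_\infty\le Ct\log^2(e/t)$ with $\tilde{\mathscr{L}}_{k_t}=\mathscr{L}_{k_t}+t^K I_n$, and the Ostrowski-type relative Weyl bound yields $|\lambda_j(\mathscr{L}_{w_t})-\lambda_j(\mathscr{L}_{k_t})|\le C\lambda_j(\tilde{\mathscr{L}}_{k_t})\,t\log^2(e/t)\approx C\mu_j t\log^2(e/t)$. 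This is precisely your ``relative, $j$-sensitive'' bound, realized not through weighted covariance norms but through the quadratic-form structure of the Laplacian. One is then left with $\mathscr{L}_{k_t}=t^{-1}(D_{k_t}-K_t)$, and now the degree step is clean because $\int_\M k_t(x,y)\,dy=1$ exactly: $\|I_n-D_{k_t}\|_\infty$ is purely stochastic, of size $\sqrt{\log n/(nt^{d/2})}$. The remaining passage from $t^{-1}(I_n-K_t)$ to $\mu_j$ via heat-kernel PCA on $\HH_t$ is exactly your Steps~3--4.
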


For all $j\in\N$ let $$S_n\phi_j=\frac{1}{\sqrt{n}}(\phi_j(X_1),\dots,\phi_j(X_n))^\top\in\R^n$$ and let $O(j)$ be the set of all orthogonal $j\times j$ matrices. The next result deals with eigenspaces and provides an error bound for the reduced coordinate system in~\eqref{eq.DMs}.

\begin{corollary}\label{thm:thm2}
    Grant Assumption \ref{ass:manifold:hypothesis}. Then there are constants $c,C>0$ depending only on the constants in (\hyperlink{C1}{C1}), (\hyperlink{C3}{C3}), (\hyperlink{C4}{C4}) and (\hyperlink{C5}{C5}) such that the following holds. Let $n\geq 2$, $1\leq j\leq n$ and $t\in(0,1]$ be such that $\mu_{j+1}>\mu_j$, $\mu_{j+1}t\leq 1$ and $\frac{\log n}{nt^{d/2}}\leq c$. Then
    \begin{align*}
        \inf_{O\in O(j)}&\|(S_n\phi_1,\dots, S_n\phi_j)-(u_1(\mathscr{L}_{w_t}),\dots,u_j(\mathscr{L}_{w_t}))O\|_2\\
        &\leq C\sqrt{j}\Big(\frac{1}{\mu_{j+1}-\mu_j}\sqrt{\frac{\log n}{nt^{d/2+2}}}+\frac{\mu_{j+1}}{\mu_{j+1}-\mu_j}t\log^2(\tfrac{e}{t})+\mu_j t\Big).
    \end{align*}
    with high probability. Here, $\|\cdot\|_2$ denotes the Hilbert-Schmidt norm.
\end{corollary}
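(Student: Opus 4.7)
The plan is to adopt the kernel-PCA perspective announced in the introduction. Let $\mathcal{H}_t$ denote the RKHS of the heat kernel $k_t$, with feature map $\Phi_t(x)=k_t(x,\cdot)$. Define the evaluation operator $S_n\colon \mathcal{H}_t\to\R^n$ by $S_n f=n^{-1/2}(f(X_1),\dots,f(X_n))^\top$, so that the empirical covariance $\hat\Sigma_t = S_n^*S_n=n^{-1}\sum_i \Phi_t(X_i)\otimes\Phi_t(X_i)$ and the heat-kernel Gram matrix $G_t:=S_nS_n^*$, with entries $(G_t)_{ij}=n^{-1}k_t(X_i,X_j)$, share their nonzero spectrum. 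The population covariance is $\Sigma_t=\E[\Phi_t(X)\otimes\Phi_t(X)]=e^{-tH}|_{\mathcal{H}_t}$, whose unit-$\mathcal{H}_t$-norm eigenfunctions are $\tilde\phi_k=e^{-\mu_k t/2}\phi_k$ with eigenvalues $e^{-\mu_k t}$. Under the kernel-PCA isometry $v\mapsto S_n v/\sqrt\lambda$ between the eigenspace of $\hat\Sigma_t$ at $\lambda>0$ and the corresponding eigenspace of $G_t$, the ``ideal'' $\R^n$-eigenvector associated with $\tilde\phi_k$ is precisely $S_n\phi_k$, matching the left-hand side of the stated bound.

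The proof then carries out three reductions. First, I would compare $\mathscr{L}_{w_t}$ with $t^{-1}(I-G_t)$: writing $t\mathscr{L}_{w_t}=D_{w_t}-W_t=(I-G_t)+R$, the remainder $R$ collects the degree deviation $D_{w_t}-I$ and the entrywise Gaussian/heat-kernel difference $W_t-G_t$, both controlled by a deterministic pointwise comparison of the shape $|w_t(x,y)-k_t(x,y)|\le Ct\log^2(e/t)\,k_t(x,y)$ on $\M$, which follows from the short-time asymptotics of $k_t$ together with the manifold conditions (C1), (C3)--(C5). Second, I would apply the Davis-Kahan theorem in Hilbert-Schmidt norm to the pair $(\hat\Sigma_t,\Sigma_t)$ on $\mathcal{H}_t$. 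Under $\mu_{j+1}t\le 1$ the spectral gap satisfies $e^{-\mu_j t}-e^{-\mu_{j+1}t}\ge ct(\mu_{j+1}-\mu_j)$, while a Bernstein-type inequality for i.i.d.\ random operators on $\mathcal{H}_t$, using the ultracontractivity diagonal bound $\|\Phi_t(x)\|_{\mathcal{H}_t}^2=k_t(x,x)\le Ct^{-d/2}$, yields $\|\hat\Sigma_t-\Sigma_t\|_{\mathrm{op}}\le C\sqrt{\log n/(nt^{d/2})}$ with high probability; these combine into the leading term $\sqrt{j\log n/(nt^{d/2+2})}/(\mu_{j+1}-\mu_j)$. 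Third, I would transport the Hilbert-space projector bound back to $\R^n$ via $S_n$: up to the first-step remainder, the top-$j$ eigenspace of $\hat\Sigma_t$ is mapped to $\mathrm{span}(u_1(\mathscr{L}_{w_t}),\dots,u_j(\mathscr{L}_{w_t}))$, and the top-$j$ eigenspace of $\Sigma_t$ is mapped to $\mathrm{span}(S_n\phi_1,\dots,S_n\phi_j)$ up to a normalization mismatch $\|S_n\tilde\phi_k\|^2=e^{-\mu_k t}+\delta_k$ with $|\delta_k|$ of order $\mu_k t$, arising from exchanging the $\mathcal{H}_t$-norm for the empirical $L^2$-norm. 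Aggregating the three error sources and converting subspace-projector distance into the Procrustes distance $\inf_{O\in O(j)}\|A-BO\|_{HS}$ (which costs only universal constants and a $\sqrt{j}$ factor inherent to the Hilbert-Schmidt comparison of rank-$j$ projectors) then delivers the displayed bound.

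The main obstacle is the first reduction: obtaining the pointwise Gaussian/heat-kernel comparison on $\M$ with the precise $t\log^2(e/t)$ rate, and arranging the induced operator-norm remainder (together with the degree-matrix deviation) to fit naturally into the $\mu_{j+1}/(\mu_{j+1}-\mu_j)$ weighting dictated by Davis-Kahan at level $j$. This balances the discrepancy between the extrinsic Euclidean distance $\|x-y\|_2$ and the intrinsic geodesic distance on $\M$ against the Gaussian tail. Once this geometric-analytic ingredient is in hand, the Hilbert-space perturbation and Bernstein steps on $\mathcal{H}_t$ are essentially classical.
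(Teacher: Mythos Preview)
Your outline follows the paper's route closely: heat-kernel RKHS $\mathcal{H}_t$, operator Bernstein on $\hat\Sigma_t-\Sigma_t$, Davis--Kahan with the gap estimate $e^{-\mu_j t}-e^{-\mu_{j+1}t}\ge ct(\mu_{j+1}-\mu_j)$, the kernel trick to pass to the Gram matrix, and a pointwise comparison $|w_t-k_t|\le Ct\log^2(e/t)\,k_t+Ct^K$ from (C3)--(C5). The $\mu_j t$ term from the normalization mismatch $\|S_n\tilde\phi_k\|^2\approx e^{-\mu_k t}$ is also correctly identified.

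What you leave open is precisely what you flag as the obstacle, and the paper's resolution is worth naming explicitly because a naive attack fails. The entrywise bound $|w_t-k_t|\le Ct\log^2(e/t)\,k_t+Ct^K$ does \emph{not} give a useful absolute operator-norm bound on $\mathscr{L}_{w_t}-\mathscr{L}_{k_t}$ (any bound independent of $j$ cannot produce a $j$-dependent numerator). Instead one exploits the Dirichlet-form identity $\langle u,\mathscr{L}_{k}u\rangle=\tfrac{1}{2nt}\sum_{i,l}k(X_i,X_l)(u_i-u_l)^2$, valid for any symmetric kernel, to convert the entrywise comparison into
\[
|\langle u,(\mathscr{L}_{k_t}-\mathscr{L}_{w_t})u\rangle|\le Ct\log^2(\tfrac{e}{t})\,\langle u,\mathscr{L}_{k_t}u\rangle+Ct^{K}\|u\|_2^2,
\]
i.e.\ a \emph{relative} bound $\|\tilde{\mathscr{L}}_{k_t}^{-1/2}(\mathscr{L}_{w_t}-\mathscr{L}_{k_t})\tilde{\mathscr{L}}_{k_t}^{-1/2}\|_\infty\le Ct\log^2(e/t)$ with $\tilde{\mathscr{L}}_{k_t}=\mathscr{L}_{k_t}+t^K I_n$. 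This feeds into the multiplicative Davis--Kahan inequality
\[
\|P_{\le j}(A)-P_{\le j}(B)\|_2\le C\sqrt{j}\,\frac{\lambda_{j+1}(A)}{\lambda_{j+1}(A)-\lambda_j(A)}\,\|A^{-1/2}(B-A)A^{-1/2}\|_\infty,
\]
which is exactly how the factor $\mu_{j+1}/(\mu_{j+1}-\mu_j)$ arises.

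Two smaller organizational points. First, the paper splits your ``first reduction'' into a stochastic step bounding $\|I_n-D_{k_t}\|_\infty$ via Bernstein (clean because $\int_{\M} k_t(x,y)\,dy=1$ exactly) and a purely deterministic step comparing $\mathscr{L}_{k_t}$ with $\mathscr{L}_{w_t}$ via the Dirichlet-form argument; this way one never handles $D_{w_t}-I$ on its own. Second, the paper begins with a preliminary reduction: one may assume the right-hand side of the claimed bound is at most a small constant (otherwise the trivial estimate $\|S_n\phi_k\|_2\le C$, which itself uses the Bernstein bound on $\hat\Sigma_t-\Sigma_t$, already suffices). This guarantees that the empirical spectral gaps stay bounded below at every intermediate step, which is needed for Davis--Kahan to apply.
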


The proofs of Corollaries \ref{thm:thm1} and  \ref{thm:thm2} are based on classical (zero order) perturbation bounds (i.e.~the Weyl bound and the Davis-Kahan bound). On the one hand, this leads to non-asymptotic perturbation bounds that are valid under minimal assumptions and exhibit simple dependencies on the involved quantities ($d$, $j$, $\mu_j$, spectral gap, etc). For instance, Corollary~\ref{thm:thm1} holds without any spectral gap assumption. On the other hand, this leads to a stochastic error (i.e.~the term involving $n^{-1/2}t^{-d/4-1}$ up to a logarithmic term) that is suboptimal with respect to $t$. 

Compared to recent results in the literature, our dependence on $t$ is better than e.g.~in the bounds obtained in \cite{TGH}, but it is worse than in the bounds obtained in \cite{CW}. The latter provides a first order perturbation analysis, and establishes the stochastic errors (up to logarithmic terms) of the asymptotic order $n^{-1/2}t^{-d/4}$ and $n^{-1/2}t^{-d/4-1/2}$ for the eigenvalues and eigenfunctions, respectively. However, this requires stronger assumptions, and in \cite{CW} only a fixed number of leading eigenvalues (and eigenfunctions) are considered under the (spectral gap) assumption that they all have multiplicity~$1$. 

In a companion paper we will combine our approach (outlined in Section \ref{sec:proof:map}) with higher order perturbation expansions obtained in \cite{RW17,MR4517351,JW18b}. We conjecture that this will lead to an improved stochastic error term of the order $n^{-1/2}t^{-d/4}$ for both eigenvalues and eigenspaces (with an even better rate in the case of eigenvalues), but also leads to more restrictive assumptions on $d$, $j$, $\mu_j$, spectral gap, etc. In this paper, we formulate such an improvement only for heat kernel PCA and for the leading eigenvalue and the associated eigenspace (related to spectral clustering, see Theorem \ref{prop:improved:perturbation:bound:nullspace} below). This already requires more advanced techniques in the form of concentration inequalities (Lemmas \ref{lem:conc:eco:relative} and \ref{lem:conc:Hilbert:norm}), perturbation bounds (Lemmas \ref{lem:relative:DK} and \ref{lem:relative:Weyl}), and eigenvalues computations  (Lemma \ref{lem:eigenvalue:estimate}). Note that all these techniques are not used in the proofs of Corollaries \ref{thm:thm1} and \ref{thm:thm2}. 


\subsection{Previous and new proof map}\label{sec:proof:map}

A standard route in the analysis of the point cloud Laplacian under the manifold assumption (see, e.g.,~\cite{Belkin2006ConvergenceOL,GK,MR2238670, TGH,CW}) is described by the following error decomposition 
\begin{align*}
  \xymatrix{
      \dfrac {I-e^{-tH}}{t}\text{ on }L^2(\mathcal{M}) \quad\ar@{-}[r]^-{(1)} & \quad \mathbb{E} \mathscr{L}_{w_t}\text{ on }L^2(\mathcal{M} )\text{ or }\mathcal{C}(\mathcal{M}) \ar@{-}[d]_{(2)} \\ & \quad \mathscr{L}_{w_t}\text{ on }\mathcal{C}(\mathcal{M})
  }
\end{align*}
Here, (1) corresponds to an approximation error and (2) corresponds to a stochastic error. In the approximation error, the heat kernel $k_t$ is replaced by a Gaussian kernel $w_t$, leading to the operator
\begin{align*}
    \mathbb{E}\mathscr{L}_{w_t}f(x)=\frac{1}{t}\int_{\mathcal{M}}(f(x)-f(y))w_t(x,y)dy,\qquad f\in L^2(\mathcal{M}),x\in\mathcal{M}.
\end{align*}
The approximation error tends to decrease when $t$ decreases. It cannot be controlled uniformly over $L^2(\mathcal{M})$, but only over proper classes of smooth functions (see, e.g., Theorem 3.1 in \cite{GK}, Theorem 5.1 in \cite{CW} and \cite{MR2238670} for pointwise error bounds and Theorem 3.2 in \cite{CW} and Section 3 in \cite{TGH} for Dirichlet form error bounds). In the stochastic error, the uniform distribution on $\M$ is replaced by by the empirical distribution, leading to
\begin{align*}
    \mathscr{L}_{w_t}f(x)=\frac{1}{t}\frac{1}{n}\sum_{i=1}^n(f(x)-f(X_i))w_t(x,X_i),\qquad f\in \mathcal{C}(\mathcal{M}),x\in\mathcal{M},
\end{align*}
where $\mathcal{C}(\mathcal{M})$ denotes the space of all continuous functions on $\mathcal{M}$. The stochastic error tends to increase when $n$ increases. Again, for certain proper classes of functions, it can be controlled using concentration inequalities and empirical process theory (see, e.g., \cite{GK} and \cite{Belkin2006ConvergenceOL}).

The approach of this paper reverses the order of approximation error and stochastic error and is described by the following error decomposition
\begin{align*}
  \xymatrix{
      \dfrac {I-\Sigma_t}{t}=\dfrac{I-e^{-tH}}{t}\text{ on }\mathcal{H}_t\ar@{-}[d]_-{(1)}       \\
     \quad \dfrac{I-\hat\Sigma_t}{t}\text{ on }\mathcal{H}_t\ \quad\ar@{-}[r]^-{(2)} &  \quad\dfrac{I_n-K_t}{t}\quad  \ar@{-}[r]^-{(3)}  & \quad\mathscr{L}_{k_t}\quad \ar@{-}[r]^-{(4)} & \quad \mathscr{L}_{w_t}
  }
\end{align*}
Here, (1) corresponds to a stochastic error, (2) corresponds to the kernel trick, (3) is another stochastic error and (4) is an approximation error. The first main step will be to realize $e^{-tH}$ as the covariance operator $\Sigma_t$ of the embedded data point $k_t(X,\cdot)$ taking values in the reproducing kernel Hilbert space (RKHS) $\mathcal{H}_t$ induced by $k_t$ (Section \ref{sec:Heat:kernel:PCA}). The new stochastic error consists now in estimating $\Sigma_t$ using the empirical covariance operator $\hat\Sigma_t$ on $\HH$ based on the embedded data points $k_t(X_i,\cdot)\in \HH_t$, $1\leq i\leq n$. This error can be controlled in operator norm (which makes it easy to apply perturbation bounds for eigenvalues and eigenspaces) using concentration inequalities for random self-adjoint operators (Section \ref{sec:perturbation:Heat:kernel:PCA}). Together with the kernel trick in (2), this allows to relate the spectral characteristics of $\frac{I-e^{-tH}}{t}$ to those of  $\frac{I_n-K_t}{t}$, where $K_t\in\R^{n\times n}$ is the kernel matrix defined by $(K_t)_{ij}=n^{-1}k_t(X_i,X_j)$. Finally, the stochastic error in (3) and the approximation error in (4) can again be controlled in operator norm, using concentration inequalities and asymptotic expansions of the heat kernel (Section \ref{subsec:approx:error}).      

\paragraph{Further basic notation.}
Given a bounded (resp.~Hilbert-Schmidt) operator $A$ on a Hilbert space $\mathcal{H}$, we denote the operator norm (resp.~the Hilbert-Schmidt norm) of $A$ by $\|A\|_\infty$ (resp.~$\|A\|_2$). Given a trace class operator $A$ on $\mathcal{H}$, we denote the trace of $A$ by $\operatorname{tr}(A)$. For $g,h\in\mathcal{H}$, we denote by $g\otimes h$ the rank-one operator on $\mathcal{H}$ defined by $(g\otimes h)x=\langle h,x\rangle g$, $x\in \mathcal{H}$.  Throughout the paper, $c\in (0,1)$ and $C>1$ denote constants that may change from line to line (by a numerical value). If no further dependencies are mentioned, then these constants are absolute.

\section{Heat kernel principal component analysis}\label{Sec:Analysis:LMAP}

Throughout Section \ref{Sec:Analysis:LMAP}, we suppose that Assumption \ref{ass:manifold:hypothesis} is satisfied.

\subsection{The heat kernel as reproducing kernel}

The representation in \eqref{eq:Mercer:heat:kernel} implies that $k_t$ is a symmetric and positive semi-definite kernel function in the sense of Definition 12.6 in \cite{W}. Hence, $k_t$ induces the reproducing kernel Hilbert space (RKHS) (see, e.g., Corollary 12.26 in \cite{W})
\begin{align*}
    \HH_t=\Big\{f=\sum_{j=1}^\infty\alpha_j\phi_j\ :\ \sum_{j=1}^\infty\alpha_j^2<\infty\text{ and }\sum_{j=1}^\infty e^{\mu_jt}\alpha_j^2<\infty\Big\}
\end{align*}
with the inner product 
\begin{align*}
    \langle f,g\rangle_{\HH_t}=\sum_{j=1}^\infty e^{\mu_jt}\alpha_j\beta_j,\qquad f=\sum_{j=1}^\infty\alpha_j\phi_j, g=\sum_{j=1}^\infty\beta_j\phi_j\in\HH_t. 
\end{align*}
In particular, an orthonormal basis of $\HH_t$ is given by $$u_{t,j}=e^{-\mu_jt/2}\phi_j,\qquad j\in\N.$$
Alternatively, we have 
\begin{align*}
    \HH_t=e^{-(t/2)H}L^2(\M)
\end{align*}
with 
\begin{align*}
    \langle f,g\rangle_{\HH_t}=\langle e^{(t/2)H}f,e^{(t/2)H}g\rangle_{L^2(\M)}
    \end{align*}
for $f,g\in\HH_t$. In particular, for all $t>0$ and all $x\in\M$, we have  
\begin{align*}
    k_t(x,\cdot)\in \HH_t
\end{align*}
and the reproducing property  
\begin{align*}
 f(x)=\langle f,k_t(x,\cdot)\rangle_{\HH_t}
\end{align*}
holds. Setting $f=k_t(y,\cdot)$, we get
\begin{align}\label{eq:kernel:trick}
    k_t(x,y)=\langle k_t(y,\cdot),k_t(x,\cdot)\rangle_{\HH_t}
\end{align}
for all $t>0$ and all $x,y\in\HH_t$. Hence, geometric computations for the functions $k_t(x,\cdot)$, $x\in\M$ in the infinite-dimensional Hilbert space $\HH_t$ can be carried out using only evaluations of the kernel function.

\subsection{Heat kernel PCA}\label{sec:Heat:kernel:PCA}

Since $k_t$ is a positive semi-definite kernel function with RKHS $\HH_t$, we can consider the reproducing kernel feature map (see, e.g., \cite{6790375,MR2450103})
\begin{align*}
    \Phi_t:\M\rightarrow\HH_t,\ x\mapsto k_t(x,\cdot).
\end{align*}
This map is similar to the (population) diffusion map (see, e.g., \cite{CL}) with the difference that the range is $\HH_t$ instead of the larger space $L^2(\M)$. This feature map yields the embedded random variables 
\begin{align*}
    k_t(X,\cdot), k_t(X_1,\cdot),\dots,k_t(X_n,\cdot),
\end{align*}
taking values in the Hilbert space $\HH_t$. The random variable $k_t(X,\cdot)$ has expectation $\mathbf{1}_{\M}:\M\rightarrow\R,\ x\mapsto 1$. Moreover, by \eqref{eq:Mercer:heat:kernel}, it has the Karhunen-Loève (KL) representation 
\begin{align}\label{eq:KL:representation}
    k_t(X,\cdot)=\sum_{j=1}^\infty e^{-\mu_jt/2}\phi_j(X)u_{t,j}
\end{align}
with KL coefficients $\phi_j(X)$ satisfying $\E\phi_j(X)\phi_k(X)=0$ for all $j\neq k$ and $\E\phi_j^2(X)=1$ for all $j\in\N$. By \eqref{eq:KL:representation}, we have
\begin{align*}
    \E\|k_t(X,\cdot)\|_{\HH_t}^2=\sum_{j=1}^\infty e^{-\mu_jt}<\infty,
\end{align*}
meaning that $k_t(X,\cdot)$ is strongly square-integrable. Hence we can define the (unnormalized) covariance operator
\begin{align*}
    \Sigma_t=\E k_t(X,\cdot)\otimes k_t(X,\cdot),
\end{align*}
which is a positive self-adjoint trace class operator on $\HH_t$ (see, e.g.,  Theorem 7.2.5 in \cite{HE15} or \cite{JW18b}). The following lemma collects some basic properties of $k_t(X,\cdot)$.
\begin{lemma}\label{lem:embedded:rv}
    The random variable $k_t(X,\cdot)$ is strongly square-integrable with ovariance operator $\Sigma_t=\E k_t(X,\cdot)\otimes k_t(X,\cdot)$ satisfying
    \begin{align*}
        \Sigma_t=\sum_{j=1}^\infty e^{-\mu_jt}u_{t,j}\otimes u_{t,j}=e^{-tH}|_{\HH_t}.
    \end{align*}
    In particular, $\Sigma_t$ is a trace-class operator with
    \begin{align*}
        \operatorname{tr}(\Sigma_t)=\sum_{j=1}^\infty e^{-\mu_jt}.
    \end{align*}
\end{lemma}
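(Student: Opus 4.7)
The strong square-integrability $\E\|k_t(X,\cdot)\|_{\HH_t}^2<\infty$ is already verified in the paragraph preceding the lemma via the Karhunen-Loève expansion \eqref{eq:KL:representation} together with the summability in \eqref{eq:spectral:thm}; this in turn legitimizes $\Sigma_t$ as a Bochner integral of Hilbert-Schmidt operators on $\HH_t$. My plan is to read off both the spectral decomposition and the identification $\Sigma_t=e^{-tH}|_{\HH_t}$ from a single computation of the sesquilinear form $\langle g,\Sigma_t f\rangle_{\HH_t}$.

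The key identity is the following: for arbitrary $f,g\in\HH_t$, the definition of $\Sigma_t$, the linearity and continuity of the $\HH_t$-inner product, and two applications of the reproducing property \eqref{eq:kernel:trick} yield
\begin{align*}
\langle g,\Sigma_t f\rangle_{\HH_t}=\E\bigl[\langle g,k_t(X,\cdot)\rangle_{\HH_t}\,\langle k_t(X,\cdot),f\rangle_{\HH_t}\bigr]=\E[g(X)f(X)]=\langle g,f\rangle_{L^2(\M)},
\end{align*}
where the last step uses that $X$ is uniform on $\M$ with $\operatorname{vol}(\M)=1$. The interchange of $\HH_t$-inner product with expectation is the only place where Bochner integrability (equivalently, strong square-integrability) enters; it is routine and, in my view, the only minor technicality in the whole argument.

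From this identity everything drops out. Testing against the $\HH_t$-orthonormal basis $u_{t,j}=e^{-\mu_j t/2}\phi_j$ and using $\langle \phi_j,\phi_k\rangle_{L^2(\M)}=\delta_{jk}$ gives $\langle u_{t,j},\Sigma_t u_{t,k}\rangle_{\HH_t}=e^{-\mu_j t}\delta_{jk}$, which yields the spectral decomposition $\Sigma_t=\sum_{j=1}^\infty e^{-\mu_j t}u_{t,j}\otimes u_{t,j}$ in $\HH_t$; summing the diagonal and invoking \eqref{eq:spectral:thm} establishes trace-class membership and the trace formula. To identify $\Sigma_t$ with $e^{-tH}|_{\HH_t}$, I would simply observe that the spectral theorem for $H$ applied to $\phi_j$ gives $e^{-tH}u_{t,j}=e^{-\mu_j t}u_{t,j}$, so both operators agree on a common orthonormal basis of $\HH_t$ and hence coincide; alternatively, rewriting $\langle g,\Sigma_t f\rangle_{\HH_t}=\langle g,f\rangle_{L^2(\M)}$ using $\langle \cdot,\cdot\rangle_{\HH_t}=\langle e^{(t/2)H}\cdot,e^{(t/2)H}\cdot\rangle_{L^2(\M)}$ and self-adjointness of the heat semigroup produces the same conclusion in one line.
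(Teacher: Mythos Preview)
Your argument is correct and lands in the same place as the paper's proof, which simply says that the spectral representation of $\Sigma_t$ follows from the KL expansion \eqref{eq:KL:representation} and then checks $\Sigma_t u_{t,j}=e^{-\mu_j t}u_{t,j}=e^{-tH}u_{t,j}$ on the basis. The only genuine difference is your entry point: instead of expanding $k_t(X,\cdot)\otimes k_t(X,\cdot)$ termwise via \eqref{eq:KL:representation} and using $\E[\phi_j(X)\phi_k(X)]=\delta_{jk}$, you go through the reproducing property to obtain the clean identity $\langle g,\Sigma_t f\rangle_{\HH_t}=\langle g,f\rangle_{L^2(\M)}$ and then specialize to $f=u_{t,k}$, $g=u_{t,j}$. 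This is a pleasant and slightly more conceptual formulation (it makes transparent that $\Sigma_t$ is the natural embedding $\HH_t\hookrightarrow L^2(\M)$ composed with its adjoint), but the underlying computation is equivalent to the KL route and the identification with $e^{-tH}|_{\HH_t}$ is done exactly as in the paper.
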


\begin{proof}
    The spectral respresentation of $\Sigma_t$ is a consequence of \eqref{eq:KL:representation}. For the second equality it suffices to note that $\Sigma_tu_{t,j}=e^{-\mu_jt}u_{t,j}=e^{-tH}u_{t,j}$ for all $j\in\N$. 
\end{proof}

Lemma \ref{lem:embedded:rv} shows that the spectral characteristics of the heat semigroup can be inferred from the covariance operator of $k_t(X,\cdot)$. In particular, the eigenvalues of $\Sigma_t$ are given by $e^{-\mu_1t}\geq e^{-\mu_2t}\geq \dots >0$ (we also often write $\lambda_{t,j}=e^{-\mu_j t}$) and corresponding eigenvectors are given by $u_{t,1},u_{t,2},\dots$. For $j\in\N$, let
\begin{align*}
    P_{t,\leq j}=\sum_{k=1}^jP_{t,k},\qquad P_{t,k}=u_{t,k}\otimes u_{t,k}
\end{align*}
be the orthogonal projection onto the eigenspace spanned by $u_{t,1},\dots,u_{t,j}$. 

Let 
\begin{align*}
    \hat\Sigma_t=\frac{1}{n}\sum_{i=1}^nk_t(X_i,\cdot)\otimes k_t(X_i,\cdot)
\end{align*}
be the empirical covariance operator, which is a positive self-adjoint finite-rank operator on $\HH_t$. In particular, there exists a sequence $\hat\lambda_{t,1}\geq \hat\lambda_{t,2}\geq \cdots \geq 0$ of non-negative eigenvalues together with an orthonormal basis of eigenvectors $\hat u_{t,1},\hat u_{t,2},\dots$ in $ \HH_t$ such that 
\begin{align*}
    \hat\Sigma_t=\sum_{k=1}^\infty\hat\lambda_{t,k}\hat u_{t,k}\otimes \hat u_{t,k}.
\end{align*}
For $j\in\N$, let
\begin{align*}
    \hat P_{t,\leq j}=\sum_{k=1}^j\hat P_{t,k},\qquad \hat P_{t,k}=\hat u_{t,k}\otimes \hat u_{t,k}
\end{align*}
be the orthogonal projection onto the eigenspace spanned by $\hat u_{t,1},\dots,\hat u_{t,j}$. 

A related matrix is the kernel matrix $K_t\in\R^{n\times n}$ defined by
\begin{align*}
    (K_t)_{ij}=\frac{1}{n}\langle k_t(X_i,\cdot),k_t(X_j,\cdot)\rangle_{\HH_t}=\frac{1}{n}k_t(X_i,X_j).
\end{align*}
The so-called kernel trick states that $\hat \Sigma_t$ and $K_t$ have the same non-zero eigenvalues and that the so called principal components (see, e.g., \cite{MR3154467}) can be computed from $K_t$. To formulate this, let
\begin{align*}
    S_n:\HH_t\rightarrow \R^n,\ f\mapsto \frac{1}{\sqrt{n}}(f(X_1),\dots,f(X_n))^\top
\end{align*}
be the (normalized) sampling operator. Then there exists an orthonormal basis $\hat v_{t,1},\dots,\hat v_{t,n}$ of $\R^n$ such that (see, e.g., \cite{CW})
\begin{align*}
    K_n=\sum_{j=1}^n\hat\lambda_{t,j}\hat v_{t,j}\hat v_{t,j}^\top
\end{align*}
with principal components satisfying
\begin{align}\label{eq:PC:formula}
    \hat\lambda_{t,j}^{1/2}\hat v_{t,j}=S_n\hat u_{t,j}\in\R^n.
\end{align}

The qualitative properties in Lemma \ref{lem:embedded:rv} alone are not sufficient for a statistical analysis of heat kernel PCA. For this we need some additional quantitative properties of $k_t$. For better reference, we will formulate these properties as conditions with explicit constants.
\begin{description}
    \item[\hypertarget{C1}{(C1)}] There exists a constant $C_1>0$ such that 
    \begin{align*}
        k_t(x,x)\leq C_1t^{-d/2}
    \end{align*}
    for all $t\in(0,1]$ and all $x\in\M$.
\end{description}
Condition (\hyperlink{C1}{C1}) is satisfied under Assumption \ref{ass:manifold:hypothesis}, as follows from Theorem 7.9 in \cite{G}. It has many consequences. For instance, \eqref{eq:Mercer:heat:kernel} and (\hyperlink{C1}{C1}) yield (after integrating over $\M$)
 \begin{align*}
     je^{-\mu_jt}\leq \sum_{k=1}^\infty e^{-\mu_kt}\leq C_1t^{-d/2}
 \end{align*}
 and setting $t=\mu_j^{-1}$ with $j>m$ leads to 
 \begin{align}\label{eq:Weyls:law:lower:bound}
     \mu_j\geq c_1j^{2/d}
 \end{align}
 for all $j>m$, with a constant $c_1>0$ depending only on $C_1$ (see also Example 10.16 in \cite{G} and Remark 3 in \cite{Chavel}).
\begin{description}
    \item[\hypertarget{C2}{(C2)}] There exists a constant $C_2>0$ such that 
    \begin{align*}
       \sum_{k=1}^j\phi_k^2(x)\leq C_2j
    \end{align*}
    for all $x\in\M$ and all $j\in\N$.
\end{description}
Condition (\hyperlink{C2}{C2}) says that the orthonormal basis $\phi_1,\phi_2,\dots$ of $L^2(\M)$ is bounded on average. It holds under Assumption \ref{ass:manifold:hypothesis}, as shown in \cite{MR31145}. It is again closely related to (\hyperlink{C1}{C1}) and an upper bound for the eigenvalues. Indeed, by Weyl's formula there exists a constant $C>0$ such that $\mu_j\leq Cj^{2/d}$ for all $j\in \N$. Thus, \eqref{eq:Mercer:heat:kernel} and (\hyperlink{C1}{C1}) yield for $t=j^{-2/d}$ that 
\begin{align*}
    e^{-C}\sum_{k=1}^j\phi_k^2(x)\leq \sum_{k=1}^\infty e^{-\mu_kt}\phi_k^2(x)\leq C_1t^{-d/2}=C_1j.
\end{align*}
Finally, we state two simple consequences of (\hyperlink{C1}{C1}) and (\hyperlink{C2}{C2}).
\begin{lemma}
    If (\hyperlink{C1}{C1}) holds, then 
    \begin{align*}
        \|k_t(X,\cdot)\|_{\HH_t}^2\leq C_1t^{-d/2}\qquad \text{a.s.},
    \end{align*}
    where $C_1$ is the constant in (\hyperlink{C1}{C1}).
\end{lemma}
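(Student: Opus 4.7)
The plan is to derive this almost surely bound pointwise on $\mathcal{M}$ from the reproducing property together with condition (\hyperlink{C1}{C1}); since $X \in \M$ almost surely, the pointwise bound then transfers to the random variable $k_t(X,\cdot)$.

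First I would apply the reproducing property $f(x)=\langle f, k_t(x,\cdot)\rangle_{\HH_t}$ (derived earlier in the excerpt) with the particular choice $f = k_t(x,\cdot) \in \HH_t$. This immediately yields the identity
\begin{align*}
    \|k_t(x,\cdot)\|_{\HH_t}^2 = \langle k_t(x,\cdot), k_t(x,\cdot)\rangle_{\HH_t} = k_t(x,x)
\end{align*}
for every $x \in \M$, which is exactly the kernel trick identity \eqref{eq:kernel:trick} specialized to $y=x$.

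Next, I would invoke condition (\hyperlink{C1}{C1}), which bounds $k_t(x,x) \leq C_1 t^{-d/2}$ uniformly in $x\in\M$ for $t\in(0,1]$. Combining this with the previous identity gives the deterministic pointwise bound $\|k_t(x,\cdot)\|_{\HH_t}^2 \leq C_1 t^{-d/2}$ for every $x\in\M$. Under Assumption \ref{ass:manifold:hypothesis}, $X$ takes values in $\M$ almost surely, so evaluating at $x = X$ and using that the bound is uniform in $x$ yields the claim almost surely.

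There is no real obstacle here — the statement is essentially a restatement of (\hyperlink{C1}{C1}) through the RKHS/kernel-trick dictionary. The only thing one needs to be mindful of is to clearly invoke the reproducing property on the correct space $\HH_t$ (where $k_t(x,\cdot)$ lives) and to note the measurability of $x\mapsto k_t(x,x)$, which is immediate from the Mercer expansion \eqref{eq:Mercer:heat:kernel}.
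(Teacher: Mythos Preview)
Your proof is correct and essentially identical to the paper's own argument: both use the reproducing property \eqref{eq:kernel:trick} with $y=x$ to get $\|k_t(X,\cdot)\|_{\HH_t}^2=k_t(X,X)$ and then invoke (\hyperlink{C1}{C1}).
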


\begin{proof}
    By \eqref{eq:kernel:trick} we have
    \begin{align*}
        \|k_t(X,\cdot)\|_{\HH_t}^2=\langle k_t(X,\cdot),k_t(X,\cdot)\rangle_{\HH_t}=k_t(X,X),
    \end{align*}
    Hence, the claim follows from inserting (\hyperlink{C1}{C1}).
\end{proof}
Hence under (\hyperlink{C1}{C1}), $k_t(X,\cdot)$ is a bounded random variable, which is already sufficient to start the analysis of heat kernel PCA (see, e.g., \cite{MR2600634,MR4647368}). The next lemma shows that $k_t(X,\cdot)$ has a more subtle probabilistic structure if additionally (\hyperlink{C2}{C2}) is fulfilled. This will allow us to apply more refined concentration techniques.

\begin{lemma}
    If (\hyperlink{C1}{C1}) and (\hyperlink{C2}{C2}) hold, then $k_t(X,\cdot)$ has the KL representation
    \begin{align*}
    k_t(X,\cdot)=\sum_{j=1}^\infty e^{-\mu_jt/2}\phi_j(X)u_{t,j}
    \end{align*}
    such that for all $j>m$ it holds that
    \begin{align*}
        \mu_j\geq cj^{2/d} \qquad\text{and} \qquad\sum_{k=1}^j\phi_k^2(X)\leq Cj\quad \text{ almost surely.}
    \end{align*}
\end{lemma}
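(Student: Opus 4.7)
The statement is essentially a bookkeeping consolidation of the three facts established earlier in the section, so my plan is simply to cite them in the right order rather than prove anything genuinely new.

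First, I would recall the Karhunen–Loève representation. The Mercer expansion \eqref{eq:Mercer:heat:kernel} gives $k_t(x,y)=\sum_{j=1}^\infty e^{-\mu_j t}\phi_j(x)\phi_j(y)$, and in terms of the RKHS basis $u_{t,j}=e^{-\mu_j t/2}\phi_j$ this rewrites as $k_t(x,\cdot)=\sum_{j=1}^\infty e^{-\mu_j t/2}\phi_j(x)u_{t,j}$ in $\HH_t$. Specializing $x=X$ gives the claimed identity \eqref{eq:KL:representation}. The fact that $\E\phi_j(X)\phi_k(X)=\delta_{jk}$ (because $X$ is uniform on $\M$ of unit volume and $\{\phi_j\}$ is an orthonormal basis of $L^2(\M)$) was already noted after \eqref{eq:KL:representation} and justifies calling this a KL representation.

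Second, I would quote the Weyl-type lower bound on eigenvalues. Condition (\hyperlink{C1}{C1}) has already been shown to imply the inequality \eqref{eq:Weyls:law:lower:bound}, namely $\mu_j\geq c_1 j^{2/d}$ for all $j>m$, via the chain $j e^{-\mu_j t}\leq \sum_k e^{-\mu_k t}\leq C_1 t^{-d/2}$ and the choice $t=\mu_j^{-1}$. This gives the first inequality in the lemma with the constant depending only on $C_1$.

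Third, for the bound on eigenfunction sums, condition (\hyperlink{C2}{C2}) states $\sum_{k=1}^j \phi_k^2(x)\leq C_2 j$ for \emph{every} $x\in\M$ and every $j\in\N$. Since $X$ takes values in $\M$ almost surely under Assumption \ref{ass:manifold:hypothesis}, substituting $x=X$ yields $\sum_{k=1}^j \phi_k^2(X)\leq C_2 j$ almost surely, which is the second claimed inequality.

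Since each of the three components has already been developed in the preceding discussion, there is no real obstacle here; the only thing to be a little careful about is that the almost-sure statement in the last display is obtained from the deterministic pointwise bound in (\hyperlink{C2}{C2}) by measurability of $X$ and $X\in\M$ a.s., and that the $j>m$ restriction applies only to the $\mu_j$ lower bound (the eigenfunction inequality holds for all $j\in\N$). No further computation is needed.
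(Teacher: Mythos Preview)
Your proposal is correct and matches the paper's own proof, which simply cites \eqref{eq:KL:representation}, \eqref{eq:Weyls:law:lower:bound} and (\hyperlink{C2}{C2}) in one line. Your expanded write-up makes the same three citations explicit, with no substantive difference in approach.
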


\begin{proof}
Follows from \eqref{eq:KL:representation}, \eqref{eq:Weyls:law:lower:bound} and (\hyperlink{C2}{C2}).
\end{proof}

\subsection{Perturbation bounds for heat kernel PCA}\label{sec:perturbation:Heat:kernel:PCA}

In this section, we derive perturbation bounds for the eigenvalues $\hat\lambda_{t,j}$ and the spectral projectors $ \hat P_{t,\leq j}$
of the empirical covariance operator $\hat\Sigma_t$, considered as spectral approximations of the eigenvalues $\lambda_{t,j}=e^{-\mu_jt}$ and the spectral projectors $ P_{t,\leq j}$ of the covariance operator $\Sigma_t$. We apply classical absolute perturbation bounds, based on the perturbation bounds by Weyl and Davis-Kahan, and we will also show how to obtain more refined bounds, by using relative perturbation bound from  \cite{RW17,MR4517351,JW18b}. In the latter case we restrict ourselves to the nullspace. 

\begin{lemma}\label{lem:conc:eco:absolute}
    Suppose that (\hyperlink{C1}{C1}) is satisfied. Then, with probability at least $1-Ct^{-d/2}e^{-y}$, $y>0$, we have
    \begin{align*}
        \|\hat\Sigma_{t}-\Sigma_t\|_{\infty}\leq C\Big(\sqrt{\frac{y}{nt^{d/2}}}+\frac{y}{nt^{d/2}}\Big)
    \end{align*}
    with constant $C>0$ depending only on $C_1$. 
\end{lemma}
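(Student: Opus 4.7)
The plan is to recognize this as a standard non-commutative (operator) Bernstein inequality applied to the i.i.d.\ sum
$$\hat\Sigma_t-\Sigma_t=\frac{1}{n}\sum_{i=1}^n Z_i,\qquad Z_i:=k_t(X_i,\cdot)\otimes k_t(X_i,\cdot)-\Sigma_t,$$
where the $Z_i$ are centered self-adjoint random operators on $\HH_t$. The key inputs are the almost sure bound $\|k_t(X,\cdot)\|_{\HH_t}^2\le C_1 t^{-d/2}$ (the preceding lemma, which is a direct consequence of the reproducing property and (\hyperlink{C1}{C1})) and the spectral structure of $\Sigma_t$ from Lemma~\ref{lem:embedded:rv}.

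First, I would collect the two Bernstein parameters. For the uniform bound, the reproducing property gives $\|k_t(X,\cdot)\otimes k_t(X,\cdot)\|_\infty=\|k_t(X,\cdot)\|_{\HH_t}^2\le C_1 t^{-d/2}$, while Lemma~\ref{lem:embedded:rv} yields $\|\Sigma_t\|_\infty=e^{-\mu_1 t}=1$, so $\|Z_i\|_\infty\le R:=2C_1 t^{-d/2}$. For the variance, a pointwise computation
$$\bigl(k_t(X,\cdot)\otimes k_t(X,\cdot)\bigr)^2=\|k_t(X,\cdot)\|_{\HH_t}^2\,k_t(X,\cdot)\otimes k_t(X,\cdot)\preceq C_1 t^{-d/2}\,k_t(X,\cdot)\otimes k_t(X,\cdot)$$
(in the sense of self-adjoint operators) gives, after taking expectation and subtracting $\Sigma_t^2\succeq 0$,
$$\E Z_i^2\preceq C_1 t^{-d/2}\,\Sigma_t.$$
Setting $V:=nC_1 t^{-d/2}\Sigma_t$, we thus have $\sum_{i=1}^n\E Z_i^2\preceq V$, with $\|V\|_\infty\le nC_1 t^{-d/2}$ and, by Lemma~\ref{lem:embedded:rv}, $\operatorname{tr}(V)\le n(C_1 t^{-d/2})^2$.

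Now I would apply the intrinsic-dimension version of the operator Bernstein inequality (Tropp; see also the references used in \cite{RW17,MR4647368}), valid on separable Hilbert spaces since all operators involved are trace-class or finite-rank:
$$\PP\Big(\Big\|\sum_{i=1}^n Z_i\Big\|_\infty\ge \tau\Big)\le 4\,\frac{\operatorname{tr}(V)}{\|V\|_\infty}\,\exp\Big(-\frac{\tau^2/2}{\|V\|_\infty+R\tau/3}\Big).$$
Here the prefactor satisfies $\operatorname{tr}(V)/\|V\|_\infty\le C_1 t^{-d/2}$, explaining the $t^{-d/2}$ factor in the probability bound. Choosing $\tau=n\varepsilon$ with
$$\varepsilon=C\Big(\sqrt{\tfrac{y}{nt^{d/2}}}+\tfrac{y}{nt^{d/2}}\Big),$$
a short arithmetic check on the exponent (splitting into the sub-Gaussian regime $\|V\|_\infty\ge R\tau/3$ and the sub-exponential regime) shows the exponent dominates $y$, yielding the claimed tail bound with probability at least $1-Ct^{-d/2}e^{-y}$.

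The only non-routine step is making sure the intrinsic-dimension Bernstein bound applies verbatim to self-adjoint trace-class perturbations on the infinite-dimensional RKHS $\HH_t$; this is by now standard and follows either by reducing to the finite-dimensional subspace generated by the $Z_i$ together with an approximation argument for $\Sigma_t$, or by appealing directly to the Hilbert-space formulations available in the literature cited in the paper. Everything else is parameter computation.
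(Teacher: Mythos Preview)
Your proposal is correct and follows essentially the same route as the paper: write $\hat\Sigma_t-\Sigma_t$ as an i.i.d.\ sum of centered self-adjoint operators, bound the uniform norm and the variance proxy via the reproducing property and (\hyperlink{C1}{C1}), and apply the intrinsic-dimension operator Bernstein inequality (the paper's Lemma~\ref{lem:operator:Bernstein}). The only cosmetic difference is that you package the variance estimate as the operator inequality $\E Z_i^2\preceq C_1t^{-d/2}\Sigma_t$ and read off $\|V\|_\infty$ and $\operatorname{tr}(V)$ from it, while the paper bounds $\|\E\xi^2\|_\infty$ and $\operatorname{tr}(\E\xi^2)$ directly; the resulting parameters and the final choice of deviation level coincide.
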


\begin{proof}
    Let $\xi=k_t(X,\cdot)\otimes k_t(X,\cdot)-\Sigma_t$ and  $\xi_i=k_t(X_i,\cdot)\otimes k_t(X_i,\cdot)-\Sigma_t$ for $1\leq i\leq n$. Then we have
    \begin{align*}
        \hat\Sigma_{t}-\Sigma_t=\frac{1}{n}\sum_{i=1}^n\xi_i,
    \end{align*}
and our goal is to apply the operator Bernstein inequality stated in Appendix \ref{app:concentration:inequalities}. First,
\begin{align*}
    \|\xi\|_\infty &\leq \|k_t(X,\cdot)\otimes k_t(X,\cdot)\|_\infty+\E\|k_t(X,\cdot)\otimes k_t(X,\cdot)\|_\infty\\
    & =  \|k_t(X,\cdot)\|_{\HH_t}^2+\E \|k_t(X,\cdot)\|_{\HH_t}^2\\
    &=k_t(X,X)+\E k_t(X,X)\\
    &\leq 2C_1t^{-d/2}
\end{align*}
almost surely, where we successively applied the triangle inequality, Jensen's inequality, the reproducing property in \eqref{eq:kernel:trick}, and (\hyperlink{C1}{C1}). Similarly, we have
\begin{align*}
    \|\E\xi^2\|_\infty &\leq  \|\E(k_t(X,\cdot)\otimes k_t(X,\cdot))^2\|_\infty \\
    &=\|\E k_t(X,X)(k_t(X,\cdot)\otimes k_t(X,\cdot))\|_\infty\\
    & \leq C_1t^{-d/2}\|\Sigma_t\|_\infty\leq C_1t^{-d/2}
\end{align*}
and 
\begin{align*}
    \operatorname{tr}(\E\xi^2)&=\operatorname{tr}(\E k_t(X,X)(k_t(X,\cdot)\otimes k_t(X,\cdot)))\\
    &\leq C_1t^{-d/2}\E\operatorname{tr}(k_t(X,\cdot)\otimes k_t(X,\cdot))\\
    &=C_1^{-d/2}\E k_t(X,X)\leq C_1^2t^{-d}.
\end{align*}
An application of Lemma \ref{lem:operator:Bernstein} yields
\begin{align*}
    \PP\Big(\Big\|\frac{1}{n}\sum_{i=1}^n\xi_i\Big\|_{\infty}\geq u\Big)&\leq Ct^{-d/2}\exp\Big(-c\frac{nu^2}{t^{-d/2}+t^{-d/2}u}\Big)\\
    &\leq Ct^{-d/2}\exp(-cnt^{d/2}\min(u,u^2))
\end{align*}
for all $u>0$. Setting $$u=\max\Big(\sqrt{\frac{y}{nt^{d/2}}},\frac{y}{nt^{d/2}}\Big),$$ the claim follows.
\end{proof}

\begin{thm}\label{prop:heat:PCA:eigenvalue}
    Suppose that (\hyperlink{C1}{C1}) is satisfied. Then there are constants $c,C>0$ depending only on $C_1$ such that the following holds. Suppose that $\frac{\log n}{nt^{d/2}}\leq c$. Then, with probability at least $1-Cn^{-c}$, we have
    \begin{align*}
        \forall j\in \N,\qquad \Big|\frac{1-\hat\lambda_{t,j}}{t}-\frac{1-e^{-\mu_jt}}{t}\Big|\leq C\sqrt{\frac{\log n}{nt^{d/2+2}}}
    \end{align*}
    and
    \begin{align*}
        \forall j\in \N,\qquad \Big|\frac{1-\hat\lambda_{t,j}}{t}-\mu_j\Big|\leq C\sqrt{\frac{\log n}{nt^{d/2+2}}}+\mu_j^2t
    \end{align*}
\end{thm}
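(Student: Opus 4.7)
The plan is to derive both inequalities from a single application of Weyl's perturbation bound to $\hat\Sigma_t-\Sigma_t$, combined with Lemma \ref{lem:conc:eco:absolute} (for the stochastic part) and an elementary Taylor estimate for $x\mapsto (1-e^{-x})/x$ (for the bias part).

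\textbf{Step 1 (Weyl + concentration).} Since $\Sigma_t$ and $\hat\Sigma_t$ are self-adjoint compact operators on $\HH_t$, Weyl's inequality yields $|\hat\lambda_{t,j}-\lambda_{t,j}|\leq \|\hat\Sigma_t-\Sigma_t\|_\infty$ for every $j\in\N$, where $\lambda_{t,j}=e^{-\mu_j t}$ by Lemma \ref{lem:embedded:rv}. I would then apply Lemma \ref{lem:conc:eco:absolute} with $y=A\log n$ for a constant $A$ chosen so that, after using the hypothesis $\log n/(nt^{d/2})\leq c$ (which in particular forces $t^{-d/2}\leq Cn/\log n\leq Cn$), the prefactor $Ct^{-d/2}e^{-y}$ becomes at most $Cn^{-c}$. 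Under this hypothesis the linear term $y/(nt^{d/2})$ in Lemma \ref{lem:conc:eco:absolute} is dominated by the square-root term, giving
\begin{align*}
\|\hat\Sigma_t-\Sigma_t\|_\infty \leq C\sqrt{\frac{\log n}{nt^{d/2}}}
\end{align*}
with high probability. Dividing the Weyl bound by $t$ gives the first displayed inequality
$\big|\tfrac{1-\hat\lambda_{t,j}}{t}-\tfrac{1-e^{-\mu_jt}}{t}\big|\leq C\sqrt{\log n/(nt^{d/2+2})}$ simultaneously for all $j\in\N$.

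\textbf{Step 2 (bias / Taylor estimate).} For every $x\geq 0$ the elementary bound $|e^{-x}-1+x|\leq x^2/2$ (from $e^{-x}=1-x+\int_0^x(x-s)e^{-s}\,ds$) gives
\begin{align*}
\Big|\frac{1-e^{-\mu_j t}}{t}-\mu_j\Big|\leq \frac{\mu_j^2 t}{2}.
\end{align*}
Combining this deterministic inequality with Step 1 via the triangle inequality yields the second displayed inequality.

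\textbf{Expected obstacles.} There is essentially no hard step here: the statement is a direct consequence of Lemma \ref{lem:conc:eco:absolute} together with Weyl's inequality and a one-line Taylor expansion. The only care required is the bookkeeping in Step 1, where one must absorb the prefactor $t^{-d/2}$ from Lemma \ref{lem:conc:eco:absolute} into $n^{-c}$ by exploiting $t^{-d/2}\leq Cn/\log n$ and making $c$ in the hypothesis small (equivalently $A$ in $y=A\log n$ large) enough, and where one must verify that the linear Bernstein term is negligible compared to the subgaussian term in the regime $\log n/(nt^{d/2})\leq c$. Both are routine.
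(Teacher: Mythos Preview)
Your proposal is correct and matches the paper's own proof essentially line for line: the paper also derives the first claim from Lemma~\ref{lem:conc:eco:absolute} combined with Weyl's bound (Lemma~\ref{lem:absolute:Weyl:DK}), and the second claim from the elementary inequality $-x^2/2\leq 1-e^{-x}-x\leq 0$. Your bookkeeping in Step~1 (choosing $y=A\log n$ and absorbing the $t^{-d/2}$ prefactor) is exactly the implicit argument behind the paper's one-line proof.
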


\begin{proof}
    The first claim follows from Lemma \ref{lem:conc:eco:absolute} and Lemma~\ref{lem:absolute:Weyl:DK}. The second claim follows from the inequalities $-x^2/2\leq 1-e^{-x}-x\leq 0$, $x\geq 0$. 
\end{proof}

\begin{thm}\label{prop:heat:kernel:PCA:projector:DK}
    Suppose that (\hyperlink{C1}{C1}) is satisfied. Then there are constants $c,C>0$ depending only on $C_1$ such that the following holds. Let $j\in\N$ and $t\in(0,1]$ be such that $\mu_{j+1}>\mu_j$, $\mu_{j+1}t\leq 1$ and $\frac{\log n}{nt^{d/2}}\leq c$. Then, with probability at least $1-Cn^{-c}$, we have 
    \begin{align*}
        \|\hat P_{t,\leq j}-P_{t,\leq j}\|_2\leq C\frac{\sqrt{j}}{\mu_{j+1}-\mu_j} \sqrt{\frac{\log n}{nt^{d/2+2}}}.
    \end{align*}
\end{thm}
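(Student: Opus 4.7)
The plan is to combine the operator-norm concentration bound for $\hat\Sigma_t - \Sigma_t$ from Lemma \ref{lem:conc:eco:absolute} with a Davis-Kahan-type bound for the Hilbert-Schmidt distance between the rank-$j$ spectral projectors of $\Sigma_t$ and $\hat\Sigma_t$ (as collected in Lemma \ref{lem:absolute:Weyl:DK}), and then convert the spectral gap of $\Sigma_t$ at level $j$ into the spectral gap $\mu_{j+1}-\mu_j$ of $H$.

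First, I would apply Lemma \ref{lem:conc:eco:absolute} with $y = c'\log n$ for a suitable constant $c'$ depending only on $C_1$; since the hypothesis $\frac{\log n}{nt^{d/2}}\leq c$ ensures $\frac{y}{nt^{d/2}}\leq \sqrt{\frac{y}{nt^{d/2}}}$, the Bernstein term is dominated by the sub-Gaussian one, yielding
\begin{equation*}
\|\hat\Sigma_t-\Sigma_t\|_\infty \leq C\sqrt{\frac{\log n}{nt^{d/2}}}
\end{equation*}
with probability at least $1-Cn^{-c}$ (absorbing the polynomial prefactor $t^{-d/2}$ into $n^{-c}$ by adjusting $c'$, using again $\frac{\log n}{nt^{d/2}}\leq c$).

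Next, I would compute the spectral gap of $\Sigma_t$ separating $\{\lambda_{t,k}\}_{k\leq j}$ from $\{\lambda_{t,k}\}_{k>j}$. Since $\lambda_{t,k}=e^{-\mu_k t}$, the gap is
\begin{equation*}
\lambda_{t,j}-\lambda_{t,j+1} = e^{-\mu_j t}\bigl(1-e^{-(\mu_{j+1}-\mu_j)t}\bigr).
\end{equation*}
The condition $\mu_{j+1}t\leq 1$ forces $\mu_j t\leq 1$, so $e^{-\mu_j t}\geq e^{-1}$, and $(\mu_{j+1}-\mu_j)t\in[0,1]$, so the elementary inequality $1-e^{-x}\geq x/2$ on $[0,1]$ gives
\begin{equation*}
\lambda_{t,j}-\lambda_{t,j+1} \geq \tfrac{1}{2e}\,(\mu_{j+1}-\mu_j)\,t.
\end{equation*}
In particular, the gap is strictly positive under the hypotheses, so the spectral projectors $P_{t,\leq j}$ and $\hat P_{t,\leq j}$ are well-defined on the event of interest.

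Finally, I would invoke Lemma \ref{lem:absolute:Weyl:DK} (the Hilbert-Schmidt form of the Davis-Kahan theorem), which bounds $\|\hat P_{t,\leq j}-P_{t,\leq j}\|_2$ by $\sqrt{2j}$ times the ratio of the operator-norm perturbation to the spectral gap; the $\sqrt{j}$ arises because $\hat P_{t,\leq j}-P_{t,\leq j}$ has rank at most $2j$. Plugging in the two previous displays,
\begin{equation*}
\|\hat P_{t,\leq j}-P_{t,\leq j}\|_2 \;\leq\; \frac{C\sqrt{j}\,\|\hat\Sigma_t-\Sigma_t\|_\infty}{(\mu_{j+1}-\mu_j)\,t} \;\leq\; \frac{C\sqrt{j}}{\mu_{j+1}-\mu_j}\sqrt{\frac{\log n}{nt^{d/2+2}}},
\end{equation*}
which is the claimed bound. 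The only non-automatic point is verifying the gap lower bound from $\mu_{j+1}t\leq 1$; the rest is essentially the kernel Bernstein inequality followed by Davis-Kahan. There is no serious obstacle: any subtlety lies in making sure that the concentration bound controls the perturbation well below the gap so that the Davis-Kahan estimate is meaningful, but this is exactly what the hypothesis $\frac{\log n}{nt^{d/2}}\leq c$ encodes after choosing $c$ small enough relative to $C$ and $C_1$.
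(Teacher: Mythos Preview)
Your proposal is correct and follows essentially the same route as the paper: apply the concentration bound of Lemma~\ref{lem:conc:eco:absolute} with $y$ of order $\log n$, then the Davis--Kahan inequality from Lemma~\ref{lem:absolute:Weyl:DK}, and finally convert the gap $e^{-\mu_j t}-e^{-\mu_{j+1}t}$ into $(\mu_{j+1}-\mu_j)t$ using $\mu_{j+1}t\leq 1$. The paper writes the gap estimate as $\frac{1}{e^{-\mu_j t}-e^{-\mu_{j+1}t}}=\frac{e^{\mu_{j+1}t}}{e^{(\mu_{j+1}-\mu_j)t}-1}\leq \frac{e}{(\mu_{j+1}-\mu_j)t}$, which is equivalent to your lower bound via $1-e^{-x}\geq x/2$ on $[0,1]$.
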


\begin{proof}
    Combining Lemma \ref{lem:conc:eco:absolute} with the Davis-Kahan bound in Lemma \ref{lem:absolute:Weyl:DK}, we get
    \begin{align*}
        \|\hat P_{t,\leq j}-P_{t,\leq j}\|_2\leq C\frac{\sqrt{j}}{e^{-\mu_{j}t}-e^{-\mu_{j+1}t}} \sqrt{\frac{\log n}{nt^{d/2}}}
    \end{align*}
    with probability  at least $1-Cn^{-c}$. Inserting
    \begin{align*}
        \frac{1}{e^{-\mu_{j}t}-e^{-\mu_{j+1}t}}=\frac{e^{\mu_{j+1}t}}{e^{(\mu_{j+1}-\mu_j)t}-1}\leq \frac{e}{(\mu_{j+1}-\mu_j)t},
    \end{align*}
    where we also used that $\mu_{j+1}t\leq 1$ and $\mu_{j+1}>\mu_j$, the claim follows.
\end{proof}

To obtain more refined perturbation bounds for the nullspace, we introduce the relative complexity measure (see, e.g., Appendix \ref{Sec:perturbation:self-adjoint:operators})
\begin{align*}
    &\delta_{\leq m}(\hat\Sigma_t-\Sigma_t)\\
    &=\Big\|\Big(\frac{P_{t,\leq m}}{1-e^{-\mu_{m+1}t}}+R_{t,>m}\Big)^{\frac{1}{2}}(\hat\Sigma_t-\Sigma_t)\Big(\frac{P_{t,\leq m}}{1-e^{-\mu_{m+1}t}}+R_{t,>m}\Big)^{\frac{1}{2}}\Big\|_\infty
\end{align*} 
with reduced outer resolvent
\begin{align*}
    R_{t,>m}=\sum_{k>m}\frac{P_{t,k}}{1-e^{-\mu_kt}}.
\end{align*}
The following  result extends Lemma \ref{lem:conc:eco:absolute}.
\begin{lemma}\label{lem:conc:eco:relative}
    Suppose that (\hyperlink{C1}{C1}) and (\hyperlink{C2}{C2}) are satisfied. Let $d\geq 3$ and  $t\in(0,1]$ be such that $\mu_{m+1}t\leq 1$. Then, with probability at least $1-Ct^{-d/2+1}e^{-y}$, $y>0$, we have
    \begin{align*}
        \|\delta_{\leq m}(\hat\Sigma_t-\Sigma_t)\|_{\infty}\leq C\Big(\sqrt{\frac{y}{nt^{d/2+1}}}+\frac{y}{nt^{d/2}}\Big)
    \end{align*}
    with constant $C>0$ depending only on $C_1$, $C_2$ and $m$.
\end{lemma}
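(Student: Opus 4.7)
The plan is to apply the operator Bernstein inequality (Lemma~\ref{lem:operator:Bernstein}) to the i.i.d.\ mean-zero self-adjoint operators
\[
  \eta_i = B_t^{1/2}\xi_i B_t^{1/2},\qquad \xi_i = k_t(X_i,\cdot)\otimes k_t(X_i,\cdot)-\Sigma_t,
\]
where $B_t:=P_{t,\leq m}/(1-e^{-\mu_{m+1}t})+R_{t,>m}$, so that $\delta_{\leq m}(\hat\Sigma_t-\Sigma_t)=\|\tfrac{1}{n}\sum_i\eta_i\|_\infty$. Setting $Y:=B_t^{1/2}k_t(X,\cdot)$, I have $\eta = Y\otimes Y-\E Y\otimes Y$, hence $\|\eta\|_\infty\leq 2\|Y\|_{\HH_t}^2$ a.s.\ and, as non-negative self-adjoint operators, $\E\eta^2\leq \E\|Y\|_{\HH_t}^2\,(Y\otimes Y)$. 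The proof therefore reduces to sharp bounds on $\|Y\|_{\HH_t}^2$ (almost surely), on $\|B_t^{1/2}\Sigma_tB_t^{1/2}\|_\infty$, and on $\operatorname{tr}(B_t^{1/2}\Sigma_tB_t^{1/2})$, which supply the three Bernstein inputs (max, variance proxy, intrinsic dimension).

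The core computation is the almost-sure bound on $\|Y\|_{\HH_t}^2$. Diagonalising $B_t$ in $\{u_{t,j}\}$ and using the KL representation~\eqref{eq:KL:representation} yields
\[
  \|Y\|_{\HH_t}^2 = \frac{1}{1-e^{-\mu_{m+1}t}}\sum_{j=1}^m\phi_j^2(X)+\sum_{j>m}\frac{\phi_j^2(X)}{e^{\mu_jt}-1}.
\]
The first (finite) sum is $\leq C_2 m/(1-e^{-\mu_{m+1}t})\leq Cm/(\mu_{m+1}t)\leq Ct^{-d/2}$, using~(\hyperlink{C2}{C2}), the hypothesis $\mu_{m+1}t\leq 1$, and $d\geq 2$. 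For the second sum I split at $J:=\max\{j:\mu_jt\leq 1\}$, which by~\eqref{eq:Weyls:law:lower:bound} satisfies $J\leq Ct^{-d/2}$. On $\{j>J\}$ I use $1/(e^{\mu_jt}-1)\leq 2e^{-\mu_jt}$ together with Mercer~\eqref{eq:Mercer:heat:kernel} and~(\hyperlink{C1}{C1}) to bound the tail by $2k_t(X,X)\leq 2C_1t^{-d/2}$. On $\{m<j\leq J\}$ I use $1/(e^{\mu_jt}-1)\leq 1/(\mu_jt)$ and $\mu_j\geq cj^{2/d}$; Abel summation against $A_j=\sum_{k\leq j}\phi_k^2(X)\leq C_2 j$ from~(\hyperlink{C2}{C2}) then reduces the sum to $(C/t)\sum_{j\leq J}j^{-2/d}$. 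The assumption $d\geq 3$ enters here: the partial sum is $O(J^{1-2/d})=O(t^{-d/2+1})$, giving a total contribution $\leq Ct^{-d/2}$.

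With $\|Y\|_{\HH_t}^2\leq Ct^{-d/2}$ a.s., the remaining ingredients follow by elementary algebra. The operator $B_t^{1/2}\Sigma_tB_t^{1/2}$ is diagonal in $\{u_{t,j}\}$ with largest eigenvalue $1/(1-e^{-\mu_{m+1}t})\leq 2/(\mu_{m+1}t)$, so
\[
  \|\E\eta^2\|_\infty\leq \|Y\|_{\HH_t}^2\,\|B_t^{1/2}\Sigma_tB_t^{1/2}\|_\infty\leq Ct^{-d/2-1}.
\]
For the trace, $\operatorname{tr}(B_t\Sigma_t)=m/(1-e^{-\mu_{m+1}t})+\sum_{j>m}1/(e^{\mu_jt}-1)$, and the same split at $J$ (now without a $\phi_j^2(X)$ factor, so Abel summation is replaced by $\sum_{j\leq J}j^{-2/d}\leq CJ^{1-2/d}$) gives $\operatorname{tr}(B_t\Sigma_t)\leq Ct^{-d/2}$, whence $\operatorname{tr}(\E\eta^2)\leq Ct^{-d}$. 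The intrinsic dimension $\operatorname{tr}(\E\eta^2)/\|\E\eta^2\|_\infty\leq Ct^{-d/2+1}$ matches the prefactor in the announced probability.

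Plugging $R=Ct^{-d/2}$, variance proxy $Ct^{-d/2-1}$, and intrinsic dimension $Ct^{-d/2+1}$ into Lemma~\ref{lem:operator:Bernstein} and choosing $u=C\max(\sqrt{y/(nt^{d/2+1})},\,y/(nt^{d/2}))$ delivers the claim. The main obstacle I expect is the Abel-summation / tail-splitting argument for $\|Y\|_{\HH_t}^2$, where the assumption $d\geq 3$ is essential (via $J^{1-2/d}\leq Ct^{-d/2+1}$) and~(\hyperlink{C2}{C2}) is genuinely used. Everything else parallels the proof of Lemma~\ref{lem:conc:eco:absolute}; the crucial new feature is that conjugation by $B_t^{1/2}$ converts the rough $t^{-d/2}$ bound on $\|Y\|^2$ into a strictly better variance bound $t^{-d/2-1}$ and a strictly smaller intrinsic dimension $t^{-d/2+1}$, which is precisely the source of the sharpened rate.
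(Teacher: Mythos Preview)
Your proposal is correct and follows essentially the same approach as the paper: the paper introduces the same transformed variable $Z=B_t^{1/2}k_t(X,\cdot)$, obtains the identical a.s.\ bound $\|Z\|_{\HH_t}^2\leq Ct^{-d/2}$ via the KL representation together with Abel/partial summation against (\hyperlink{C2}{C2}), and then feeds the same three quantities ($R\leq Ct^{-d/2}$, $V\leq Ct^{-d/2-1}$, $VD\leq Ct^{-d}$) into the operator Bernstein inequality. The only cosmetic difference is that the paper packages the series estimate $\sum_{j>m}1/(e^{\mu_jt}-1)\leq Ct^{-d/2}$ into Lemma~\ref{lem:eigenvalue:estimate}, whereas you carry out the equivalent split at $J=\max\{j:\mu_jt\leq 1\}$ inline.
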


\begin{proof}
    For $1\leq i\leq n$, set
    \begin{align*}
        Z&=\Big(\frac{P_{t,\leq m}}{1-e^{-\mu_{m+1}t}}+R_{t,>m}\Big)^{\frac{1}{2}}X,\qquad Z_i&=\Big(\frac{P_{t,\leq m}}{1-e^{-\mu_{m+1}t}}+R_{t,>m}\Big)^{\frac{1}{2}}X_i
    \end{align*}
    and 
    \begin{align*}
        \xi=Z\otimes Z-\E(Z\otimes Z), \qquad \xi_i=Z_i\otimes Z_i-\E(Z_i\otimes Z_i).
    \end{align*}
    Then we have 
    \begin{align*}
        \|\delta_{\leq m}(\hat\Sigma_t-\Sigma_t)\|_{\infty}=\Big\|\frac{1}{n}\sum_{i=1}^n\xi_i\Big\|_{\infty}
    \end{align*}
    and our goal is to again apply the operator Bernstein inequality stated in Appendix \ref{app:concentration:inequalities}. We start with bounding $\|Z\|_{\mathcal{H}_t}$. By the KL representation in \eqref{eq:KL:representation}, we have
    \begin{align}\label{eq:norm:normalized:r:v}
        \|Z\|_{\mathcal{H}_t}^2=\frac{1}{1-e^{-\mu_{m+1}t}}\sum_{k=1}^m\phi_k^2(X)+\sum_{k>m}\frac{e^{-\mu_kt}}{1-e^{-\mu_kt}}\phi_k^2(X).
    \end{align}
    On the one hand, we have
    \begin{align}\label{eq:norm:normalized:r:v:1}
        \frac{1}{1-e^{-\mu_{m+1}t}}\sum_{k=1}^m\phi_k^2(X)&\leq \frac{e^{\mu_{m+1}t}}{e^{\mu_{m+1}t}-1}\sum_{k=1}^m\phi_k^2(X)\nonumber\\
        &\leq \frac{eC_2m}{\mu_{m+1}}t^{-1}\leq Ct^{-d/2}
    \end{align}
    with a constant $C>0$ depending only on $C_1$, $C_2$ and $m$, and where we applied $\mu_{m+1}t\leq 1$, (\hyperlink{C2}{C2}), \eqref{eq:Weyls:law:lower:bound} and $d\geq 3$. To bound the second term on the right-hand side of \eqref{eq:norm:normalized:r:v}, introduce
    \begin{align*}
        a_k=\frac{e^{-\mu_kt}}{1-e^{-\mu_kt}}\quad\text{and}\quad B_k=\sum_{l=1}^k\phi_l^2(X)
    \end{align*}
    for $k>m$. Then $a_{m+1},a_{m+2},\dots$ is a non-increasing sequence with 
    \begin{align*}
        \sum_{k>m}a_k\leq Ct^{-d/2}
    \end{align*}
     by Lemma \ref{lem:eigenvalue:estimate}, and the $B_k$ satisfy $B_k\leq C_2k$ for all $k>m$ by (\hyperlink{C2}{C2}). Hence, by partial summation, we get
     \begin{align*}
         \sum_{k=m+1}^Ma_k(B_k-B_{k-1})&=a_MB_M-a_{m+1}B_m+\sum_{k=m+1}^{M-1}(a_k-a_{k+1})B_k\\
         &\leq Ca_MM+C\sum_{k=m+1}^{M-1}(a_k-a_{k+1})k\leq C\sum_{k=m+1}^Ma_k
     \end{align*}
     for all $M>m$. Letting $M$ go to infinity, we get 
     \begin{align}\label{eq:norm:normalized:r:v:2}
         \sum_{k>m}\frac{e^{-\mu_kt}}{1-e^{-\mu_kt}}\phi_k^2(X)=\sum_{k>m}a_k(B_k-B_{k-1})\leq C\sum_{k>m}a_k\leq  Ct^{-d/2}.
     \end{align}
    Inserting \eqref{eq:norm:normalized:r:v:1} and \eqref{eq:norm:normalized:r:v:2} into \eqref{eq:norm:normalized:r:v}, we arrive at
    \begin{align*}
        \|Z\|_{\mathcal{H}_t}^2\leq Ct^{-d/2},
    \end{align*}
    with a constant $C>0$ depending only on $C_1$, $C_2$ and $m$. Next, let us bound $\|\E (Z\otimes Z)\|_\infty$. By construction, we have
    \begin{align*}
       \E (Z\otimes Z)= \Big(\frac{P_{t,\leq m}}{1-e^{-\mu_{m+1}t}}+R_{t,>m}\Big)^{1/2}\Sigma_t\Big(\frac{P_{t,\leq m}}{1-e^{-\mu_{m+1}t}}+R_{t,>m}\Big)^{1/2},
    \end{align*}
    so that 
    \begin{align*}
        \|\E (Z\otimes Z)\|_\infty=\frac{1}{1-e^{-\mu_{m+1}t}}=\frac{e^{\mu_{m+1}t}}{e^{\mu_{m+1}t}-1}\leq \frac{e}{\mu_{m+1}t}\leq \frac{C}{t},
    \end{align*}
    where we applied the spectral decomposition in Lemma \ref{lem:embedded:rv}, the condition $\mu_{m+1}t\leq 1$ and \eqref{eq:Weyls:law:lower:bound}.
    
    Equipped with the bounds for $\|Z\|_{\mathcal{H}_t}$ and $\|\E (Z\otimes Z)\|_\infty$, we can now proceed as in Lemma 2. In particular, we have
    \begin{align*}
    \|\xi\|_\infty \leq  \|Z\|_{\HH_t}^2+\E \|Z\|_{\HH_t}^2\leq Ct^{-d/2},
    \end{align*}
\begin{align}\label{eq:}
    \|\E\xi^2\|_\infty &\leq \|\E \|Z\|_{\HH_t}(Z\otimes Z)\|_\infty\leq Ct^{-d/2-1},
\end{align}
and 
\begin{align*}
    \operatorname{tr}(\E\xi^2)&=\operatorname{tr}(\|Z\|_{\HH_t}(Z\otimes Z))\leq C t^{-d/2}\E \|Z\|_{\HH_t}^2\leq C^2t^{-d}.
\end{align*}
An application of Lemma \ref{lem:operator:Bernstein} yields
\begin{align*}
    \PP\Big(\Big\|\frac{1}{n}\sum_{i=1}^n\xi_i\Big\|_{\infty}\geq u\Big)&\leq Ct^{-d/2+1}\exp\Big(-c\frac{nu^2}{t^{-d/2-1}+t^{-d/2}u}\Big)\\
    &\leq Ct^{-d/2+1}\exp(-cn\min(t^{d/2}u,u^2t^{d/2+1}))
\end{align*}
for all $u>0$. Setting $$u=\max\Big(\sqrt{\frac{y}{nt^{d/2+1}}},\frac{y}{nt^{d/2}}\Big),$$ the claim follows.
\end{proof}

\begin{thm}\label{prop:improved:perturbation:bound:nullspace}
    Suppose that (\hyperlink{C1}{C1}) and (\hyperlink{C2}{C2}) are satisfied. Let $n\geq 2$, $d\geq 5$ and $t\in(0,1]$ be such that $\mu_{m+1}t\leq 1$ and $\frac{\log n}{nt^{d/2+1}}\leq c$. Then, with probability at least $1-Cn^{-c}$, we have
    \begin{align*}
        \Big|\frac{1-\hat\lambda_{t,j}}{t}\Big|
        \leq C\sqrt{\frac{\log n}{nt^2}}+C\frac{\log n}{nt^{d/2+1}}
    \end{align*}
     for all $1\leq j\leq m$ and 
    \begin{align*}
        \|\hat P_{t,\leq m}-P_{t,\leq m}\|_2\leq C\sqrt{\frac{\log n}{nt^{d/2}}}+C\frac{\log n}{nt^{d/2+1}},
    \end{align*}
    with constants $c,C>0$ depending only on $C_1$, $C_2$ and $m$. In particular, if additionally $\frac{\log n}{nt^{d/2+2}}\leq c$ holds, then, with probability at least $1-Cn^{-c}$, we have
    \begin{align*}
        \Big|\frac{1-\hat\lambda_{t,j}}{t}\Big|
        \leq C\sqrt{\frac{\log n}{nt^{d/2}}},
    \end{align*}
     for all $1\leq j\leq m$ and 
    \begin{align*}
        \|\hat P_{t,\leq m}-P_{t,\leq m}\|_2\leq C\sqrt{\frac{\log n}{nt^{d/2}}}.
    \end{align*}
\end{thm}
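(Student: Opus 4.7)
The plan is to parallel the proofs of Theorems~\ref{prop:heat:PCA:eigenvalue} and~\ref{prop:heat:kernel:PCA:projector:DK}, but with the absolute concentration bound (Lemma~\ref{lem:conc:eco:absolute}) and the absolute Weyl/Davis--Kahan inequality (Lemma~\ref{lem:absolute:Weyl:DK}) replaced by the relative concentration bound on $\|\delta_{\leq m}(\hat\Sigma_t-\Sigma_t)\|_\infty$ from Lemma~\ref{lem:conc:eco:relative}, combined with the relative perturbation inequalities Lemmas~\ref{lem:relative:Weyl} and~\ref{lem:relative:DK}. The gain over Theorem~\ref{prop:heat:kernel:PCA:projector:DK} stems from the normalization built into $\delta_{\leq m}$: on the nullspace it rescales by $1/(1-e^{-\mu_{m+1}t})\asymp 1/t$ rather than by the full inverse gap, which will save a factor of $t^{-2}$ under the square root in the leading stochastic term. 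The restriction to the nullspace is essential: there $\lambda_{t,j}=e^{-\mu_jt}=1$ for every $1\le j\le m$, so there is no spectral gap to worry about \emph{inside} the nullspace, only the gap $1-e^{-\mu_{m+1}t}$ separating it from its complement.

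My first step is to apply Lemma~\ref{lem:conc:eco:relative} with $y=C_0\log n$ for a sufficiently large absolute constant $C_0$, obtaining, on an event of probability at least $1-Ct^{-d/2+1}n^{-C_0}$,
\begin{align*}
\|\delta_{\leq m}(\hat\Sigma_t-\Sigma_t)\|_\infty \;\le\; C\sqrt{\frac{\log n}{nt^{d/2+1}}}+C\frac{\log n}{nt^{d/2}}.
\end{align*}
The technical prefactor $t^{-d/2+1}$ is absorbed into a clean $n^{-c}$ tail using the standing hypothesis $\log n/(nt^{d/2+1})\le c$: this gives $t^{-d/2-1}\le cn/\log n$, hence $t^{-d/2+1}=t^{2}\cdot t^{-d/2-1}\le cn$, so that for $C_0$ sufficiently large the failure probability is at most $Cn^{1-C_0}\le Cn^{-c}$. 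On this event I feed the concentration bound into the relative Weyl inequality (Lemma~\ref{lem:relative:Weyl}) for the nullspace eigenvalues and into the relative Davis--Kahan inequality (Lemma~\ref{lem:relative:DK}) for the associated projector. Because $\lambda_{t,j}=1$ for $j\le m$, the Weyl bound reads as an estimate on $|1-\hat\lambda_{t,j}|/(1-e^{-\mu_{m+1}t})$; using $1-e^{-\mu_{m+1}t}\le\mu_{m+1}t\le t$ and dividing by $t$ produces the claimed eigenvalue bound, with the $\sqrt{\log n/(nt^2)}$ contribution coming from the first-order diagonal term (a parametric-rate empirical average of the $\phi_j^2(X_i)$ over the nullspace) and $\log n/(nt^{d/2+1})$ from the higher-order remainder. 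Analogously, the relative Davis--Kahan bound converts the same input into the claimed estimate on $\|\hat P_{t,\le m}-P_{t,\le m}\|_2$, with the spectral gap absorbed by the normalization in $\delta_{\leq m}$ and the $\sqrt{m}$ factor folded into the manifold-dependent constant $C$.

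The \emph{in particular} clause is then a pure simplification: the stronger hypothesis $\log n/(nt^{d/2+2})\le c$ implies $\log n/(nt^{d/2+1})\le ct$, so that $\log n/(nt^{d/2+1})\le\sqrt{\log n/(nt^{d/2})}$, while $\sqrt{\log n/(nt^{2})}\le\sqrt{\log n/(nt^{d/2})}$ because $d\ge 4$; both compound bounds therefore collapse to $C\sqrt{\log n/(nt^{d/2})}$. I expect the principal technical obstacle to lie not in the combination itself, which is mostly mechanical, but in verifying the smallness hypothesis on $\|\delta_{\leq m}(\hat\Sigma_t-\Sigma_t)\|_\infty$ required by Lemmas~\ref{lem:relative:Weyl} and~\ref{lem:relative:DK} so that the associated perturbation series converges; this is precisely where the assumption $\log n/(nt^{d/2+1})\le c$ is invoked a second time.
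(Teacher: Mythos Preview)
Your overall architecture is right—relative concentration (Lemma~\ref{lem:conc:eco:relative}) fed into the second-order perturbation bounds of Lemmas~\ref{lem:relative:Weyl} and~\ref{lem:relative:DK}—and your treatment of the eigenvalue part is essentially what the paper does: the diagonal linear term $\|P_{t,\le m}(\hat\Sigma_t-\Sigma_t)P_{t,\le m}\|_2$ is a finite $m\times m$ block of centered averages of bounded variables (via (\hyperlink{C2}{C2})), hence parametric rate, and the remainder is $\delta_{\le m}^2$. The ``in particular'' simplification is also correct.

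The gap is in the projector bound. Feeding only $\delta_{\le m}$ into Lemma~\ref{lem:relative:DK} gives at best $\|\hat P_{t,\le m}-P_{t,\le m}\|_2\le C\sqrt m\,\delta_{\le m}\le C\sqrt{\log n/(nt^{d/2+1})}$, which is off from the claimed $\sqrt{\log n/(nt^{d/2})}$ by a factor $t^{-1/2}$. To reach the stated rate you must use the \emph{second} inequality in Lemma~\ref{lem:relative:DK} and bound the linear term $\|P_{t,\le m}(\hat\Sigma_t-\Sigma_t)R_{t,>m}\|_2$ separately. This term is an \emph{infinite} weighted sum,
\[
\sum_{j\le m}\sum_{k>m}\frac{e^{-\mu_kt}}{(1-e^{-\mu_kt})^2}\Big(\frac1n\sum_i\phi_j(X_i)\phi_k(X_i)\Big)^2,
\]
and is not covered by the ``same input'' you describe: it needs the dedicated concentration inequality for weighted Hilbert-norm sums (Lemma~\ref{lem:conc:Hilbert:norm}) together with the eigenvalue-sum estimates of Lemma~\ref{lem:eigenvalue:estimate}. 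In particular, this is where the hypothesis $d\ge 5$ enters (to ensure $\sum_{k>m}k\,e^{-\mu_kt}/(1-e^{-\mu_kt})^2\le Ct^{-d}$), which your proposal never invokes. Without this step the projector bound in the theorem does not follow.
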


Extensions to other eigenspaces can be obtained analogously, but require extensions of the perturbation bounds from \cite{W19} to the case of eigenvalues with multiplicities. 

\begin{proof}
    By Lemma \ref{lem:conc:eco:relative} we have
    \begin{align}\label{proof:improved:perturbation:bound:nullspace:1}
        \delta_{\leq m}(\hat\Sigma_t-\Sigma_t)&\leq C\Big(\sqrt{\frac{\log n}{nt^{d/2+1}}}+\frac{\log n}{nt^{d/2}}\Big)\leq 1/4
    \end{align}
    with probability at least $1-Cn^{-c}$, provided that the constant $c$ in the assumptions of Theorem \ref{prop:improved:perturbation:bound:nullspace} is chosen suffficiently small. On this event, Lemma~\ref{lem:relative:Weyl} yields 
    \begin{align*}
        \Big|\frac{1-\hat\lambda_{t,j}}{t}\Big|
        &\leq \frac{1}{t}\|P_{t,\leq m}(\hat\Sigma_t-\Sigma_t)P_{t,\leq m}\|_2+\mu_{m+1}\cdot\delta_{\leq m}(\hat\Sigma_t-\Sigma_t)^2
    \end{align*}
    for all $1\leq j\leq m$ and 
    \begin{align*}
        \|\hat P_{t,\leq m}-P_{t,\leq m}\|_2\leq \sqrt{2}\|P_{t,\leq m}(\hat\Sigma_t-\Sigma_t)R_{t,>m}\|_2+C\cdot \delta_{\leq m}(\hat\Sigma_t-\Sigma_t)^2,
    \end{align*}
    where we also used that $\mu_1=0$ and that $t^{-1}(1-e^{-\mu_{m+1}t})\leq \mu_{m+1}$ in the first inequality. We have 
    \begin{align*}
        \|P_{t,\leq m}(\hat\Sigma_t-\Sigma_t)P_{t,\leq m}\|_2^2=\sum_{j=1}^m\sum_{k=1}^m\Big(\frac{1}{n}\sum_{i=1}^n\phi_j(X_i)\phi_k(X_i)-\delta_{jk}\Big)^2
    \end{align*}
    and 
    \begin{align*}
        \|P_{t,\leq m}(\hat\Sigma_t-\Sigma_t)R_{t,>m}\|_2^2=\sum_{j=1}^m\sum_{k=m+1}^\infty \frac{e^{-\mu_k t}}{(1-e^{-\mu_kt})^2}\Big(\frac{1}{n}\sum_{i=1}^n\phi_j(X_i)\phi_k(X_i)\Big)^2,
    \end{align*}
    as can be seen from inserting the KL representation in \eqref{eq:KL:representation}. The sequence $e^{-\mu_k t}/(1-e^{-\mu_kt})^2$ is non-increasing and satisfies 
    \begin{align*}
        \sum_{k>m}\frac{e^{-\mu_k t}}{(1-e^{-\mu_kt})^2}\leq Ct^{-d/2}\quad\text{and}\quad \sum_{k>m}\frac{ke^{-\mu_k t}}{(1-e^{-\mu_kt})^2}\leq Ct^{-d},
    \end{align*}
    as follows from Lemma \ref{lem:eigenvalue:estimate}, (\hyperlink{C1}{C1}), \eqref{eq:Weyls:law:lower:bound}, and the fact that $d\geq 5$. Moreover, by (\hyperlink{C2}{C2}), we have 
    \begin{align*}
        \forall N\in\N,\qquad \Big|\sum_{j=1}^m\sum_{k=1}^N\phi_j^2(X)\phi_k^2(X)\Big|\leq C_2^2mN
    \end{align*}
    Hence, by Lemma \ref{lem:conc:Hilbert:norm}, we obtain with probability at least $1-Cn^{-c}$,
   \begin{align*}
        \sum_{j=1}^m\sum_{k=1}^m\Big(\frac{1}{n}\sum_{i=1}^n\phi_j(X_i)\phi_k(X_i)-\delta_{jk}\Big)^2\leq C\frac{\log n}{n}
    \end{align*}
    and 
    \begin{align*}
        \sum_{j=1}^m\sum_{k=m+1}^\infty \frac{e^{-\mu_k t}}{(1-e^{-\mu_kt})^2}\Big(\frac{1}{n}\sum_{i=1}^n\phi_j(X_i)\phi_k(X_i)\Big)^2&\leq C\Big(\frac{\log n}{nt^{d/2}}+\frac{\log^2 n}{n^2t^d}\Big)\\
        &\leq C\frac{\log n}{nt^{d/2}}.
    \end{align*}
    We conclude that, with probability at least $1-Cn^{-c}$,
    \begin{align*}
        \Big|\frac{1-\hat\lambda_{t,j}}{t}\Big|
        &\leq C\sqrt{\frac{\log n}{nt^2}}+C\frac{\log n}{nt^{d/2+1}}+C\frac{\log^2 n}{n^2t^d}\\
        &\leq C\sqrt{\frac{\log n}{nt^2}}+C\frac{\log n}{nt^{d/2+1}},
    \end{align*}
     for all $1\leq j\leq m$ and 
    \begin{align*}
        \|\hat P_{t,\leq m}-P_{t,\leq m}\|_2\leq C\sqrt{\frac{\log n}{nt^{d/2}}}+C\frac{\log n}{nt^{d/2+1}}.
    \end{align*}
    This completes the proof.
    \end{proof}

\section{The approximation error}\label{sec:approx:error}

\subsection{Different graph Laplacians}

Recall that $\M$ is a closed submanifold of $\R^p$ with $\dim \M=d$ and $\operatorname{vol}(M)=1$, equipped with the Riemannian metric induced by the ambient space. Let $d_{\M}$ be the intrinsic distance on $\M$ (see, e.g., Definition 4.21 in \cite{RS}), where we additionally set $d_{\M}(x,y)=+\infty$ whenever $x$ and $y$ belong to different components of $\M$. Moreover, let $\|x-y\|_2$ be the Euclidean (extrinsic) distance between $x,y\in\M$. We consider the heat kernel $k_t$, the geodesic kernel $g_t$ and the Gaussian kernel $w_t$, where the latter two are defined by
\begin{align*}
     g_t(x,y)&=\frac{1}{(4\pi t)^{d/2}}\exp\Big(-\frac{d_{\M}(x,y)^2}{4t}\Big),\\
    w_t(x,y)&=\frac{1}{(4\pi t)^{d/2}}\exp\Big(-\frac{\|x-y\|_2^2}{4t}\Big),
\end{align*}
for $x,y\in \M$. Based on these kernels, we introduce the following un-normalized graph Laplacians
\begin{align*}
    \mathscr{L}_{k_t}&=\frac{ D_{k_t}-K_t}{t}\in\R^{n\times n},\\
    \mathscr{L}_{g_t}&=\frac{ D_{g_t}-G_t}{t}\in\R^{n\times n},\\
    \mathscr{L}_{w_t}&=\frac{D_{w_t}-W_t}{t}\in\R^{n\times n},
\end{align*}
where
\begin{align*}
    (K_t)_{ij}&=\frac{1}{n}k_t(X_i,X_j),\\
    (G_t)_{ij}&=\frac{1}{n}g_t(X_i,X_j),\\
    (W_t)_{ij}&=\frac{1}{n}w_t(X_i,X_j),
\end{align*}
and the degree matrices $D_{k_t}$, $D_{g_t}$ and $D_{w_t}$ are diagonal matrices with 
\begin{align*}
    (D_{k_t})_{ii}&=\frac{1}{n}\sum_{j=1}^n k_t(X_i,X_j),\\
    (D_{g_t})_{ii}&=\frac{1}{n}\sum_{j=1}^n g_t(X_i,X_j),\\
    (D_{w_t})_{ii}&=\frac{1}{n}\sum_{j=1}^n w_t(X_i,X_j).
\end{align*}

\subsection{Analysis of the approximation error}\label{subsec:approx:error}

In order to relate the different graph Laplacians, we need relationships between the extrinsic and the intrinsic distance and between the geodesic kernel and the heat kernel. Again, for better reference, we formulate these relationships as conditions with explicit constants and refer to the literature where these properties are proven. 

\begin{description}
    \item[\hypertarget{C3}{(C3)}] (Extrinsic and intrinsic distance). Suppose that there are constants $c_3\in(0,1)$ and $C_3>1$ such that 
    \begin{align*}
        0\leq d_{\M}^2(x,y)-\|x-y\|_2^2\leq C_3d_{\M}^4(x,y)
    \end{align*}
    for all $x,y\in\M$, $d_{\M}(x,y)<c_3$ and
    \begin{align*}
        \inf_{x,y\in\M, d_{\M}(x,y)>s}\|x-y\|_2^2\geq c_3d_{\M}^2(x,y)
    \end{align*}
    for all $0<s<c_3$.
\end{description}
The first condition is standard in the manifold learning literature (see, e.g., \cite{GK,Belkin2006ConvergenceOL,CW}), and proven by the exponential map and Taylor expansions. The second condition measures the bottleneckness of $\M$ and the distance between the different connected components of $\M$. 

\begin{lemma}\label{lem:approx:g:w}
    Suppose that (\hyperlink{C3}{C3}) is satisfied. Let $K\in\N$. Then there exist constants $C>0$ and $c\in(0,1)$ depending only on $c_3,C_3$, $d$ and $K$ such that 
    \begin{align*}
        |w_t(x,y)-g_t(x,y)|\leq Cg_t(x,y)t\log^2(\tfrac{e}{t})+Ct^K
    \end{align*}
    for all $x,y\in\M$ and all $t\in(0,c)$.
\end{lemma}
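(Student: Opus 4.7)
The plan is to split according to the size of $d_\M(x,y)^2$ relative to $t\log(e/t)$. In the \emph{near} regime $d_\M(x,y)^2\le A\,t\log(e/t)$, with $A$ a large constant to be chosen depending on $K$ and $d$, condition~(\hyperlink{C3}{C3}) lets us Taylor-expand the ratio $w_t/g_t$, producing the first term in the bound. In the \emph{far} regime $d_\M(x,y)^2>A\,t\log(e/t)$, the two kernels are individually of size at most $t^K$, so that $|w_t-g_t|\le w_t+g_t$ already yields the second term.

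Choose $c\in(0,1)$ small enough so that $Ac\log(e/c)<c_3^2$; without loss of generality, shrink $c_3$ (this only strengthens~(\hyperlink{C3}{C3})) so that $C_3c_3^2\le 1/2$. In the near regime we then have $d_\M(x,y)<c_3$, so the first inequality of~(\hyperlink{C3}{C3}) gives $0\le d_\M(x,y)^2-\|x-y\|_2^2\le C_3 d_\M(x,y)^4$. Writing
\begin{align*}
w_t(x,y)-g_t(x,y)=g_t(x,y)\Bigl(\exp\Bigl(\tfrac{d_\M(x,y)^2-\|x-y\|_2^2}{4t}\Bigr)-1\Bigr),
\end{align*}
the exponent is at most $C_3 d_\M(x,y)^4/(4t)\le C_3A^2 t\log^2(e/t)/4$, which is $\le 1$ for $t\in(0,c)$ after further shrinking $c$. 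Using $e^u-1\le 2u$ for $u\in[0,1]$ then yields $|w_t(x,y)-g_t(x,y)|\le C g_t(x,y)\,t\log^2(e/t)$.

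In the far regime, a Gaussian tail estimate gives
\begin{align*}
g_t(x,y)\le(4\pi t)^{-d/2}\exp\bigl(-A\log(e/t)/4\bigr)\le C\,t^{A/4-d/2},
\end{align*}
which is $\le Ct^K$ once $A\ge 4(K+d/2)$. For $w_t$, when $d_\M(x,y)\le c_3$ the reduction $C_3c_3^2\le 1/2$ together with~(\hyperlink{C3}{C3}) gives $\|x-y\|_2^2\ge d_\M(x,y)^2/2>(A/2)\,t\log(e/t)$, hence $w_t(x,y)\le Ct^{A/8-d/2}\le Ct^K$ after enlarging $A$ if needed. When $d_\M(x,y)>c_3$ (in particular when $x$ and $y$ lie in different connected components of $\M$, so that $d_\M=+\infty$), the bottleneck part of~(\hyperlink{C3}{C3}) provides a uniform positive lower bound $\|x-y\|_2\ge c_*>0$ depending only on $c_3$; consequently $w_t(x,y)\le(4\pi t)^{-d/2}\exp(-c_*^2/(4t))\le Ct^K$ after shrinking $c$ once more.

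The argument is essentially book-keeping; the main points to watch are choosing $A$ large enough (depending on $K,d$) to kill the Gaussian tails of both $w_t$ and $g_t$ down to $t^K$ in the far regime, and extracting from the second part of~(\hyperlink{C3}{C3}) a positive lower bound on $\|x-y\|_2$ whenever $d_\M(x,y)>c_3$. No delicate new idea is required, and the resulting constants $c,C$ depend only on $c_3,C_3,d,K$ as claimed.
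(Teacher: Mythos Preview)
Your proof is correct and follows essentially the same near/far split as the paper, with the same Taylor expansion in the near regime. The only difference is cosmetic: in the far regime the paper observes $0\le w_t-g_t\le w_t$ (since $d_\M\ge\|x-y\|_2$ always) and then bounds $w_t$ in one stroke via the second part of~(\hyperlink{C3}{C3}) applied at $s=\delta(t)$, whereas you bound $w_t$ and $g_t$ separately and subdivide further according to whether $d_\M\lessgtr c_3$; both routes give the same outcome.
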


\begin{proof}
    Let $t\in(0,1]$ be such that 
    \begin{align*}
        \delta(t):=\sqrt{\frac{4(K+d/2)}{c_3}t\log(\tfrac{e}{t})}<c_3.
    \end{align*}
    Let $x,y\in\M$ be arbitrary. If $d_{\M}(x,y)>\delta(t)$, then (\hyperlink{C3}{C3}) yields $\|x-y\|_2^2>c_3\delta(t)^2$ and thus
    \begin{align}
        0\leq w_t(x,y)-g_t(x,y)&\leq w_t(x,y)\leq \frac{1}{(4\pi t)^{d/2}}\exp\Big(-c_3\frac{\delta(t)^2}{4t}\Big)\nonumber\\
        & \leq  Ct^{-d/2}t^{K+d/2}=Ct^{K}\label{eq:g:w:global}.
    \end{align}
    On the other hand, if $d_{\M}(x,y)\leq \delta(t)$, then (\hyperlink{C3}{C3}) yields
    \begin{align}
        0\leq w_t(x,y)-g_t(x,y)&\leq g_t(x,y)\exp\Big(C_3\frac{d_{\M}^4(x,y)}{4t}\Big)-g_t(x,y)\nonumber\\
        &\leq g_t(x,y)(\exp(Ct\log^2(\tfrac{e}{t}))-1)\nonumber\\
        &\leq Cg_t(x,y)t\log^2(\tfrac{e}{t}).\label{eq:g:w:local}
    \end{align}
    The claim follows from combining \eqref{eq:g:w:global} and \eqref{eq:g:w:local}.
\end{proof}

\begin{description}
    \item[\hypertarget{C4}{(C4)}] (First order asymptotic expansion of the heat kernel). There are smooth functions $u$ and $r_t$ on $\M\times \M$ and constants $c_4\in(0,1)$ and $C_4>0$ such that 
    \begin{align*}
        k_t(x,y)=g_t(x,y)(1+u(x,y)+r_t(x,y))
    \end{align*}
    with
    \begin{align*}
        |u(x,y)|\leq C_4d_{\M}^2(x,y)\quad\text{and}\quad|r_t(x,y)|\leq C_4t
    \end{align*}
    for all $x,y\in\M$ with $d_{\M}(x,y)<c_4$ and all $t\in(0,1]$.
\end{description}
If $x$ and $y$ belong to different components, then $k_t(x,y)=g_t(x,y)=0$ and the above condition is trivially satisfied with $u(x,y)=r_t(x,y)=0$. If $x$ and $y$ belong to the same connected component, then (\hyperlink{C4}{C4}) is a consequence of the parametrix construction. See, e.g., Equation (45) in \cite{Chavel} (which is an improvement over Exercise 5 in \cite{Ros}) and also \cite{GK} for an upper bound on $u$.

\begin{description}
    \item[\hypertarget{C5}{(C5)}] (Refined global heat kernel). There are constants $c_5\in(0,1/4]$ and $C_5>0$ such that  
    \begin{align*}
        k_t(x,y)\leq \frac{C_5}{t^{d/2}}\exp\Big(-c_5\frac{d^2_{\M}(x,y)}{t}\Big)
    \end{align*}
    for all $x,y\in\M$ and all $t\in(0,1]$.
\end{description}

\begin{lemma}\label{lem:approx:g:k}
    Suppose that (\hyperlink{C4}{C4}) and (\hyperlink{C5}{C5}) are satisfied. Let $K\in\N$. Then there exist  constants $C>0$ and $c\in(0,1)$ depending only on $c_4,c_5,C_4,C_5$, $d$ and $K$ such that 
    \begin{align*}
        |k_t(x,y)-g_t(x,y)|\leq Ck_t(x,y)t\log(\tfrac{e}{t})+Ct^K
    \end{align*}
    for all $x,y\in\M$ and all $t\in(0,c)$.
\end{lemma}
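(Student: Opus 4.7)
I would mimic the structure of the proof of Lemma \ref{lem:approx:g:w}, splitting on whether the intrinsic distance $d_{\mathcal{M}}(x,y)$ is small or large compared to a carefully chosen threshold $\delta(t)$. The crucial difference is that here the asymptotic expansion (\hyperlink{C4}{C4}) is only local (and $r_t$ carries a factor $t$ rather than $t \log^2(e/t)$), while the tail control must be obtained from (\hyperlink{C5}{C5}) applied to $k_t$ directly rather than from the explicit Gaussian form.

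Concretely, I would set
\[
\delta(t) = \sqrt{\tfrac{K+d/2}{c_5}\, t \log(e/t)},
\]
and choose $c \in (0,1)$ small enough (depending on $c_4, c_5, K, d$) so that $\delta(t) < c_4$ whenever $t \in (0,c)$. In the \emph{far regime} $d_{\mathcal{M}}(x,y) > \delta(t)$, I would apply (\hyperlink{C5}{C5}) to get
\[
k_t(x,y) \leq \frac{C_5}{t^{d/2}} \exp\!\Bigl(-c_5 \frac{\delta(t)^2}{t}\Bigr) = C_5\, t^{-d/2} (t/e)^{K+d/2} \leq C t^K,
\]
and then, since $c_5 \leq 1/4$, the same computation applied to $g_t$ (which has the larger constant $1/(4t)$ in the exponent) yields $g_t(x,y) \leq C t^K$ as well, so $|k_t(x,y) - g_t(x,y)| \leq C t^K$.

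In the \emph{near regime} $d_{\mathcal{M}}(x,y) \leq \delta(t) < c_4$, condition (\hyperlink{C4}{C4}) gives $k_t = g_t(1+u+r_t)$ with
\[
|u(x,y) + r_t(x,y)| \leq C_4 d_{\mathcal{M}}^2(x,y) + C_4 t \leq C_4 \delta(t)^2 + C_4 t \leq C\, t \log(\tfrac{e}{t}),
\]
so that $|k_t(x,y) - g_t(x,y)| = g_t(x,y)|u+r_t| \leq C g_t(x,y)\, t \log(e/t)$. To finish I would, after shrinking $c$ once more so that $|u+r_t| \leq 1/2$ on this regime, deduce $g_t \leq 2 k_t$ from the same factorization, converting the bound into $|k_t - g_t| \leq 2C k_t\, t \log(e/t)$. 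Combining the two regimes gives the claim.

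The one place that needs a little care is the \emph{conversion} $g_t \leq 2 k_t$ in the near regime, since ostensibly one only has $k_t \leq 2 g_t$ for free; this is why the threshold $\delta(t)$ must be chosen small enough that the local error $u + r_t$ is bounded by $1/2$, which in turn forces a constraint on $c$ through the estimate $C_4 \delta(t)^2 + C_4 t \leq 1/2$. Everything else is a routine adaptation of Lemma \ref{lem:approx:g:w}, the only substantive change being that (\hyperlink{C4}{C4}) produces a first-order expansion with error $O(t \log(e/t))$ in this range, rather than the $O(t \log^2(e/t))$ one obtains when directly comparing $w_t$ and $g_t$ via (\hyperlink{C3}{C3}).
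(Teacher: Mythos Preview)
Your proposal is correct and follows essentially the same route as the paper: identical threshold $\delta(t)=\sqrt{\tfrac{K+d/2}{c_5}t\log(e/t)}$, the same far/near split using (\hyperlink{C5}{C5}) and (\hyperlink{C4}{C4}) respectively, and the same conversion $g_t\leq 2k_t$ in the near regime via the bound $|u+r_t|\leq 1/2$. The paper phrases the near-regime step as a two-sided sandwich $(1-Ct\log(e/t))k_t\leq g_t\leq (1+2Ct\log(e/t))k_t$, but this is only a cosmetic difference from your argument.
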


\begin{proof}
    Let $t\in(0,1]$ be such that 
    \begin{align*}
        \delta(t):=\sqrt{\frac{(K+d/2)}{c_5}t\log(\tfrac{e}{t})}<c_4.
    \end{align*}
    Let $x,y\in\M$ be arbitrary. If $d_{\M}(x,y)>\delta(t)$, then (\hyperlink{C5}{C5}) yields
    \begin{align}
        k_t(x,y)\leq C_5t^{-d/2}e^{-c_5\delta^2(t)/t}\leq Ct^{-d/2}t^{K+d/2}=Ct^K\label{eq:g:k:global:1}
    \end{align}
    and, since $c_5<1/4$,
    \begin{align}\label{eq:g:k:global:2}
        g_t(x,y)\leq (4\pi t)^{-d/2}e^{-\delta^2(t)/(4t)}\leq Ct^{-d/2}t^{K+d/2}=Ct^K.
    \end{align}
    On the other hand, if $d_{\M}(x,y)\leq \delta(t)$, then (\hyperlink{C4}{C4}) yields
    \begin{align*}
        k_t(x,y)&=(1+u(x,y)+r_t(x,y))\\
        &\leq g_t(x,y)(1+C_4\delta^2(t)+C_4t)\\
        &\leq g_t(x,y)(1+Ct\log(\tfrac{e}{t})) 
    \end{align*}
    and similarly 
    \begin{align*}
        k_t(x,y)&\geq g_t(x,y)(1-Ct\log(\tfrac{e}{t})).
    \end{align*}
    Hence, if additionally $Ct\log(\tfrac{e}{t})\leq 1/2$ holds, then 
    \begin{align*}
        \frac{1}{1+Ct\log(\tfrac{e}{t})}k_t(x,y)\leq g_t(x,y)\leq \frac{1}{1-Ct\log(\tfrac{e}{t})}k_t(x,y)
    \end{align*}
    and, by elementary inequalities,
     \begin{align}\label{eq:g:k:local}
        (1-Ct\log(\tfrac{e}{t}))k_t(x,y)\leq g_t(x,y)\leq (1+2Ct\log(\tfrac{e}{t}))k_t(x,y).
    \end{align}
    The claim follows from \eqref{eq:g:k:global:1}, \eqref{eq:g:k:global:2} and \eqref{eq:g:k:local}.
\end{proof}

\begin{corollary}\label{lem:approx:k:w} 
    Suppose that (\hyperlink{C3}{C3}), (\hyperlink{C4}{C4}) and (\hyperlink{C5}{C5}) are satisfied. Let $K\in\N$. Then there exist constants $C>0$ and $c\in(0,1)$ depending only on $c_3,c_4,c_5,C_3,C_4,C_5$, $d$ and $K$ such that 
    \begin{align*}
        |k_t(x,y)-w_t(x,y)|\leq Ck_t(x,y)t\log^2(\tfrac{e}{t})+Ct^K
    \end{align*}
    for all $x,y\in\M$ and all $t\in(0,c)$.
\end{corollary}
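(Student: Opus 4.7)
The plan is to insert the geodesic kernel $g_t$ as an intermediate object and apply the triangle inequality, using the two preceding lemmas. By Lemma \ref{lem:approx:g:k} and Lemma \ref{lem:approx:g:w}, we obtain, for $t$ small enough (depending on $c_3,c_4,c_5,C_3,C_4,C_5,d,K$),
\begin{align*}
    |k_t(x,y)-w_t(x,y)| \leq |k_t(x,y)-g_t(x,y)|+|g_t(x,y)-w_t(x,y)| \\
    \leq C k_t(x,y) t\log(\tfrac{e}{t}) + C g_t(x,y) t\log^2(\tfrac{e}{t}) + C t^K.
\end{align*}
Since the target bound is stated in terms of $k_t$, the remaining step is to replace $g_t$ on the right-hand side by a constant multiple of $k_t$ (plus an $O(t^K)$ remainder).

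To this end, I would revisit the proof of Lemma \ref{lem:approx:g:k}. In the local regime $d_\M(x,y)\leq \delta(t)$, that lemma actually produces the two-sided comparison $g_t(x,y)\leq (1+2Ct\log(e/t))k_t(x,y)$, which gives $g_t(x,y)\leq 2 k_t(x,y)$ once $t$ is small enough that $Ct\log(e/t)\leq 1/2$. In the global regime $d_\M(x,y)>\delta(t)$, the same proof showed $g_t(x,y)\leq Ct^K$ directly from (\hyperlink{C3}{C3}) and the Gaussian tail. Putting these two cases together, for all $x,y\in\M$ and all sufficiently small $t$,
\begin{align*}
    g_t(x,y)\leq 2k_t(x,y)+Ct^K.
\end{align*}

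Substituting this into the triangle inequality estimate yields
\begin{align*}
    |k_t(x,y)-w_t(x,y)| \leq C k_t(x,y) t\log^2(\tfrac{e}{t}) + C t^{K+1}\log^2(\tfrac{e}{t}) + C t^K,
\end{align*}
and shrinking $c$ if necessary so that $t\log^2(e/t)\leq 1$ on $(0,c)$ absorbs the middle term into the last. This yields the claimed bound. The argument is essentially bookkeeping; I do not expect any obstacle, since the main technical work (the comparisons $g_t$ vs.\ $w_t$ and $g_t$ vs.\ $k_t$, and the two-sided control of their ratio) has already been carried out in Lemmas \ref{lem:approx:g:w} and \ref{lem:approx:g:k}. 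The only mild point of care is that the bound on the right-hand side must feature $k_t$ rather than $g_t$, which is why the two-sided estimate in the local regime of Lemma \ref{lem:approx:g:k} is needed.
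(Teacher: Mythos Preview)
Your proposal is correct and matches the paper's approach exactly: the paper's proof is simply ``Follows from Lemmas \ref{lem:approx:g:w} and \ref{lem:approx:g:k},'' and you have spelled out the triangle inequality through $g_t$ together with the conversion $g_t\leq 2k_t+Ct^K$ that is implicit in those lemmas. One tiny quibble: in the global regime of Lemma \ref{lem:approx:g:k}, the bound $g_t(x,y)\leq Ct^K$ comes from the definition of $g_t$ and the assumption $c_5\leq 1/4$ (cf.\ \eqref{eq:g:k:global:2}), not from (\hyperlink{C3}{C3}).
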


\begin{proof}
    Follows from Lemmas \ref{lem:approx:g:w} and \ref{lem:approx:g:k}.
\end{proof}

We now state the main result of this section. 

\begin{thm}\label{prop:approx:error}
    Suppose that (\hyperlink{C3}{C3}), (\hyperlink{C4}{C4}) and (\hyperlink{C5}{C5}) are satisfied. Let $K\in\N$. Then there exist constants $C>0$ and $c\in(0,1)$ depending only on $c_3,c_4,c_5,C_3,C_4,C_5$, $d$ and $K$ such that 
    \begin{align*}
        |\langle u,\mathscr{L}_{k_t}u\rangle-\langle u,\mathscr{L}_{w_t}u\rangle|\leq Ct\log^2(\tfrac{e}{t})(\langle u,\mathscr{L}_{k_t}u\rangle+t^{K}\|u\|_2^2)
    \end{align*}
    for all $ u\in\R^n$ and all $t\in (0,c)$.
\end{thm}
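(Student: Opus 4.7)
The plan is to reduce everything to the pointwise kernel comparison in Corollary \ref{lem:approx:k:w} by means of the Dirichlet form representation \eqref{eq:key:identity:graph:Laplacian}. Applied to both kernels, that identity gives
\begin{align*}
\langle u,\mathscr{L}_{k_t}u\rangle=\frac{1}{2nt}\sum_{i,j=1}^n k_t(X_i,X_j)(u_i-u_j)^2,\quad
\langle u,\mathscr{L}_{w_t}u\rangle=\frac{1}{2nt}\sum_{i,j=1}^n w_t(X_i,X_j)(u_i-u_j)^2,
\end{align*}
so the difference is
\begin{align*}
\langle u,\mathscr{L}_{k_t}u\rangle-\langle u,\mathscr{L}_{w_t}u\rangle=\frac{1}{2nt}\sum_{i,j=1}^n\bigl(k_t(X_i,X_j)-w_t(X_i,X_j)\bigr)(u_i-u_j)^2.
\end{align*}
This is the key structural observation: the Dirichlet-form identity shows that the operator-level error between $\mathscr{L}_{k_t}$ and $\mathscr{L}_{w_t}$ is controlled in a weighted fashion by the pointwise kernel error, with the quadratic weights $(u_i-u_j)^2$ that naturally reappear in $\langle u,\mathscr{L}_{k_t}u\rangle$.

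Next I would invoke Corollary \ref{lem:approx:k:w}, but with $K$ replaced by $K+2$ (this is legitimate since that corollary holds for \emph{every} $K\in\N$), yielding
\begin{align*}
|k_t(x,y)-w_t(x,y)|\le C\,k_t(x,y)\,t\log^2(\tfrac{e}{t})+Ct^{K+2}
\end{align*}
uniformly for $x,y\in\M$ and $t\in(0,c)$. Substituting this pointwise bound into the sum and splitting accordingly gives
\begin{align*}
|\langle u,\mathscr{L}_{k_t}u\rangle-\langle u,\mathscr{L}_{w_t}u\rangle|\le Ct\log^2(\tfrac{e}{t})\cdot\langle u,\mathscr{L}_{k_t}u\rangle+\frac{Ct^{K+1}}{2n}\sum_{i,j=1}^n(u_i-u_j)^2,
\end{align*}
where the first term is produced by recognising that the $k_t$-weighted part of the sum is exactly $2nt\langle u,\mathscr{L}_{k_t}u\rangle$.

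To finish I would use the elementary combinatorial identity
\begin{align*}
\sum_{i,j=1}^n(u_i-u_j)^2=2n\|u\|_2^2-2\Bigl(\sum_{i=1}^n u_i\Bigr)^2\le 2n\|u\|_2^2,
\end{align*}
which bounds the second term by $Ct^{K+1}\|u\|_2^2\le Ct\log^2(\tfrac{e}{t})\cdot t^K\|u\|_2^2$ (using $t\le 1$, so $\log^2(e/t)\ge 1$). Combining the two terms gives the stated inequality. There is no real obstacle here; the only subtlety is to notice that the rough $t^K$-term emerging from Corollary \ref{lem:approx:k:w} must be absorbed into the factor $t\log^2(e/t)\cdot t^K\|u\|_2^2$ on the right-hand side, which is why one applies the corollary with exponent $K+2$ rather than $K$.
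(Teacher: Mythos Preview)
Your proposal is correct and follows essentially the same approach as the paper: both apply the Dirichlet-form identity \eqref{eq:key:identity:graph:Laplacian}, invoke Corollary \ref{lem:approx:k:w} with exponent $K+2$, and bound the residual double sum $\sum_{i,j}(u_i-u_j)^2$ by $2n\|u\|_2^2$. The only cosmetic differences are that the paper uses the cruder inequality $(u_i-u_j)^2\le 2(u_i^2+u_j^2)$ in place of your exact identity, and leaves the final absorption $t^{K+1}\le t\log^2(e/t)\,t^K$ implicit.
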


\begin{proof}
    Let $c,C>0$ be constants such that the conclusion of Corollary \ref{lem:approx:k:w} holds with $K+2$. Let $t\in(0,c)$ and let $u\in\R^n$. Then
    \begin{align*}
        &|\langle u, (D_{k_t}-K_t)u\rangle-\langle u, (D_{w_t}-W_t)u\rangle|\\
        &=\Big|\frac{1}{2n}\sum_{i=1}^n\sum_{j=1}^nk_t(X_i,X_j)(u_i-u_j)^2-\frac{1}{2n}\sum_{i=1}^n\sum_{j=1}^nw_t(X_i,X_j)(u_i-u_j)^2\Big|\\
        &\leq \frac{1}{2n}\sum_{i=1}^n\sum_{j=1}^n|k_t(X_i,X_j)-w_t(X_i,X_j)|(u_i-u_j)^2\\
        &\leq \frac{1}{2n}\sum_{i=1}^n\sum_{j=1}^nCk_t(X_i,X_j)t\log^2(\tfrac{e}{t})(u_i-u_j)^2+\frac{1}{2n}\sum_{i=1}^n\sum_{j=1}^nCt^{K+2}(u_i-u_j)^2,
    \end{align*}
    where we applied \eqref{eq:key:identity:graph:Laplacian} (to both kernels $w_t$ and $k_t$) and Corollary \ref{lem:approx:k:w}. Hence, we arrive at 
       \begin{align*}
        &|\langle u, (D_{k_t}-K_t)u\rangle-\langle u, (D_{w_t}-W_t)u\rangle|\\
        &\leq Ct\log^2(\tfrac{e}{t})\langle u, (D_{k_t}-K_t)u\rangle+C\frac{t^{K+2}}{n}\sum_{i=1}^n\sum_{j=1}^n(u_i^2+u_j^2)\\
        &=Ct\log^2(\tfrac{e}{t})\langle u, (D_{k_t}-K_t)u\rangle+2Ct^{K+2}\|u\|_2^2.
    \end{align*}
    Dividing through $t$, the claim follows. 
\end{proof}

\begin{corollary}\label{corollary:relative:perturbation:graph:Laplacian}
    Under the assumptions of Theorem \ref{prop:approx:error}, we have
    \begin{align*}
        \|(\mathscr{L}_{k_t}+t^KI_n)^{-1/2}(\mathscr{L}_{w_t}-\mathscr{L}_{k_t})(\mathscr{L}_{k_t}+t^KI_n)^{-1/2}\|_\infty\leq Ct\log^2(\tfrac{e}{t})
    \end{align*}
    for all $t\in(0,c)$.
\end{corollary}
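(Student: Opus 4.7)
The plan is to recognize that this corollary is a direct reformulation of Theorem \ref{prop:approx:error} via the variational characterization of the operator norm of a self-adjoint operator, with $\mathscr{L}_{k_t}+t^K I_n$ playing the role of a preconditioner.

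First I would verify that $\mathscr{L}_{k_t}+t^K I_n$ is strictly positive definite, hence invertible with a well-defined positive square root. This is immediate: the key identity \eqref{eq:key:identity:graph:Laplacian} applied to the non-negative kernel $k_t$ shows that $\mathscr{L}_{k_t}$ is positive semi-definite, and adding $t^K I_n$ (with $t>0$) makes the sum strictly positive. The same identity shows $\mathscr{L}_{w_t}$ and $\mathscr{L}_{k_t}$ are symmetric, so $\mathscr{L}_{w_t}-\mathscr{L}_{k_t}$ is self-adjoint, and conjugation by $(\mathscr{L}_{k_t}+t^K I_n)^{-1/2}$ preserves self-adjointness. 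Therefore the operator norm in question equals the supremum of absolute values of Rayleigh quotients.

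Next, I would perform the substitution $v=(\mathscr{L}_{k_t}+t^K I_n)^{1/2}u$, which is a bijection of $\R^n$. Under this change of variable,
\[
\langle v,(\mathscr{L}_{k_t}+t^K I_n)^{-1/2}(\mathscr{L}_{w_t}-\mathscr{L}_{k_t})(\mathscr{L}_{k_t}+t^K I_n)^{-1/2}v\rangle=\langle u,(\mathscr{L}_{w_t}-\mathscr{L}_{k_t})u\rangle
\]
while $\|v\|_2^2=\langle u,(\mathscr{L}_{k_t}+t^K I_n)u\rangle$. Consequently,
\[
\|(\mathscr{L}_{k_t}+t^K I_n)^{-1/2}(\mathscr{L}_{w_t}-\mathscr{L}_{k_t})(\mathscr{L}_{k_t}+t^K I_n)^{-1/2}\|_\infty=\sup_{u\neq 0}\frac{|\langle u,(\mathscr{L}_{w_t}-\mathscr{L}_{k_t})u\rangle|}{\langle u,\mathscr{L}_{k_t}u\rangle+t^K\|u\|_2^2}.
\]

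The bound now follows immediately by applying Theorem \ref{prop:approx:error}, whose right-hand side is precisely $Ct\log^2(e/t)$ times the denominator above. There is no genuine obstacle here: all the analytic content (the asymptotic comparison between $k_t$ and $w_t$ in Corollary \ref{lem:approx:k:w} and the Dirichlet form manipulation) has already been absorbed into Theorem \ref{prop:approx:error}, and what remains is only the elementary observation that a quadratic-form inequality of the form $|\langle u, Bu\rangle|\leq \alpha\,\langle u, Au\rangle$ with $A\succ 0$ self-adjoint and $B$ self-adjoint translates into $\|A^{-1/2}BA^{-1/2}\|_\infty\leq \alpha$.
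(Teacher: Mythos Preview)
Your proposal is correct and is exactly what the paper intends: the corollary is stated without proof immediately after Theorem~\ref{prop:approx:error}, and the passage from the quadratic-form inequality to the preconditioned operator-norm bound via the substitution $v=(\mathscr{L}_{k_t}+t^K I_n)^{1/2}u$ is the only step needed.
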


\section{Additional proofs}
\subsection{Proof of Corollary \ref{thm:thm1}}

We proceed as in the proof map given in Section \ref{sec:proof:map}. The claim follows from the following three steps and the triangle inequality. Recall that we say that an event holds with high probability, if it holds with probability at least $1-Cn^{-c}$, where $c,C>0$ are constants depending only on the constants from (\hyperlink{C1}{C1}), (\hyperlink{C3}{C3}), (\hyperlink{C4}{C4}) and (\hyperlink{C5}{C5}). In the following, we often make use of the fact that if two events hold with high probability, their union also holds with high probability.

\paragraph{Reduction to heat kernel PCA.} We have
\begin{align*}
    \forall 1\leq j\leq n,\qquad \Big|\lambda_j\Big(\frac{I-K_n}{t}\Big)-\mu_j\Big|\leq C\Big(\sqrt{\frac{\log n}{n t^{d/2+2}}}+\mu_j^2t\Big)
\end{align*}
with high probability.

\begin{proof}
    By Theorem \ref{prop:heat:PCA:eigenvalue}, we have
    \begin{align*}
        \forall j\in\N,\qquad \Big|\frac{1-\hat\lambda_{t,j}}{t}-\mu_j\Big|\leq C\sqrt{\frac{\log n}{n t^{d/2+2}}}+\frac{\mu_j^2t}{2}
    \end{align*}
    with high probability. Hence, the claim follow from the fact that the first $n$ eigenvalues of $\hat\Sigma_t$ coincide with those of $K_t$.
\end{proof}

\paragraph{Reduction to the heat graph Laplacian.}  We have
\begin{align}\label{eq:reduction:heat:graph:Laplacian}
    \forall 1\leq j\leq n,\qquad \Big|\lambda_j\Big(\frac{I_n-K_t}{t}\Big)-\lambda_j(\mathscr{L}_{k_t})\Big|\leq C\sqrt{\frac{\log n}{n t^{d/2+2}}}
\end{align}
with high probability.
\begin{proof}
    By Weyl's bound, we have
    \begin{align}\label{eq:Weyl:diagonal:perturbation}
        \Big|\lambda_j\Big(\frac{I_n-K_t}{t}\Big)-\lambda_j(\mathscr{L}_{k_t})\Big|\leq  \frac{1}{t}\|I_n-D_{k_t}\|_{\infty},
    \end{align}
    with diagonal matrix $I_n-D_{k_t}$ such that          
    \begin{align*}
        \|I_n-D_{k_t}\|_{\infty}=\max_{1\leq i\leq n}\Big|\frac{1}{n}\sum_{j=1}^nk_t(X_i,X_j)-1\Big|.
    \end{align*}
    Fix $1\leq i\leq n$. Then
    \begin{align*}
        \Big|\frac{1}{n}\sum_{j=1}^n(k_t(X_i,X_j)-1)\Big|\leq \frac{|k_t(X_i,X_i)-1|}{n}+\Big|\frac{1}{n}\sum_{j=1, j\neq i}^n(k_t(X_i,X_j)-1)\Big|.
    \end{align*}
    From (\hyperlink{C1}{C1}) it follows that
    \begin{align*}
        k_t(X_i,X_i)\leq C_1t^{-d/2}
    \end{align*}
    almost surely and thus
    \begin{align*}
        \frac{|k_t(X_i,X_i)-1|}{n}\leq \frac{C_1t^{-d/2}+1}{n}\leq C\frac{1}{nt^{d/2}}.
    \end{align*}
    Moreover for all $j\neq i$, we have
    \begin{align*}
        k_t(X_i,X_j)\leq \sqrt{k_t(X_i,X_i)\cdot k_t(X_j,X_j)}\leq C_1t^{-d/2}
    \end{align*}
    almost surely and 
    \begin{align*}
        \E_{X_j}(k_t(X_i,X_j)-1)^2\leq \E_{X_j}k_t(X_i,X_j)^2\leq C_1t^{-d/2}\E_{X_j}k_t(X_i,X_j)=C_1t^{-d/2},
    \end{align*}
    where $\E_{X_j}$ denotes expectation with respect to $X_j$ only. Hence, an application of Bernstein's inequality (conditional on $X_i$) yields
    \begin{align*}
        \PP\Big(\Big|\frac{1}{n}\sum_{j=1, j\neq i}^n(k_t(X_i,X_j)-1)\Big|\geq u\Big)\leq 2\exp(cnt^{d/2}\min(u,u^2))
    \end{align*}
    for all $u>0$. Setting 
    $$u=C\cdot\max\Big(\sqrt{\frac{\log n}{nt^{d/2}}},\frac{\log n}{nt^{d/2}}\Big)\leq C\sqrt{\frac{\log n}{nt^{d/2}}},$$
    with $C$ sufficiently large, we obtain
    \begin{align}\label{eq:conc:diagonal:perturbation}
         \|I_n-D_{k_t}\|_{\infty}\leq C\sqrt{\frac{\log n}{nt^{d/2}}}
    \end{align}
    with high probability, where we also applied the union bound. Inserting this into \eqref{eq:Weyl:diagonal:perturbation}, the claim follows.
\end{proof}

\paragraph{Reduction to the Gaussian graph Laplacian.}  We have
\begin{align*}
    \forall 1\leq j\leq n,\qquad |\lambda_j(\mathscr{L}_{k_t})-\lambda_j(\mathscr{L}_{w_t})|\leq C\Big(\sqrt{\frac{\log n}{n t^{d/2+2}}}+(\mu_j^2\vee 1)t\log^2(\tfrac{e}{t})\Big).
\end{align*}
with high probability.

\begin{proof}
For $K\in \N$ let
\begin{align*}
    \tilde{\mathscr{L}}_{k_t}=\mathscr{L}_{k_t}+t^KI_n,\qquad \tilde{\mathscr{L}}_{w_t}=\mathscr{L}_{w_t}+t^KI_n.
\end{align*}
Then $\tilde{\mathscr{L}}_{k_t}$ and $\tilde{\mathscr{L}}_{w_t}$ are strictly positive and Corollary \ref{corollary:relative:perturbation:graph:Laplacian} yields
\begin{align*}
        \|\tilde{\mathscr{L}}_{k_t}^{-1/2}(\tilde{\mathscr{L}}_{w_t}-\tilde{\mathscr{L}}_{k_t})\tilde{\mathscr{L}}_{k_t}^{-1/2}\|_\infty\leq Ct\log^2(\tfrac{e}{t})
    \end{align*}
for all $t\in(0,c)$. An application of Lemma \ref{lemma:graph:Laplace:version:Weyl:bound} yields 
\begin{align*}
    |\lambda_j(\mathscr{L}_{k_t})-\lambda_j(\mathscr{L}_{w_t})|&=|\lambda_j(\tilde{\mathscr{L}}_{k_t})-\lambda_j(\tilde{\mathscr{L}}_{w_t})|\leq  C\lambda_j(\tilde{\mathscr{L}}_{k_t}) t\log^2(\tfrac{e}{t})
\end{align*}
for all $t\in(0,c)$ and all $1\leq j\leq n$. By the previous two steps this implies that 
\begin{align*}
    |\lambda_j(\mathscr{L}_{k_t})-\lambda_j(\mathscr{L}_{w_t})|&\leq C\Big(\mu_j+\mu_j^2t+t^K+\sqrt{\frac{\log n}{n t^{d/2+2}}}\Big)t\log^2(\tfrac{e}{t})\\
    &\leq C\Big(\mu_jt\log^2(\tfrac{e}{t})+(\mu_j^2\vee 1)t+\sqrt{\frac{\log n}{n t^{d/2+2}}}\Big)
\end{align*}
for all $t\in(0,c)$ and all $1\leq j\leq n$.
\end{proof}

\subsection{Proof of Corollary \ref{thm:thm2}}

We proceed as in the proof map given in Section \ref{sec:proof:map}. In particular, the claim follows from the following steps and the triangle inequality.

\paragraph{Preliminaries.} Recall that we write $\lambda_{t,k}=e^{-\mu_kt}$ for all $k\in\N$. For all $k\in \N$, we have 
\begin{align*}
    \|S_n\phi_k\|_2^2=\langle \phi_k,\hat\Sigma_{t}\phi_k\rangle_{\HH_t}&\leq \hat\lambda_{t,k}\cdot\|\phi_k\|_{\HH_t}^2\\
    &\leq (1+\|\hat\Sigma_t-\Sigma_t\|_{\infty})\cdot e^{\mu_k t},
\end{align*}
where we used Weyl's bound and the fact that $\lambda_{t,k}\leq 1$ for all $k\in\N$ in the last inequality. Hence, by Lemma \ref{lem:conc:eco:absolute} and the assumptions in Corollary \ref{thm:thm2}, we have for all $1\leq k\leq j$,
\begin{align*}
    \|S_n\phi_k\|_2^2\leq \Big(1+C\sqrt{\frac{\log n}{nt^{d/2}}}\Big)e^{\mu_kt}\leq Ce^{\mu_kt}\leq Ce
\end{align*}
with high probability. Consequently, we may assume in the following that
\begin{align}\label{ass:Theorem:2:proof}
    \sqrt{\frac{\log n}{nt^{d/2+2}}}+\mu_{j+1}t\log^2(\tfrac{e}{t})\leq c(\mu_{j+1}-\mu_j),
\end{align}
for some sufficiently small constant $c>0$ (to be determined in the following steps), since otherwise the claim follows from the fact that the inequality
\begin{align*}
    \inf_{O\in O(j)}&\|(S_n\phi_1,\dots, S_n\phi_j)-(u_1(\mathscr{L}_{w_t}),\dots,u_j(\mathscr{L}_{w_t}))O\|_2\\
    &\leq \|(S_n\phi_1,\dots, S_n\phi_j)\|_2+\|(u_1(\mathscr{L}_{w_t}),\dots,u_j(\mathscr{L}_{w_t}))\|_2\\
    &=\Big(\sum_{k=1}^j\|S_n\phi_k\|_2^2\Big)^{1/2}+\sqrt{j}\leq C\sqrt{j}
\end{align*}
holds with high probability. Let us also collect some perliminary eigenvalue bounds. First, with high probability, we have
\begin{align}
    \hat\lambda_{t,j}\geq \lambda_{t,j}-\|\hat\Sigma_t-\Sigma_t\|_{\infty}&\geq e^{-\mu_jt}-C\sqrt{\frac{\log n}{nt^{d/2}}}\nonumber\\
    & \geq e^{-1}-C\sqrt{\frac{\log n}{nt^{d/2}}}\geq \frac{1}{2e},\label{eq:eigenvalue:positive}
\end{align}
provided that $c>0$ from the assumptions is chosen small enough (such that $c\leq 1/(2Ce)$), meaning that $\hat\lambda_{t,1}\geq \cdots\geq \hat\lambda_{t,j}>0$ with high probability. Moreover, with high probability, we have
\begin{align*}
    |1-\hat\lambda_{t,k}|\leq 1-e^{-\mu_k t}+\|\hat\Sigma_t-\Sigma_t\|_{\infty}\leq 1-e^{-\mu_k t}+C\sqrt{\frac{\log n}{nt^{d/2}}}\leq c<1
\end{align*}
for all $1\leq k\leq j$.
Hence, with high probability, we have
\begin{align}
    |\lambda_{t,k}^{-1/2}-1|&\leq C|1-\lambda_{t,k}|\leq Ct\mu_k\nonumber\\
    |\hat \lambda_{t,k}^{-1/2}-1|&\leq C|1-\hat\lambda_{t,k}|\leq C\Big(t\mu_k+\sqrt{\frac{\log n}{nt^{d/2}}}\Big)\label{eq:prelimiary:proof:Thm:2}
\end{align}
for all $1\leq k\leq j$, where we also used that $|(1-x)^{-1/2}-1|\leq C|x|$ for all $|x|\leq c<1$.

\paragraph{Reduction to heat kernel PCA.}  We have
\begin{align*}
        \inf_{O\in O(j)}&\|(S_n\phi_1,\dots, S_n\phi_j)-(\hat v_{t,1},\dots,\hat v_{t,j})O\|_2\\
        &\leq C\Big(\frac{\sqrt{j}}{\mu_{j+1}-\mu_j}\sqrt{\frac{\log n}{nt^{d/2+2}}}+\sqrt{j}\mu_{j}t\Big)
    \end{align*}
    with high probability.
\begin{proof}
    If $h_1,\dots,h_j$ are elements in $\HH_t$, then we consider $(h_k)_{k=1}^j=(h_1,\dots,h_j)$ as a linear map from $\R^j$ to $\HH_t$, mapping $\alpha\in\R^j$ to $\sum_{k=1}^j\alpha_kh_k$. Now, on the one hand, we have $\phi_k=\lambda_{t,k}^{-1/2}u_{t,k}$ for all $k\in\N$. On the other hand, \eqref{eq:eigenvalue:positive} holds with high probability, implying that $\hat v_{t,k}=\hat\lambda_{t,k}^{-1/2}S_n \hat u_{t,k}$ for all $1\leq k\leq j$ by \eqref{eq:PC:formula}. We estimate with high probability
\begin{align*}
    \inf_{O\in O(j)}&\|(S_n\phi_1,\dots, S_n\phi_j)-(\hat v_{t,1},\dots,\hat v_{t,j})O\|_2\\
    &=\inf_{O\in O(j)}\|S_n(\lambda_{t,k}^{-1/2}u_{t,k})_{k=1}^j-S_n(\hat \lambda_{t,k}^{-1/2}\hat u_{t,k})_{k=1}^jO\|_2\\
    &\leq \|S_n\|_\infty\cdot\inf_{O\in O(j)}\|(\lambda_{t,k}^{-1/2}u_{t,k})_{k=1}^j-(\hat \lambda_{t,k}^{-1/2}\hat u_{t,k})_{k=1}^jO\|_2\\
    &\leq C\cdot\inf_{O\in O(j)}\|(u_{t,k})_{k=1}^j-(\hat u_{t,k})_{k=1}^jO\|_2\\
    &+C\|((\lambda_{t,k}^{-1/2}-1)u_{t,k})_{k=1}^j\|_2+C\|((\hat\lambda_{t,k}^{-1/2}-1)\hat u_{t,k})_{k=1}^j\|_2\\
    &\leq C\|P_{t,\leq j}-\hat P_{t,\leq j}\|_2\\
    &+C\sqrt{j}\max_{1\leq k\leq j}|\lambda_{t,k}^{-1/2}-1|+C\sqrt{j}\max_{1\leq k\leq j}|\hat \lambda_{t,k}^{-1/2}-1|,
\end{align*}
where we applied the fact that $\|S_n\|_\infty^2=\hat\lambda_{t,1}\leq C$ with high probability in the second inequality and Lemma \ref{lemma:char:subspace:distance} in the last inequality. Inserting Theorem \ref{prop:heat:kernel:PCA:projector:DK},  \eqref{eq:prelimiary:proof:Thm:2}, \eqref{ass:Theorem:2:proof} and $\mu_{j+1}t\leq 1$ the claim follows.
\end{proof}

\paragraph{Reduction to the heat graph Laplacian.}  We have
\begin{align*}
        &\inf_{O\in O(j)}\|(\hat v_{t,1},\dots,\hat v_{t,j})-(u_1(\mathscr{L}_{k_t}),\dots,u_j(\mathscr{L}_{k_t}))O\|_2\\
        &\leq \|P_{\leq j}(\tfrac{1}{t}(I_n-K_t))-P_{\leq j}(\mathscr{L}_{k_t})\|_2\leq C\frac{1}{\mu_{j+1}-\mu_j}\sqrt{\frac{\log n}{nt^{d/2+2}}}.
    \end{align*}
    with high probability.
\begin{proof}
    The first equality holds by Lemma \ref{lemma:char:subspace:distance}. To see the second claim, first note that
    \begin{align*}
        \frac{1-\lambda_{t,j+1}}{t}-\frac{1-\lambda_{t,j}}{t}&=\frac{e^{-\mu_j t}-e^{-\mu_{j+1}t}}{t}\\
        &=\frac{e^{(\mu_{j+1}-\mu_j)t}-1}{te^{\mu_{j+1}t}} \geq \frac{(\mu_{j+1}-\mu_j)t}{te}=\frac{\mu_{j+1}-\mu_j}{e},
    \end{align*}
    where we applied the assumption $\mu_{j+1}t\leq 1$ in the inequality. Combining this with Weyl's bound and Lemma \ref{lem:conc:eco:absolute}, we arrive at 
    \begin{align*}
        &\lambda_{j+1}\Big(\frac{I_n-K_t}{t}\Big)-\lambda_{j}\Big(\frac{I_n-K_t}{t}\Big)=\frac{1-\hat\lambda_{t,j+1}}{t}-\frac{1-\hat\lambda_{t,j}}{t}\\
        &\geq \frac{1-\lambda_{t,j+1}}{t}-\frac{1-\lambda_{t,j}}{t}-\frac{2}{t}\|\hat\Sigma_t-\Sigma_t\|_{\infty}\\
        &\geq \frac{\mu_{j+1}-\mu_j}{e}-C\sqrt{\frac{\log n}{nt^{d/2+2}}}\\
        &\geq \frac{\mu_{j+1}-\mu_j}{2e},
    \end{align*}
    with high probability, provided that $c>0$ in \eqref{ass:Theorem:2:proof} is chosen small enough (such that $c\leq 1/(2Ce)$). Combining this with the Davis-Kahan bound in Corollary \ref{corollary:DK}, we get
    \begin{align}\label{eq:DK:diagonal:perturbation}
        \|P_{\leq j}(\tfrac{1}{t}(I_n-K_t))-P_{\leq j}(\mathscr{L}_{k_t})\|_2\leq C\sqrt{j}\frac{1}{t}\frac{\|I_n-D_{k_t}\|_{\infty}}{\mu_{j+1}-\mu_j}
    \end{align}
    with high probability. Inserting \eqref{eq:conc:diagonal:perturbation}, the claim follows.
\end{proof}

\paragraph{Reduction to the Gaussian graph Laplacian.}  We have
\begin{align*}
    \|P_{\leq j}(\mathscr{L}_{k_t})-P_{\leq j}(\mathscr{L}_{w_t})\|_2\leq C\sqrt{j}\frac{\mu_{j+1}}{\mu_{j+1}-\mu_j}t\log^2(\tfrac{e}{t})
\end{align*}
with high probability.
\begin{proof}
For $K\in \N$ let $\tilde{\mathscr{L}}_{k_t}=\mathscr{L}_{k_t}+t^KI_n$ and $\tilde{\mathscr{L}}_{w_t}=\mathscr{L}_{w_t}+t^KI_n$. Then $\tilde{\mathscr{L}}_{k_t}$ and $\tilde{\mathscr{L}}_{w_t}$ are strictly positive and Corollary  \ref{corollary:relative:perturbation:graph:Laplacian} yields
\begin{align*}
        \|\tilde{\mathscr{L}}_{k_t}^{-1/2}(\tilde{\mathscr{L}}_{w_t}-\tilde{\mathscr{L}}_{k_t})\tilde{\mathscr{L}}_{k_t}^{-1/2}\|_\infty\leq Ct\log^2(\tfrac{e}{t})
    \end{align*}
    for all $t\in(0,c)$. In order to apply Corollary \ref{corollary:DK}, it remains to upper bound
    \begin{align*}
        \frac{\lambda_{j+1}(\tilde{\mathscr{L}}_{k_t})}{\lambda_{j+1}(\tilde{\mathscr{L}}_{k_t})-\lambda_{j}(\tilde{\mathscr{L}}_{k_t})}.
    \end{align*}
    First, by Theorem \ref{prop:heat:PCA:eigenvalue}, \eqref{eq:reduction:heat:graph:Laplacian}, \eqref{ass:Theorem:2:proof} and \eqref{eq:Weyls:law:lower:bound}, we have, with high probability,
    \begin{align*}
        \lambda_{j+1}(\tilde{\mathscr{L}}_{k_t})&=\lambda_{j+1}(\mathscr{L}_{k_t})+t^K\\
        &\leq \frac{1-e^{-\mu_{j+1}t}}{t}+C\sqrt{\frac{\log n}{n t^{d/2+2}}}+t^K\\
        &\leq \mu_{j+1}+C\sqrt{\frac{\log n}{n t^{d/2+2}}}+t^K\leq C\mu_{j+1}
    \end{align*} 
    for all $t\in(0,c)$. Second, by \eqref{eq:conc:diagonal:perturbation} and \eqref{ass:Theorem:2:proof}, we have, with high probability,
    \begin{align*}
        \lambda_{j+1}(\tilde{\mathscr{L}}_{k_t})-\lambda_{j}(\tilde{\mathscr{L}}_{k_t})
        &=\lambda_{j+1}(\mathscr{L}_{k_t})-\lambda_{j}(\mathscr{L}_{k_t})\\
        &\geq \lambda_{j+1}\Big(\frac{I-K_t}{t}\Big)-\lambda_{j}\Big(\frac{I-K_t}{t}\Big)-\frac{2}{t}\|I_n-D_{k_t}\|_{\infty}\\
        &\geq \frac{\mu_{j+1}-\mu_j}{2e}-2C\sqrt{\frac{\log n}{nt^{d/2+2}}}\\
        &\geq \frac{\mu_{j+1}-\mu_j}{4e},
    \end{align*}
    provided that $c>0$ in \eqref{ass:Theorem:2:proof} is chosen small enough (such that $c\leq 1/(8Ce)$). Applying Corollary \ref{corollary:relative:perturbation:graph:Laplacian} conclude that
    \begin{align*}
        \|P_{\leq j}(\mathscr{L}_{k_t})&-P_{\leq j}(\mathscr{L}_{w_t})\|_2
        =\|P_{\leq j}(\tilde{\mathscr{L}}_{k_t})-P_{\leq j}(\tilde{\mathscr{L}}_{w_t})\|_2\\
        &\leq C\sqrt{j}\frac{\lambda_{j+1}(\tilde{\mathscr{L}}_{k_t})}{\lambda_{j+1}(\tilde{\mathscr{L}}_{k_t})-\lambda_{j}(\tilde{\mathscr{L}}_{k_t})}\|\tilde{\mathscr{L}_{k_t}}^{-1/2}(\tilde{\mathscr{L}}_{w_t}-\tilde{\mathscr{L}}_{k_t})\tilde{\mathscr{L}_{k_t}}^{-1/2}\|_\infty\\
        &\leq C\sqrt{j}\frac{\mu_{j+1}}{\mu_{j+1}-\mu_j}t\log^2(\tfrac{e}{t}).
    \end{align*}
    for all $t\in(0,c)$.
\end{proof}

\appendix

\section{Appendix}

\subsection{Perturbation bounds}

\subsubsection{Positive self-adjoint operators}\label{Sec:perturbation:self-adjoint:operators}

Let $\Sigma$ be a self-adjoint positive operator on a separable Hilbert space $\HH$. Let $\lambda_1\geq \lambda_2\geq \dots >0$ be the sequence of positive eigenvalues and let $u_1,u_2,\dots$ be a sequence of corresponding eigenvectors. We assume that $u_1,u_2,\dots$ form an orthonormal basis of $\HH$. For $j\in\N$, let
\begin{align*}
    P_{\leq j}=\sum_{k=1}^jP_k,\qquad P_k=u_k\otimes u_k
\end{align*}
be the orthogonal projection onto the eigenspace spanned by $u_1,\dots,u_j$. Let $\hat\Sigma$ be another self-adjoint positive operator on $\HH$. Let $\hat\lambda_1\geq \hat\lambda_2\geq \dots \geq 0$ be the sequence of non-negative eigenvalues and let $\hat u_1,\hat u_2,\dots$ be a sequence of corresponding eigenvectors. We also assume that $\hat u_1,\hat u_2,\dots$ form an orthonormal basis of $\HH$. For $j\in \N$, let
\begin{align*}
    \hat P_{\leq j}=\sum_{k=1}^j\hat P_k,\qquad \hat P_k=\hat u_k\otimes \hat u_k.
\end{align*}

The following lemma states some well-known absolute perturbation bounds for eigenvalues and eigenvectors (see, e.g., \cite{MR1477662}).

\begin{lemma}[Absolute Weyl and Davis-Kahan bound]\label{lem:absolute:Weyl:DK} For all $j\in\N$, we have
\begin{align*}
    |\hat\lambda_j-\lambda_j|&\leq \|\hat\Sigma-\Sigma\|_\infty,\\
    \|\hat P_{\leq j}-P_{\leq j}\|_{2}&\leq \sqrt{32j}\frac{\|\hat\Sigma-\Sigma\|_\infty}{\lambda_j-\lambda_{j+1}}.
\end{align*}
\end{lemma}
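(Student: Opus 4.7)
For the Weyl bound my plan is to invoke the Courant--Fischer min--max characterization
\begin{align*}
\lambda_j=\max_{V\subseteq \HH,\,\dim V=j}\ \min_{x\in V,\,\|x\|=1}\langle x,\Sigma x\rangle,
\end{align*}
and the analogous identity for $\hat\lambda_j$. Since $|\langle x,(\hat\Sigma-\Sigma)x\rangle|\le\|\hat\Sigma-\Sigma\|_\infty$ for every unit vector $x$, the two Rayleigh quotients differ uniformly by at most $\|\hat\Sigma-\Sigma\|_\infty$, and the eigenvalue bound follows immediately from the min--max representation.

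For the Davis--Kahan bound I would first reduce the difference of projections to a single off-diagonal block via the identity
\begin{align*}
\|\hat P_{\le j}-P_{\le j}\|_2^2=2\|\hat P_{\le j}P_{>j}\|_2^2,
\end{align*}
which is obtained from $\operatorname{tr}((\hat P_{\le j}-P_{\le j})^2)=2j-2\operatorname{tr}(\hat P_{\le j}P_{\le j})$ and $\operatorname{tr}(\hat P_{\le j}P_{\le j})=j-\|\hat P_{\le j}P_{>j}\|_2^2$, using that both spectral projectors have rank~$j$. Next I would expand in the eigenbases: since $\langle\hat u_k,(\hat\Sigma-\Sigma)u_l\rangle=(\hat\lambda_k-\lambda_l)\langle\hat u_k,u_l\rangle$, summing squares gives
\begin{align*}
\|\hat P_{\le j}(\hat\Sigma-\Sigma)P_{>j}\|_2^2=\sum_{k\le j,\,l>j}(\hat\lambda_k-\lambda_l)^2|\langle\hat u_k,u_l\rangle|^2\ge(\hat\lambda_j-\lambda_{j+1})^2\,\|\hat P_{\le j}P_{>j}\|_2^2,
\end{align*}
whenever the right-hand factor is positive. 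Combining this with the Hilbert--Schmidt/operator norm inequality
\begin{align*}
\|\hat P_{\le j}(\hat\Sigma-\Sigma)P_{>j}\|_2\le\|\hat P_{\le j}\|_2\,\|\hat\Sigma-\Sigma\|_\infty\,\|P_{>j}\|_\infty=\sqrt{j}\,\|\hat\Sigma-\Sigma\|_\infty
\end{align*}
yields a preliminary Davis--Kahan estimate with $\hat\lambda_j-\lambda_{j+1}$ in the denominator.

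To replace the perturbed gap by $\lambda_j-\lambda_{j+1}$ I would perform a two-case argument. In the regime $\|\hat\Sigma-\Sigma\|_\infty\le(\lambda_j-\lambda_{j+1})/4$, the Weyl bound already proven implies $\hat\lambda_j-\lambda_{j+1}\ge 3(\lambda_j-\lambda_{j+1})/4$, so chaining the displays above gives $\|\hat P_{\le j}-P_{\le j}\|_2\le(4\sqrt{2j}/3)\|\hat\Sigma-\Sigma\|_\infty/(\lambda_j-\lambda_{j+1})$, which is well within $\sqrt{32j}$. In the complementary regime $\|\hat\Sigma-\Sigma\|_\infty>(\lambda_j-\lambda_{j+1})/4$, the target right-hand side already exceeds $\sqrt{32j}/4=\sqrt{2j}$, while the trivial bound $\|\hat P_{\le j}-P_{\le j}\|_2^2\le 2j$ (immediate from $\operatorname{tr}(\hat P_{\le j})+\operatorname{tr}(P_{\le j})=2j$) closes the case.

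The main subtlety is ensuring that the case split is seamless and yields a single universal constant; the apparently wasteful factor $\sqrt{32j}$ is exactly what makes the two regimes glue together cleanly. Beyond that, every ingredient is elementary: Courant--Fischer, the trace identity for projection differences, and expansion in the two eigenbases.
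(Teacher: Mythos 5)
Your proposal is correct and complete. The paper does not give a proof of this lemma at all; it simply cites Bhatia's \emph{Matrix Analysis} (\cite{MR1477662}) as a source for these classical perturbation bounds. So your argument is necessarily a ``genuinely different route'': a self-contained derivation rather than an appeal to the literature.

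A few remarks on the details, all of which check out. The Weyl bound via Courant--Fischer is standard. For the Davis--Kahan part, the chain of identities
\begin{align*}
\|\hat P_{\le j}-P_{\le j}\|_2^2
= 2j-2\operatorname{tr}(\hat P_{\le j}P_{\le j})
= 2\operatorname{tr}(\hat P_{\le j}P_{>j})
= 2\|\hat P_{\le j}P_{>j}\|_2^2
\end{align*}
is exactly right, using that both projectors have rank $j$ and that $\operatorname{tr}(\hat P_{\le j}P_{>j})=\|\hat P_{\le j}P_{>j}\|_2^2$ by idempotence and self-adjointness. The off-diagonal block identity $\langle \hat u_k,(\hat\Sigma-\Sigma)u_l\rangle=(\hat\lambda_k-\lambda_l)\langle\hat u_k,u_l\rangle$ and the lower bound $(\hat\lambda_k-\lambda_l)^2\ge(\hat\lambda_j-\lambda_{j+1})^2$ for $k\le j<l$ are valid whenever $\hat\lambda_j\ge\lambda_{j+1}$. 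Your two-case split handles the remaining regime cleanly: when $\|\hat\Sigma-\Sigma\|_\infty\le(\lambda_j-\lambda_{j+1})/4$, Weyl gives $\hat\lambda_j-\lambda_{j+1}\ge\tfrac34(\lambda_j-\lambda_{j+1})$ and the bound $\frac{4\sqrt{2j}}{3}\cdot\frac{\|\hat\Sigma-\Sigma\|_\infty}{\lambda_j-\lambda_{j+1}}$ follows, which is smaller than $\sqrt{32j}$ times the same ratio; when $\|\hat\Sigma-\Sigma\|_\infty>(\lambda_j-\lambda_{j+1})/4$, the trivial bound $\|\hat P_{\le j}-P_{\le j}\|_2\le\sqrt{2j}$ is dominated by $\sqrt{32j}/4=\sqrt{2j}$ times the (now $>1$) factor. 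The constant $\sqrt{32j}$ is exactly what lets the two regimes meet at the threshold, as you observe. This is the classical sin-$\Theta$ argument in Hilbert--Schmidt norm; it yields the stated bound (and in fact a slightly sharper one in the favorable regime), and it requires nothing beyond what you use.
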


For the case of the leading eigenvalue and eigenspace, we formulate relative perturbation bounds derived in \cite{RW17,MR4517351,W19}. Let $m\in\N$ be the multiplicity of $\lambda_1$ such that 
\begin{align*}
    \lambda_1=\dots=\lambda_m>\lambda_{m+1}.
\end{align*}
Moreover, let 
\begin{align*}
    \delta_{\leq m}(\hat\Sigma-\Sigma)=\Big\|\Big(\frac{P_{\leq m}}{\lambda_1-\lambda_{m+1}}+R_{>m}\Big)^{\frac{1}{2}}(\hat\Sigma-\Sigma)\Big(\frac{P_{\leq m}}{\lambda_1-\lambda_{m+1}}+R_{>m}\Big)^{\frac{1}{2}}\Big\|_\infty
\end{align*} 
with (outer) reduced resolvent
\begin{align*}
    R_{>m}=\sum_{k>m}\frac{P_k}{\lambda_1-\lambda_k}.
\end{align*}

\begin{lemma}\label{lem:relative:DK} We have
\begin{align*}
    \|\hat P_{\leq m}-P_{\leq m}\|_{2}&\leq 4\sqrt{2}\sqrt{m}\cdot\delta_{\leq m}(\hat\Sigma-\Sigma)
\end{align*}
and
\begin{align*}
    \|\hat P_{\leq m}-P_{\leq m}\|_{2}\leq \sqrt{2}\|P_{\leq m}(\hat\Sigma-\Sigma)R_{>m}\|_2+20\sqrt{2}m\cdot\delta_{\leq m}(\hat\Sigma-\Sigma)^2.
\end{align*}
\end{lemma}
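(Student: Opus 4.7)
My plan is to prove both bounds via the resolvent-integral representation of the spectral projectors, combined with a Neumann-series expansion that is organized so that every power of $\hat\Sigma-\Sigma$ is ``absorbed'' by the normalizing factor $B$, where $B^2=P_{\leq m}/(\lambda_1-\lambda_{m+1})+R_{>m}$. Writing $V=\hat\Sigma-\Sigma$, $\delta=\|BVB\|_\infty=\delta_{\leq m}(\hat\Sigma-\Sigma)$, and $r=\lambda_1-\lambda_{m+1}$, the first step is to dispose of the large-perturbation case: whenever $\delta$ exceeds some absolute constant (say $1/(4\sqrt 2)$), both claimed bounds are trivial from $\|\hat P_{\leq m}-P_{\leq m}\|_2\leq\sqrt{2m}$. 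So I may assume $\delta$ is small; then $\|V\|_\infty\leq\|B^{-1}\|_\infty^2\delta=r\delta<r/2$, and Weyl's bound implies that the circle $\gamma$ of radius $r/2$ centered at $\lambda_1$ separates $\lambda_1,\ldots,\lambda_m$ and $\hat\lambda_1,\ldots,\hat\lambda_m$ from the remaining spectrum.

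On such a contour the Riesz formula gives $P_{\leq m}=\frac{1}{2\pi i}\oint_\gamma R(z)\,dz$ and the analogous identity for $\hat P_{\leq m}$ with $R(z)=(z-\Sigma)^{-1}$ and $\hat R(z)=(z-\hat\Sigma)^{-1}$. Iterating the second resolvent identity yields the Neumann expansion
\[
\hat P_{\leq m}-P_{\leq m}=\sum_{k\geq 1}T_k,\qquad T_k=\frac{1}{2\pi i}\oint_\gamma R(z)[VR(z)]^k\,dz,
\]
valid as soon as $\delta<1/2$. I would compute $T_1$ explicitly by expanding $R(z)$ in the spectral basis of $\Sigma$ and evaluating residues: only the ``off-diagonal'' contributions survive, producing $T_1=-P_{\leq m}VR_{>m}-R_{>m}VP_{\leq m}$. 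Since the two summands have orthogonal ranges, $\|T_1\|_2=\sqrt 2\,\|P_{\leq m}VR_{>m}\|_2$, which is precisely the first-order term of the second claimed inequality.

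For $k\geq 2$, I would factor every $V$ through $B$ by writing $R(z)=B\tilde R(z)B$ with $\tilde R(z)=B^{-1}R(z)B^{-1}$. A direct spectral computation shows $\|\tilde R(z)\|_\infty\leq 2$ uniformly on $\gamma$, since for $j\leq m$ the eigenvalue is $(\lambda_1-z)^{-1}r$ of modulus $2$ and for $j>m$ it is $(\lambda_j-z)^{-1}(\lambda_1-\lambda_j)$ which also has modulus at most $2$ by the lower bound $|\lambda_j-z|\geq(\lambda_1-\lambda_j)/2$. This gives $R(z)[VR(z)]^k=B[\tilde R(z)(BVB)]^k\tilde R(z)B$, and bounding the inner chain in operator norm costs a factor $(2\delta)^k$. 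To turn this into a Hilbert-Schmidt estimate with the correct $\sqrt{m}$ (respectively $m$) scaling, I would spend the Hilbert-Schmidt norm only on an outermost rank-$m$ piece: left-multiplying $T_k$ by $P_{\leq m}$ confines the range to the $m$-dimensional eigenspace, so rank counting yields $\|P_{\leq m}T_k\|_2\leq C\sqrt m\,(2\delta)^k$, and for $k\geq 2$ a second application produces $\|T_k\|_2\leq Cm\,(2\delta)^k$. Summing the geometric series and using the identity $\|\hat P_{\leq m}-P_{\leq m}\|_2=\sqrt 2\,\|P_{\leq m}(\hat P_{\leq m}-P_{\leq m})\|_2$ (since the projectors have equal rank), the first inequality follows with a constant that can be tuned to $4\sqrt 2$ in the small-$\delta$ regime. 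For the second inequality, I would isolate the $k=1$ contribution using the explicit formula above and control the remainder $\sum_{k\geq 2}\|T_k\|_2\leq Cm\delta^2\sum_{k\geq 0}(2\delta)^k\leq 20\sqrt 2\,m\delta^2$.

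The main obstacle I anticipate is not the expansion itself but the bookkeeping required to get the $\sqrt m$ scaling (rather than $m$) in the first bound. The natural, naive estimate bounds every $T_k$ by $m\,(2\delta)^k$, which is loose by a factor $\sqrt m$ on the leading order. The remedy is to route the Hilbert-Schmidt norm through exactly one rank-$m$ factor: for $k=1$ this is automatic from the explicit form of $T_1$, and for $k\geq 2$ one splits the outer $B$'s so that a single $P_{\leq m}$ (or $R_{>m}^{1/2}$ whose Hilbert-Schmidt contribution is controlled by the spectral gap through $B$) carries the HS norm while all the intermediate factors contribute only in operator norm. With this organization the numerical constants $4\sqrt 2$ and $20\sqrt 2$ become routine arithmetic, matching the bounds proved in \cite{RW17,MR4517351,W19}.
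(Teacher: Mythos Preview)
Your resolvent--Neumann expansion is precisely the argument underlying Proposition~1 in \cite{JW}, which is all the paper invokes; so in substance you are reproducing the proof that the paper outsources by citation, and the overall strategy (Riesz integral, factor every $V$ through $B$, use $\|\tilde R(z)\|_\infty\le 2$ on the circle, route the HS norm through a single rank-$m$ factor via $\|\hat P_{\leq m}-P_{\leq m}\|_2=\sqrt 2\,\|P_{\leq m}(\hat P_{\leq m}-P_{\leq m})\|_2$) is correct.

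There is one slip you should repair. The claim $\|B^{-1}\|_\infty^2=r$ is false: on the $j>m$ block the eigenvalues of $B^{-2}$ are $\lambda_1-\lambda_j$, which range up to $\lambda_1$, so $\|V\|_\infty\leq\lambda_1\delta$ rather than $r\delta$, and the absolute Weyl bound does \emph{not} by itself guarantee that $\gamma$ separates the spectrum of $\hat\Sigma$ when only $\delta$ is small. The standard remedy is to bypass Weyl entirely: for $z\in\gamma$ write $R(z)V=B\,\tilde R(z)(BVB)\,B^{-1}$, which is similar to an operator of norm at most $2\delta<1$, so $I-R(z)V$ and hence $z-\hat\Sigma=(z-\Sigma)(I-R(z)V)$ are invertible on $\gamma$; a homotopy $s\mapsto\Sigma+sV$ then shows exactly $m$ eigenvalues of $\hat\Sigma$ lie inside. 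With this fix in place, your Neumann series converges for $2\delta<1$ and the rest of your sketch goes through.
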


\begin{proof}
    See Proposition 1 in \cite{JW}. For the second bound combine (2.4) in \cite{JW} with the triangle inequality.  
\end{proof}

\begin{lemma}\label{lem:relative:Weyl} Assume that $\delta_{\leq m}(\hat\Sigma-\Sigma)\leq 1/4$. Then
\begin{align*}
    |\hat\lambda_j-\lambda_1|&\leq (\lambda_1-\lambda_{m+1})\cdot\delta_{\leq m}(\hat\Sigma-\Sigma).
\end{align*}
 and
\begin{align*}
     |\hat\lambda_j-\lambda_1|\leq \|P_{\leq m}EP_{\leq m}\|_2+C(\lambda_1-\lambda_{m+1})m\cdot\delta_{\leq m}(\hat\Sigma-\Sigma)^2
\end{align*}
for all $1\leq j\leq m$, with some absolute constant $C>0$.
\end{lemma}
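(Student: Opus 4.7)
Write $P = P_{\leq m}$, $Q = I - P$, $E = \hat\Sigma - \Sigma$, and $A = \lambda_1 I - \Sigma$. Note that $A$ is positive semi-definite with $AP = 0$ and $A|_{Q(\HH)}$ invertible with smallest eigenvalue $\lambda_1 - \lambda_{m+1}$. Since $W$ is a function of $\Sigma$, it commutes with $A$ and acts as $(\lambda_1 - \lambda_{m+1})^{-1/2}$ on $P(\HH)$ and as $A^{-1/2}$ on $Q(\HH)$. Setting $\delta := \delta_{\leq m}(\hat\Sigma - \Sigma) = \|WEW\|_\infty$, the hypothesis translates into three block bounds used throughout:
\[
\|PEP\|_\infty \leq (\lambda_1 - \lambda_{m+1})\delta, \quad \|PEA^{-1/2}\|_\infty \leq (\lambda_1 - \lambda_{m+1})^{1/2}\delta, \quad \|A^{-1/2}QEQ A^{-1/2}\|_\infty \leq \delta.
\]

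For the first inequality I would argue the two directions separately. Taking $V = P(\HH)$ as an $m$-dimensional test subspace in the max--min formula and observing that $\|W^{-1}x\|^2 = \lambda_1 - \lambda_{m+1}$ for unit $x \in V$, one gets $\langle x, (\hat\Sigma - \lambda_1 I)x\rangle = \langle W^{-1}x, (WEW) W^{-1}x\rangle \geq -\delta(\lambda_1 - \lambda_{m+1})$; monotonicity then yields $\hat\lambda_j \geq \lambda_1 - \delta(\lambda_1 - \lambda_{m+1})$ for all $j \leq m$. For the reverse direction, decompose a general unit vector as $x = x_P + x_Q$, use $\|W^{-1}x\|^2 = (\lambda_1 - \lambda_{m+1})\|x_P\|^2 + \langle x_Q, A x_Q\rangle$, and compute
\[
\langle x, (\hat\Sigma - \lambda_1 I)x\rangle = \langle x, Ex\rangle - \langle x_Q, A x_Q\rangle \leq \delta(\lambda_1 - \lambda_{m+1})\|x_P\|^2 - (1 - \delta)\langle x_Q, A x_Q\rangle \leq \delta(\lambda_1 - \lambda_{m+1}),
\]
where the assumption $\delta \leq 1/4 < 1$ is used to drop the non-positive $Q$-term. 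Taking the supremum bounds $\hat\lambda_1 - \lambda_1$, hence $\hat\lambda_j - \lambda_1$ for all $j \leq m$, from above.

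For the second (refined) inequality my plan is a Schur-complement argument. Write $\hat\Sigma - \lambda_1 I$ in $(P, Q)$ block form as
\[
M = \begin{pmatrix} PEP & PEQ \\ QEP & QEQ - A \end{pmatrix}.
\]
The third block bound with $\delta \leq 1/4$ gives $QEQ - A \leq -(3/4)A \leq -(3/4)(\lambda_1 - \lambda_{m+1}) I_Q$, so the bottom-right block is uniformly negative and remains invertible in the window $|\mu| \leq (\lambda_1 - \lambda_{m+1})/2$. Eigenvalues of $M$ inside that window therefore coincide with solutions of the Schur fixed-point equation $\mu = \mu_k(S(\mu))$, where $S(\mu) = PEP - PEQ(QEQ - A - \mu I)^{-1}QEP$. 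A Neumann expansion of the middle factor around $-A$, combined with the mixed block bound, yields $\|S(\mu) - PEP\|_\infty \leq C\|PEA^{-1/2}\|_\infty^2 \leq C(\lambda_1 - \lambda_{m+1})\delta^2$. Weyl then gives $|\mu_k(S(\mu))| \leq \|PEP\|_\infty + C(\lambda_1 - \lambda_{m+1})\delta^2 \leq \|PEP\|_2 + C(\lambda_1 - \lambda_{m+1})\delta^2$ for each $k \leq m$; the $m$-factor in the stated bound is harmless slack that is natural once one tracks the perturbation through the Hilbert--Schmidt bookkeeping used for Lemma \ref{lem:relative:DK} in \cite{JW}.

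The main obstacle is the fixed-point step: one must verify that precisely $m$ eigenvalues of $M$ fall in the near-zero window $|\mu| \leq (\lambda_1 - \lambda_{m+1})/2$, and that each of them really does satisfy $\mu = \mu_k(S(\mu))$ for some $k \leq m$ with a corresponding $\mu$-uniform control on $\|S(\mu) - PEP\|_\infty$. This can be done by a standard Haynsworth/inertia argument, or by homotoping $\hat\Sigma$ to $\Sigma$ and using continuity of eigenvalues, but it is the sole non-routine ingredient. A cleaner alternative that I would try in parallel is to adapt the projector identity (2.4) of \cite{JW} which already drives the second bound of Lemma \ref{lem:relative:DK}, and to read off the eigenvalue statement directly rather than re-running the block argument.
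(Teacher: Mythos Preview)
Your proposal is correct. The paper does not give a self-contained argument here: its entire proof reads ``Follows from proceeding as in Lemma~8 of \cite{JW} or \cite{W19}. See also Theorem~3 in \cite{MR4517351}.'' Those references establish relative eigenvalue perturbation bounds via resolvent/contour-integral expansions (the same machinery that drives the projector identity (2.4) in \cite{JW} you mention at the end), whereas you take a more direct route: a variational min--max argument for the first bound and a Schur-complement/fixed-point argument for the second. Both ingredients are sound as written; in particular your sandwich $(1/4)A \leq -(QEQ - A - \mu I_Q) \leq (7/4)A$ on $Q(\HH)$ gives $\|A^{1/2}(QEQ - A - \mu I_Q)^{-1}A^{1/2}\|_\infty \leq 4$, which is exactly what you need for the $\|S(\mu)-PEP\|_\infty$ estimate. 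Your approach is arguably more elementary and in fact yields the second bound \emph{without} the factor $m$ (which, as you note, is harmless slack in the stated lemma). The one point you correctly flag as non-routine --- that exactly the $m$ eigenvalues $\hat\lambda_1,\dots,\hat\lambda_m$ lie in the window $|\mu|\leq (\lambda_1-\lambda_{m+1})/2$ --- is already handled by the first part of the lemma for $j\leq m$, together with the analogous upper bound $\hat\lambda_{m+1}\leq \lambda_{m+1}+(\lambda_1-\lambda_{m+1})\delta$ (same min--max argument with the codimension-$m$ subspace $Q(\HH)$), so no homotopy is needed.
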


\begin{proof}
    Follows from proceeding as in Lemma 8 of \cite{JW} or \cite{W19}. See also Theorem 3 in \cite{MR4517351}.
\end{proof}

\begin{lemma}\label{lemma:char:subspace:distance}
    Consider $(u_1,\dots,u_j)$ and $(\hat u_1,\dots,\hat u_j)$ as linear maps from $\R^j$ to $\HH$. Then we have
    \begin{align*}
        \inf_{O\in O(j)}\|(u_1,\dots,u_j)-(\hat u_1,\dots,\hat u_j)O\|_2\leq\|\hat P_{\leq j}-P_{\leq j}\|_{2}
    \end{align*}
    with $\|\cdot\|_2$ denoting the Hilbert-Schmidt norm.
\end{lemma}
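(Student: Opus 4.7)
The strategy is to view $U=(u_1,\dots,u_j)$ and $\hat U=(\hat u_1,\dots,\hat u_j)$ as bounded linear operators $\R^j\to\HH$. Orthonormality of the columns gives $U^*U=\hat U^*\hat U=I_j$, so both are isometries, and $P_{\leq j}=UU^*$, $\hat P_{\leq j}=\hat U\hat U^*$. The task reduces to showing $\inf_{O\in O(j)}\|U-\hat U O\|_2\leq\|UU^*-\hat U\hat U^*\|_2$, and the natural device is to express both sides through the singular values of the $j\times j$ matrix $A=\hat U^*U$.

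For the left-hand side, a direct expansion using $\|X\|_2^2=\operatorname{tr}(X^*X)$ together with $U^*U=\hat U^*\hat U=I_j$ and $O^*O=I_j$ yields
\[\|U-\hat U O\|_2^2=\operatorname{tr}(U^*U)+\operatorname{tr}(O^*\hat U^*\hat U O)-2\operatorname{tr}(O^*A)=2j-2\operatorname{tr}(O^*A).\]
Invoking the orthogonal Procrustes identity $\sup_{O\in O(j)}\operatorname{tr}(O^*A)=\sum_{k=1}^j\sigma_k(A)$, which follows in one line from the SVD $A=W\Sigma V^*$ by the optimal choice $O=WV^*$, I obtain
\[\inf_{O\in O(j)}\|U-\hat U O\|_2^2=2j-2\sum_{k=1}^j\sigma_k(A).\]
For the right-hand side, the same kind of expansion gives
\[\|UU^*-\hat U\hat U^*\|_2^2=2j-2\operatorname{tr}(U^*\hat U\hat U^*U)=2j-2\|A\|_2^2=2j-2\sum_{k=1}^j\sigma_k(A)^2.\]

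The comparison is then immediate: since $U$ and $\hat U$ are isometries, $\|A\|_\infty\leq\|\hat U\|_\infty\|U\|_\infty\leq 1$, so every singular value satisfies $\sigma_k(A)\in[0,1]$ and hence $\sigma_k(A)^2\leq\sigma_k(A)$. Summing gives $\sum_k\sigma_k(A)^2\leq\sum_k\sigma_k(A)$, which rearranges to $2j-2\sum_k\sigma_k(A)\leq 2j-2\sum_k\sigma_k(A)^2$, i.e.\ $\inf_{O\in O(j)}\|U-\hat U O\|_2^2\leq\|UU^*-\hat U\hat U^*\|_2^2$; taking square roots yields the claim. There is no genuine obstacle in this proof: the content is entirely algebraic, and the only non-elementary ingredient is the standard Procrustes identity, which I would either cite or reprove in one line via the SVD.
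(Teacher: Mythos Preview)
Your argument is correct. The paper does not give a self-contained proof of this lemma but simply cites Equation~(2.6) and Proposition~2 of \cite{MR3161452}; your computation via the singular values of $A=\hat U^*U$ together with the orthogonal Procrustes identity is precisely the standard route underlying that reference, so there is nothing to add.
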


\begin{proof}
    See, e.g., Equation (2.6) and Proposition 2 in \cite{MR3161452}.
\end{proof}

\subsubsection{Symmetric matrices}

We reformulate the results of Section \ref{Sec:perturbation:self-adjoint:operators}  in the case of matrices, since our assumptions and notations for Laplacian matrices are slightly different from those for covariance operators (for instance, the eigenvalues are listed in reverse order). Let $A\in\R^{n\times n}$ be a symmetric matrix.  Let 
\begin{align*}
    \lambda_1(A)\leq \lambda_2(A)\leq  \dots\leq \lambda_n(A)
\end{align*}
be the eigenvalues of $A$ (in non-decreasing order), and let 
\begin{align*}
    u_1(A),u_2(A),\dots,u_n(A)\in \R^n
\end{align*}
be corresponding eigenvectors. For $1\leq j\leq n$, let
\begin{align*}
    P_{\leq j}(A)=\sum_{k=1}^jP_k(A),\qquad P_k(A)=u_k(A)u_k(A)^\top,
\end{align*}
be the orthogonal projection onto the eigenspace spanned by $u_1(A),\dots,u_j(A)$. For $1\leq j\leq n$ with $\lambda_{j+1}(A)>\lambda_{j}(A)$, let
\begin{align*}
    R_{>j}(A)=\sum_{k>j}\frac{P_k(A)}{\lambda_k(A)-\lambda_j(A)}.
\end{align*}
If $A,B\in\R^{n\times n}$ are symmetric, then we define
\begin{align*}
    &\delta_{\leq j}(B-A)\\
    &=\Big\|\Big(\frac{P_{\leq j}(A)}{\lambda_{j+1}(A)-\lambda_{j}(A)}+R_{>j}(A)\Big)^{\frac{1}{2}}(B-A)\Big(\frac{P_{\leq j}(A)}{\lambda_{j+1}(A)-\lambda_{j}(A)}+R_{>j}(A)\Big)^{\frac{1}{2}}\Big\|_\infty
\end{align*} 
for every $1\leq j\leq n$ such that $\lambda_{j+1}(A)>\lambda_{j}(A)$.

\begin{lemma}[Relative Davis-Kahan bound]\label{lem:relative:DK:matrix}
Let $A,B\in\R^{n\times n}$ be symmetric. Then we have
\begin{align*}
    \|\hat P_{\leq j}(A)-P_{\leq j}(A)\|_{\infty}&\leq \sqrt{32j}\cdot \delta_{\leq j}(B-A).
\end{align*}
for every $1\leq j\leq n$ such that $\lambda_{j+1}(A)>\lambda_{j}(A)$.
\end{lemma}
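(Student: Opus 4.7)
The plan is to deduce Lemma \ref{lem:relative:DK:matrix} from the operator-theoretic relative Davis-Kahan bound behind Lemma \ref{lem:relative:DK}, via a sign-flip and shift. Interpreting $\hat P_{\leq j}(A)$ in the statement as $P_{\leq j}(B)$, fix any $c$ strictly exceeding $\lambda_n(A)\vee \lambda_n(B)$ and put $\Sigma := c I_n - A$ and $\hat\Sigma := cI_n - B$. Both are positive self-adjoint matrices; the top-$j$ spectral projector of $\Sigma$ (respectively $\hat\Sigma$) coincides with $P_{\leq j}(A)$ (respectively $P_{\leq j}(B)$); the spectral gap between the $j$-th and $(j{+}1)$-th eigenvalues of $\Sigma$ (counted from the top) equals $\lambda_{j+1}(A)-\lambda_j(A)$; and the operator-theoretic outer reduced resolvent equals $R_{>j}(A)$. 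Consequently the operator-theoretic relative measure (in the sense of Section \ref{Sec:perturbation:self-adjoint:operators}) applied to $\hat\Sigma-\Sigma$ equals $\delta_{\leq j}(B-A)$, by sign-invariance of the operator norm.

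Under this reduction, it suffices to establish the relative Davis-Kahan bound
\begin{align*}
    \|P_{\leq j}(B) - P_{\leq j}(A)\|_2 \leq \sqrt{32 j}\cdot \delta_{\leq j}(B-A),
\end{align*}
and then pass to the operator norm via $\|\cdot\|_\infty\leq \|\cdot\|_2$. As Lemma \ref{lem:relative:DK} is stated only for the top-$m$ projector under the multiplicity assumption $\lambda_1(\Sigma)=\cdots=\lambda_m(\Sigma)$, which does not hold here in general, I would invoke the more general version valid for any spectral projector with a gap, obtained by the same contour-integral proof as in \cite{JW}. In outline: without loss of generality $\delta_{\leq j}(B-A)\leq 1/(4\sqrt{2})$ (otherwise the trivial bound $\|P_{\leq j}(B)-P_{\leq j}(A)\|_\infty\leq 1$ already dominates the right-hand side); under this smallness, the spectra of $\Sigma$ and $\hat\Sigma$ are separated by a common Riesz contour $\gamma$ enclosing their top $j$ eigenvalues, and the Riesz-projector difference is expanded via the second resolvent identity into a Neumann series in the weighted perturbation $T_j^{1/2}(B-A)T_j^{1/2}$, where $T_j := P_{\leq j}(A)/(\lambda_{j+1}(A)-\lambda_j(A))+R_{>j}(A)$; by definition the operator norm of this weighted perturbation equals $\delta_{\leq j}(B-A)$.

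The main obstacle is extracting the factor $\sqrt{j}$ (rather than $j$) in the leading-order term $P_{\leq j}(A)(B-A)R_{>j}(A)+R_{>j}(A)(B-A)P_{\leq j}(A)$. This is handled by the rank-$j$ Hilbert-Schmidt bound $\|P_{\leq j}(A)X\|_2\leq \sqrt{j}\,\|X\|_\infty$, combined with factoring $P_{\leq j}(A)$ and $R_{>j}(A)$ through $T_j^{\pm 1/2}$ (which commute with both, since $T_j$ is diagonal in the eigenbasis of $A$) so that $\|T_j^{1/2}(B-A)T_j^{1/2}\|_\infty = \delta_{\leq j}(B-A)$ appears as a single factor. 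Tracking the constant $\sqrt{32}$ through the contour integral and absorbing the higher-order Neumann terms under the smallness assumption then follows the template of \cite{JW}.
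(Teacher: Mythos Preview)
Your proposal is correct and follows the same route as the paper, which simply cites Proposition~1 in \cite{JW}; you merely spell out the reduction (sign-flip $\Sigma=cI_n-A$, $\hat\Sigma=cI_n-B$) and correctly flag that the general-gap version of the result from \cite{JW} is needed rather than the multiplicity-restricted Lemma~\ref{lem:relative:DK}. Your interpretation of the typo $\hat P_{\leq j}(A)$ as $P_{\leq j}(B)$ is also the intended reading.
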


\begin{proof}
    See again Proposition 1 in \cite{JW}.
\end{proof}

\begin{lemma}\label{lemma:bound:delta}
    Let $A,B\in\R^{n\times n}$ be symmetric and let $1\leq j\leq n$ such that $\lambda_{j+1}(A)>\lambda_{j}(A)$. Then
    \begin{align*}
        \delta_{\leq j}(B-A)\leq \frac{\|B-A\|_\infty}{\lambda_{j+1}(A)-\lambda_{j}(A)}
    \end{align*}
    and, if $A$ is additionally positive definite, then also
    \begin{align*}
        \delta_{\leq j}(B-A)\leq \frac{\lambda_{j+1}(A)}{\lambda_{j+1}(A)-\lambda_{j}(A)}\|A^{-1/2}(B-A)A^{-1/2}\|_\infty.
    \end{align*}
\end{lemma}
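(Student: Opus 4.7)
Write $M = \dfrac{P_{\leq j}(A)}{\lambda_{j+1}(A)-\lambda_{j}(A)}+R_{>j}(A)$ for the positive semi-definite operator whose appearance defines $\delta_{\leq j}(B-A)$. The key structural observation is that $P_{\leq j}(A)$ and $R_{>j}(A)$ have orthogonal ranges, namely $\operatorname{span}\{u_1(A),\dots,u_j(A)\}$ and its orthogonal complement. Consequently $M$ is block-diagonal in the eigenbasis of $A$ and its spectrum is just the union of the spectra of the two summands.

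For the first bound, I would use the elementary inequality $\|M^{1/2}X M^{1/2}\|_\infty \leq \|M\|_\infty \|X\|_\infty$ with $X = B-A$, which is immediate since $M^{1/2}$ is self-adjoint with $\|M^{1/2}\|_\infty^2 = \|M\|_\infty$. So it suffices to compute $\|M\|_\infty$. By the block-diagonal structure,
\begin{align*}
\|M\|_\infty = \max\!\left(\tfrac{1}{\lambda_{j+1}(A)-\lambda_j(A)},\ \max_{k>j}\tfrac{1}{\lambda_k(A)-\lambda_j(A)}\right),
\end{align*}
and since the eigenvalues are non-decreasing the inner maximum is attained at $k=j+1$, giving $\|M\|_\infty = 1/(\lambda_{j+1}(A)-\lambda_j(A))$, which yields the first claim.

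For the second bound, assuming $A \succ 0$, I would insert factors $A^{\pm 1/2}$ and pull out the conjugated perturbation:
\begin{align*}
M^{1/2}(B-A)M^{1/2} = (M^{1/2}A^{1/2})\bigl(A^{-1/2}(B-A)A^{-1/2}\bigr)(A^{1/2}M^{1/2}).
\end{align*}
Taking operator norms and using $\|M^{1/2}A^{1/2}\|_\infty^2 = \|A^{1/2}MA^{1/2}\|_\infty$, the problem reduces to bounding $\|A^{1/2}MA^{1/2}\|_\infty$. Writing $A = \sum_k \lambda_k(A)P_k(A)$ and again using the orthogonal block structure,
\begin{align*}
A^{1/2}MA^{1/2} = \sum_{k\leq j}\tfrac{\lambda_k(A)}{\lambda_{j+1}(A)-\lambda_j(A)}P_k(A) + \sum_{k>j}\tfrac{\lambda_k(A)}{\lambda_k(A)-\lambda_j(A)}P_k(A).
\end{align*}
The first sum has operator norm $\lambda_j(A)/(\lambda_{j+1}(A)-\lambda_j(A))$, and on the second sum I use that the function $x\mapsto x/(x-\lambda_j(A))$ is strictly decreasing on $(\lambda_j(A),\infty)$, so its maximum over $k>j$ is achieved at $k=j+1$ and equals $\lambda_{j+1}(A)/(\lambda_{j+1}(A)-\lambda_j(A))$. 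Since the latter dominates the former, the combined operator norm is exactly $\lambda_{j+1}(A)/(\lambda_{j+1}(A)-\lambda_j(A))$, giving the second claim.

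There is no real obstacle here: the proof is a direct spectral calculation that exploits the orthogonal decomposition of $M$ relative to the eigenbasis of $A$. The only bookkeeping point worth being careful about is the monotonicity argument justifying that $k=j+1$ gives the worst case in both sums, which follows from the elementary sign of the derivative of $x/(x-\lambda_j(A))$.
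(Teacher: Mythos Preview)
Your proof is correct and is precisely the computation the paper gestures at with ``follows from simple properties of the operator norm'': you exploit that $M$ is diagonal in the eigenbasis of $A$, bound $\|M^{1/2}XM^{1/2}\|_\infty$ by $\|M\|_\infty\|X\|_\infty$ (respectively insert $A^{\pm 1/2}$ and bound by $\|A^{1/2}MA^{1/2}\|_\infty\|A^{-1/2}XA^{-1/2}\|_\infty$), and then read off the relevant operator norms from the explicit spectral decomposition. There is nothing to add.
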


\begin{proof}
    Follows from simple properties of the operator norm.
\end{proof}

\begin{corollary}\label{corollary:DK}
     Let $A,B\in\R^{n\times n}$ be symmetric and let $1\leq j\leq n$ such that $\lambda_{j+1}(A)>\lambda_{j}(A)$. Then
    \begin{align*}
    \|\hat P_{\leq j}(B)-P_{\leq j}(A)\|_{\infty}&\leq \sqrt{32j}\frac{\|B-A\|_\infty}{\lambda_{j+1}(A)-\lambda_{j}(A)}
\end{align*}
    and, if $A$ is additionally positive definite, then also
    \begin{align*}
    \|\hat P_{\leq j}(B)-P_{\leq j}(A)\|_{\infty}&\leq \sqrt{32j}\frac{\lambda_{j+1}(A)}{\lambda_{j+1}(A)-\lambda_{j}(A)}\|A^{-1/2}(B-A)A^{-1/2}\|_\infty.
\end{align*}
\end{corollary}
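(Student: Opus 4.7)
The plan is that Corollary \ref{corollary:DK} is essentially a direct combination of Lemma \ref{lem:relative:DK:matrix} with the two estimates from Lemma \ref{lemma:bound:delta}. Indeed, Lemma \ref{lem:relative:DK:matrix} gives
\begin{align*}
    \|\hat P_{\leq j}(B)-P_{\leq j}(A)\|_{\infty} \leq \sqrt{32j}\cdot \delta_{\leq j}(B-A),
\end{align*}
so everything reduces to controlling the relative complexity measure $\delta_{\leq j}(B-A)$ in the two ways provided by Lemma \ref{lemma:bound:delta}.

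For the first bound, I would simply insert $\delta_{\leq j}(B-A)\leq \|B-A\|_\infty/(\lambda_{j+1}(A)-\lambda_j(A))$ from the first part of Lemma \ref{lemma:bound:delta}. For the second bound, assuming $A$ is additionally positive definite, I would insert $\delta_{\leq j}(B-A)\leq \lambda_{j+1}(A)/(\lambda_{j+1}(A)-\lambda_j(A))\cdot \|A^{-1/2}(B-A)A^{-1/2}\|_\infty$ from the second part of Lemma \ref{lemma:bound:delta}. Both cases then yield the claimed estimates with the same constant $\sqrt{32j}$.

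Since both ingredients are already stated and proved earlier in the appendix, there is no real obstacle here: the corollary is a purely bookkeeping consequence, included presumably for convenience when it is invoked in the proof of Corollary \ref{thm:thm2} (where the ``positive definite'' form is applied to the regularized graph Laplacians $\tilde{\mathscr{L}}_{k_t}$ and $\tilde{\mathscr{L}}_{w_t}$, and the ``absolute'' form is applied with $A=\frac{1}{t}(I_n-K_t)$ and $B=\mathscr{L}_{k_t}$). Thus the entire proof can be written as a single line of text invoking the two previous lemmas.
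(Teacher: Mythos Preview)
Your proposal is correct and matches the paper's own proof exactly: the paper simply writes ``Follows from inserting Lemma \ref{lemma:bound:delta} into Lemma \ref{lem:relative:DK:matrix}.'' There is nothing more to add.
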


\begin{proof}
    Follows from inserting Lemma \ref{lemma:bound:delta} into Lemma \ref{lem:relative:DK:matrix}.
\end{proof}

\begin{lemma}\label{lemma:graph:Laplace:version:Weyl:bound}
    Let $A,B\in\R^{n\times n}$ be symmetric and let $1\leq j\leq n$. Then, 
    \begin{align*}
    |\lambda_j(B)-\lambda_j(A)|&\leq \|B-A\|_\infty
\end{align*}
    and, if $A$ is additionally strictly positive definite, then also
    \begin{align*}
    |\lambda_j(B)-\lambda_j(A)|&\leq \lambda_j(A)\|A^{-1/2}(B-A)A^{-1/2}\|_\infty.
\end{align*}
\end{lemma}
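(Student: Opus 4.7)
The plan is to derive both inequalities from the Courant-Fischer min-max characterization of eigenvalues combined with elementary operator monotonicity; no heavy machinery is required.

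For the first inequality, I would start from the spectral decomposition of the symmetric matrix $B-A$, which immediately yields the operator sandwich $-\|B-A\|_\infty I_n \preceq B-A \preceq \|B-A\|_\infty I_n$. Adding $A$ gives $A - \|B-A\|_\infty I_n \preceq B \preceq A + \|B-A\|_\infty I_n$. Since the eigenvalue maps $\lambda_j(\cdot)$ are monotone with respect to the Loewner order (a direct consequence of the min-max formula $\lambda_j(M) = \min_{\dim V = j} \max_{x \in V, \|x\|=1} \langle x, Mx\rangle$), applying this monotonicity to both inequalities yields $\lambda_j(A) - \|B-A\|_\infty \leq \lambda_j(B) \leq \lambda_j(A) + \|B-A\|_\infty$, which is the claimed Weyl bound.

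For the second (relative) inequality, let $\varepsilon = \|A^{-1/2}(B-A)A^{-1/2}\|_\infty$. Since $A \succ 0$, the square root $A^{1/2}$ is symmetric positive definite and invertible, so $A^{-1/2}(B-A)A^{-1/2}$ is a symmetric matrix with $-\varepsilon I_n \preceq A^{-1/2}(B-A)A^{-1/2} \preceq \varepsilon I_n$. Conjugating by $A^{1/2}$ (which preserves Loewner order because $A^{1/2} \succ 0$), I obtain $-\varepsilon A \preceq B - A \preceq \varepsilon A$, hence
\begin{equation*}
  (1-\varepsilon)A \preceq B \preceq (1+\varepsilon)A.
\end{equation*}
Applying the monotonicity of $\lambda_j(\cdot)$ once more gives $(1-\varepsilon)\lambda_j(A) \leq \lambda_j(B) \leq (1+\varepsilon)\lambda_j(A)$, and rearranging yields $|\lambda_j(B)-\lambda_j(A)| \leq \varepsilon\, \lambda_j(A)$, which is the relative Weyl bound.

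There is really no serious obstacle here: the only point requiring a small amount of care is the conjugation step in the second part, where one must invoke that $X \mapsto A^{1/2} X A^{1/2}$ preserves the Loewner order because $A^{1/2}$ is positive. Everything else is a direct application of the min-max principle.
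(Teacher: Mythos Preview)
Your argument is the standard proof of the relative Weyl inequality; the paper itself gives no proof but simply cites Theorem~2.7 in \cite{MR1689433}. So your self-contained derivation via the Loewner sandwich $(1-\varepsilon)A \preceq B \preceq (1+\varepsilon)A$ and min--max monotonicity is exactly what lies behind that reference, and is more informative than the bare citation.

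One technical point deserves care. In your last step you tacitly use $\lambda_j((1-\varepsilon)A) = (1-\varepsilon)\lambda_j(A)$, which requires $1-\varepsilon \geq 0$: for a negative scalar $c$ the eigenvalue ordering of $cA$ reverses, so $\lambda_j(cA) = c\,\lambda_{n+1-j}(A)$ and the desired lower bound does not follow. In fact the second inequality of the lemma is \emph{false as stated} when $\varepsilon > 1$: with $A = \operatorname{diag}(1,100)$ and $B = \operatorname{diag}(1,-200)$ one has $\varepsilon = 3$ but $|\lambda_1(B)-\lambda_1(A)| = 201 > 3 = \varepsilon\lambda_1(A)$. The cited theorem presumably carries the hypothesis $\varepsilon < 1$ (equivalently $B \succ 0$), and in the paper's only application one has $\varepsilon = Ct\log^2(e/t) \ll 1$, so nothing is affected downstream. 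To make your argument watertight, just record the assumption $\|A^{-1/2}(B-A)A^{-1/2}\|_\infty < 1$ before the conjugation step.
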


\begin{proof}
    See Theorem 2.7 in \cite{MR1689433}.
\end{proof}

\subsection{Concentration inequalities}\label{app:concentration:inequalities}

The following lemma states an operator Bernstein inequality (see, e.g., Lemma 5 in \cite{MR3629418} and also \cite{8186879}).
\begin{lemma}\label{lem:operator:Bernstein}
	Let $\xi_1,\dots,\xi_n$ be a sequence of independently and identically distributed self-adjoint
	Hilbert-Schmidt operators on a separable Hilbert space. Suppose that $\mathbb{E}\xi_1=0$ and that $\|\xi_1\|_{\operatorname{op}}\leq R$ almost surely for some constant $R>0$. Moreover, suppose that there are constants $V,D > 0$ satisfying $\|\mathbb{E}\xi_1^2\|_{\operatorname{op}}\leq V$ and $\operatorname{tr}(\mathbb{E}\xi_1^2)\leq VD$. Then, for all $u>0$,
	\begin{align*}
		\mathbb{P}\Big(\Big\|\frac{1}{n}\sum_{i=1}^n\xi_i\Big\|_{\operatorname{op}}\geq u\Big)\leq 4D\exp\Big(-\frac{nu^2}{2V+(2/3)uR}\Big).
	\end{align*}
\end{lemma}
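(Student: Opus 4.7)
The plan is to adapt Tropp's proof of the matrix Bernstein inequality, in its intrinsic-dimension form (cf. Theorem 7.3.1 in Tropp's \emph{Introduction to Matrix Concentration Inequalities}), and extend it from finite matrices to the Hilbert--Schmidt setting by a truncation argument. Throughout, write $S_n=\frac{1}{n}\sum_{i=1}^n\xi_i$ and $\Sigma=n\,\mathbb{E}\xi_1^2$ (so $\|\Sigma\|_\infty\leq nV$ and $\operatorname{tr}(\Sigma)\leq nVD$). Since $\|S_n\|_\infty=\max(\lambda_{\max}(S_n),\lambda_{\max}(-S_n))$, by a union bound it suffices to establish the one-sided tail bound
\[
\mathbb{P}\bigl(\lambda_{\max}(S_n)\geq u\bigr)\leq 2D\exp\Bigl(-\tfrac{nu^2}{2V+(2/3)uR}\Bigr),
\]
and apply the symmetric argument to $-\xi_i$ (which satisfies the same moment hypotheses), picking up the overall factor $4D$.

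Step one is a reduction to finite dimensions. Since $\mathbb{E}\xi_1^2$ is trace class (its trace is $\leq VD<\infty$), let $P_N$ denote the orthogonal projection onto an $N$-dimensional subspace containing the top $N$ eigenvectors of $\mathbb{E}\xi_1^2$, and set $\xi_i^{(N)}=P_N\xi_iP_N$. Then $\xi_i^{(N)}$ are i.i.d.\ self-adjoint matrices on an $N$-dimensional space satisfying all the same hypotheses, and $\|n^{-1}\sum_i(\xi_i-\xi_i^{(N)})\|_\infty\to 0$ almost surely as $N\to\infty$ by the Hilbert--Schmidt convergence $\|(I-P_N)\xi_1\|_2\to 0$ together with Fatou's lemma. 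Hence it is enough to prove the claim uniformly in $N$, which reduces the problem to the finite-dimensional matrix case.

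Step two is the Laplace transform / cumulant generating function method. For $\theta\in(0,3/R)$, Markov's inequality applied to $e^{\theta n\,\lambda_{\max}(S_n)}=\lambda_{\max}(e^{\theta nS_n})$ gives
\[
\mathbb{P}(\lambda_{\max}(S_n)\geq u)\leq e^{-\theta nu}\,\mathbb{E}\,\operatorname{tr}\exp\bigl(\theta\textstyle\sum_{i=1}^n\xi_i\bigr).
\]
Lieb's concavity inequality, applied inductively to the i.i.d.\ sum, yields
\[
\mathbb{E}\,\operatorname{tr}\exp\bigl(\theta\textstyle\sum_i\xi_i\bigr)\leq \operatorname{tr}\exp\bigl(\textstyle\sum_i\log\mathbb{E} e^{\theta\xi_i}\bigr).
\]
Combined with the matrix-lifted scalar inequality $e^x-1-x\leq x^2/(2(1-|x|/3))$ for $|x|<3$ (which gives $\log\mathbb{E} e^{\theta\xi_1}\preceq g(\theta)\mathbb{E}\xi_1^2$ with $g(\theta)=\tfrac{\theta^2/2}{1-\theta R/3}$) and monotonicity of $\operatorname{tr}\exp$ in the Loewner order, the right-hand side is bounded by $\operatorname{tr}\exp(g(\theta)\Sigma)$.

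The third and most delicate step is the intrinsic-dimension trace bound, needed to replace the ambient dimension by $D$. The observation is that for a positive semi-definite operator $A$ and a convex function $\phi\colon[0,\infty)\to[0,\infty)$ with $\phi(0)=0$, the ratio $\phi(\lambda)/\lambda$ is non-decreasing, so $\phi(\lambda)\leq \lambda\cdot\phi(\|A\|_\infty)/\|A\|_\infty$ for every eigenvalue $\lambda\in[0,\|A\|_\infty]$; summing over eigenvalues gives $\operatorname{tr}\phi(A)\leq (\operatorname{tr} A/\|A\|_\infty)\,\phi(\|A\|_\infty)$. To apply this to $A=g(\theta)\Sigma$ and $\phi(x)=e^x-1$ while still retaining a usable Markov-type inequality, one uses the refined variant $\mathbb{P}(\lambda_{\max}(S_n)\geq u)\leq \inf_{\theta>0}\frac{\mathbb{E}\operatorname{tr}\psi(\theta nS_n)}{\psi(\theta nu)}$ for a suitable $\psi$ with $\psi(0)=0$ (Tropp's Lemma~7.5.1 makes this precise), combined with the Lieb bound, producing
\[
\mathbb{P}(\lambda_{\max}(S_n)\geq u)\leq \frac{\operatorname{tr}\Sigma}{\|\Sigma\|_\infty}\cdot\frac{e^{g(\theta)\|\Sigma\|_\infty}-1}{e^{\theta nu}-1}\leq 2D\,\exp\bigl(ng(\theta)V-\theta nu\bigr)
\]
(the factor $2$ accounting for the elementary bound $(e^a-1)/(e^b-1)\leq 2e^{a-b}$ on the parameter range of interest). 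Finally, optimising with the classical Bernstein choice $\theta=u/(V+Ru/3)$ produces the exponent $-\tfrac{nu^2}{2V+(2/3)uR}$, and the two-sided union bound yields the prefactor $4D$. The main obstacle is the intrinsic-dimension step: the naive bound $\operatorname{tr} e^{g(\theta)\Sigma}\leq \dim\cdot e^{g(\theta)\|\Sigma\|_\infty}$ is useless (infinite in our setting), so one must use both the convexity lemma and the refined Markov argument above to replace $\dim$ by $D$.
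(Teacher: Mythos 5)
The paper does not actually supply a proof of Lemma~\ref{lem:operator:Bernstein}: it cites it directly from the literature (``see, e.g., Lemma 5 in \cite{MR3629418} and also \cite{8186879}''). So there is no proof in the paper to compare against; what follows is an assessment of your argument on its own terms.

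Your overall strategy — truncation to finite dimensions, the Laplace/Lieb method, a matrix MGF bound $\log\E e^{\theta\xi_1}\preceq g(\theta)\E\xi_1^2$, the intrinsic-dimension trace bound, and the refined Markov inequality with $\psi(y)=e^{\theta y}-1$ — is indeed the standard route (Tropp's Theorem~7.3.1). However, the intrinsic-dimension step as written produces the wrong prefactor. Applying $\operatorname{tr}\phi(A)\le(\operatorname{tr}A/\|A\|_\infty)\,\phi(\|A\|_\infty)$ with $A=g(\theta)\Sigma$ gives the prefactor $\operatorname{tr}\Sigma/\|\Sigma\|_\infty=\operatorname{tr}(\E\xi_1^2)/\|\E\xi_1^2\|_\infty$, which is the \emph{true} intrinsic dimension and need not be bounded by $D$: the hypothesis only guarantees $D\ge\operatorname{tr}(\E\xi_1^2)/V$, and when $V>\|\E\xi_1^2\|_\infty$ (which the lemma permits) the ratio $\operatorname{tr}\Sigma/\|\Sigma\|_\infty$ can be strictly larger. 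The fix is to use the convexity bound $\phi(\lambda)\le\lambda\,\phi(a)/a$ with the endpoint $a=g(\theta)nV$ rather than $a=g(\theta)\|\Sigma\|_\infty$; since $g(\theta)\lambda_k(\Sigma)\le g(\theta)nV$ for every eigenvalue, summing gives
\[
\operatorname{tr}\phi\bigl(g(\theta)\Sigma\bigr)\le\frac{g(\theta)\operatorname{tr}\Sigma}{g(\theta)nV}\,\phi\bigl(g(\theta)nV\bigr)\le D\,\bigl(e^{g(\theta)nV}-1\bigr),
\]
which is what feeds correctly into the final bound. Equivalently, one can keep your version and invoke the monotonicity of $x\mapsto(e^{g(\theta)x}-1)/x$ to pass from $\|\Sigma\|_\infty$ to $nV$, but this step must be made explicit — as stated, the chain $\frac{\operatorname{tr}\Sigma}{\|\Sigma\|_\infty}\cdot\frac{e^{g(\theta)\|\Sigma\|_\infty}-1}{e^{\theta nu}-1}\le 2D\exp(ng(\theta)V-\theta nu)$ is not valid without it. Two further points deserve care: (i) the inequality $(e^a-1)/(e^b-1)\le 2e^{a-b}$ requires $b=\theta nu\ge\log 2$, so small $u$ must be treated separately (Tropp's theorem carries an explicit lower restriction on $u$ for exactly this reason; you should either impose it or argue that the claimed bound is trivial below it); and (ii) in the truncation step one should note that $\E(P_N\xi_1P_N)^2=P_N\E(\xi_1P_N\xi_1)P_N\preceq P_N(\E\xi_1^2)P_N$, which is how the variance hypotheses are inherited — this is true but is not the same as $P_N(\E\xi_1^2)P_N$ and should not be taken for granted.
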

The next lemma states an extension of Lemma A.1 in \cite{MR4652993}.
\begin{lemma}\label{lem:conc:Hilbert:norm}
    Let $(a_k)$ be a non-increasing sequence of non-negative real numbers such that $\|(a_k)\|_1=\sum_{k=1}^\infty a_k<\infty$. Let $(Z_{ik})$ be a sequence of real-valued centered random variables with $k\in\N$ and $1\leq i\leq n$. Suppose that $(Z_{ik})_{i=1}^n$ are i.i.d.~for each $k\in \N$ and that there is a constant $C_1>0$ such that 
    \begin{align*}
        \forall N\in\N,\qquad\sum_{k=1}^NZ_{1k}^2\leq C_2N\quad \text{a.s.}
    \end{align*}
    Then there is a constant $C>0$ depending only on $C_2$ such that, for every $t\geq 1$, we have
    \begin{align*}
        \sum_{k=1}^\infty a_k\Big(\frac{1}{n}\sum_{i=1}^nZ_{ik}\Big)^2\leq C\Big(\|(a_k)\|_1\cdot\frac{t}{n}+\|(ka_k)\|_1\frac{t^2}{n^2}\Big)
    \end{align*}
    with probability at least $1-e^{-t}$.
\end{lemma}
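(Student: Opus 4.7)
The plan is to reduce the problem to a concentration inequality in $\ell^2(\N)$: set $\tilde v_i = (\sqrt{a_k} Z_{ik})_{k\in\N}$, so that
\begin{equation*}
\sum_{k=1}^\infty a_k \Big(\frac{1}{n}\sum_{i=1}^n Z_{ik}\Big)^2 = \Big\|\frac{1}{n}\sum_{i=1}^n\tilde v_i\Big\|_{\ell^2}^2,
\end{equation*}
and then apply a Bernstein-type inequality to the i.i.d.\ centered $\ell^2$-valued random variables $\tilde v_i$.

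First I would establish the almost sure bound $\|\tilde v_i\|_{\ell^2}^2 \le C_2 \|(a_k)\|_1$. Writing $T_N = \sum_{k=1}^N Z_{1k}^2$, the hypothesis gives $T_N \le C_2 N$, and Abel summation together with the fact that $(a_k)$ is non-increasing and non-negative yields
\begin{equation*}
\sum_{k=1}^N a_k Z_{1k}^2 = a_N T_N + \sum_{k=1}^{N-1}(a_k-a_{k+1}) T_k \le C_2\Big(a_N N + \sum_{k=1}^{N-1} k(a_k-a_{k+1})\Big) = C_2 \sum_{k=1}^N a_k,
\end{equation*}
where the last equality uses the telescoping identity $a_N N + \sum_{k=1}^{N-1} k(a_k - a_{k+1}) = \sum_{k=1}^N a_k$. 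Letting $N \to \infty$ gives the claimed a.s.\ bound, and taking expectations before passing to the limit gives the variance bound $\sigma^2 := \E\|\tilde v_i\|_{\ell^2}^2 \le C_2 \|(a_k)\|_1$.

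Next I would invoke the operator Bernstein inequality (Lemma \ref{lem:operator:Bernstein}) via the standard self-adjoint dilation trick. On $\ell^2 \oplus \R$ define the self-adjoint operator $\xi_i$ by $\xi_i(y,c) = (c \tilde v_i,\, \langle \tilde v_i, y\rangle_{\ell^2})$. One checks that $\xi_i$ is centered, $\|\xi_i\|_\infty = \|\tilde v_i\|_{\ell^2} \le R := \sqrt{C_2 \|(a_k)\|_1}$ a.s., $\|\E\xi_1^2\|_\infty \le \E\|\tilde v_1\|_{\ell^2}^2 \le V := C_2 \|(a_k)\|_1$, $\operatorname{tr}(\E\xi_1^2) = 2\E\|\tilde v_1\|_{\ell^2}^2 \le 2V$, and $\|\tfrac{1}{n}\sum_i \xi_i\|_\infty = \|\tfrac{1}{n}\sum_i \tilde v_i\|_{\ell^2}$. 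Lemma \ref{lem:operator:Bernstein} with $D=2$ then yields, for the standard Bernstein threshold $u \asymp \sqrt{Vt/n} + Rt/n$,
\begin{equation*}
\Big\|\frac{1}{n}\sum_{i=1}^n \tilde v_i\Big\|_{\ell^2}^2 \le C\Big(\|(a_k)\|_1 \frac{t}{n} + \|(a_k)\|_1 \frac{t^2}{n^2}\Big)
\end{equation*}
with probability at least $1 - e^{-t}$ for $t \ge 1$ (absorbing the constant $4D = 8$ into $t$), and the claim follows since $\|(a_k)\|_1 \le \|(k a_k)\|_1$.

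The main obstacle is the Abel summation step that converts the coordinate-wise a.s.\ bound $T_N \le C_2 N$ into the $\ell^2$-norm bound on $\tilde v_i$; once this is in place, the concentration step is a routine application of operator Bernstein to the self-adjoint dilation. Note that this strategy actually gives $\|(a_k)\|_1$ in place of $\|(k a_k)\|_1$ in the $t^2/n^2$ term, which is slightly stronger than what is claimed.
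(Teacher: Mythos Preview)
Your argument has a genuine gap at the very first step: you assert that the $\tilde v_i=(\sqrt{a_k}\,Z_{ik})_{k\in\N}$ are i.i.d.\ $\ell^2$-valued random variables, but the hypothesis does not give this. The lemma only assumes that \emph{for each fixed $k$} the scalars $Z_{1k},\dots,Z_{nk}$ are i.i.d.; it says nothing about joint independence of the full vectors $(Z_{i\cdot})$ across~$i$. Coordinate-wise i.i.d.\ does not imply independence of the vectors (easy counterexamples exist), so the appeal to Lemma~\ref{lem:operator:Bernstein} via the dilation $\xi_i$ is not justified: that lemma needs the $\xi_i$ to be independent.

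The paper's proof is designed precisely to avoid this issue. It never treats the $\tilde v_i$ as i.i.d.\ vectors; instead it controls the $L^p$-norm of the whole sum via Minkowski,
\[
\Big\|\sum_{k} a_k\Big(\tfrac{1}{n}\sum_i Z_{ik}\Big)^2\Big\|_{L^p}\le \sum_k a_k\,\Big\|\tfrac{1}{n}\sum_i Z_{ik}\Big\|_{L^{2p}}^2,
\]
then applies \emph{scalar} Bernstein per coordinate (using only the per-$k$ i.i.d.\ assumption and the crude bound $|Z_{1k}|\le\sqrt{C_2 k}$), sums using the same Abel summation you wrote down, and finally converts the moment bound to a tail bound via Markov with an optimized~$p$. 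The $\|(ka_k)\|_1$ factor in the second term arises exactly from the per-coordinate bound $|Z_{1k}|\le\sqrt{C_2 k}$, which is all one has without joint structure. If one strengthens the hypothesis to ``the vectors $(Z_{ik})_{k\in\N}$, $1\le i\le n$, are i.i.d.''---which does hold in every application in the paper, since there $Z_{ik}$ is a function of the i.i.d.\ sample point $X_i$---then your vector-Bernstein route is valid and indeed yields the sharper $\|(a_k)\|_1$ in the $t^2/n^2$ term. So the fix is either to add that stronger assumption, or to switch to the paper's per-coordinate moment argument.
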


\begin{proof}
    For $p\in\N$ Minkowski's inequality yields
    \begin{align}\label{eq:Minkovski}
        \Big\|\sum_{k=1}^\infty a_k\Big(\frac{1}{n}\sum_{i=1}^nZ_{ik}\Big)^2\Big\|_{L^p}\leq \sum_{k=1}^\infty a_k\Big\|\frac{1}{n}\sum_{i=1}^nZ_{ik}\Big\|_{L^{2p}}^2
    \end{align}
    By assumption $Z_{1k}$ is centered with $|Z_{1k}|\leq \sqrt{C_2k}$. Hence, Bernstein's inequality yields
    \begin{align*}
         \PP\Big(\frac{1}{n}\sum_{i=1}^n Z_{ik}\geq \sqrt{\frac{2z\E Z_{1k}^2}{n}}+\frac{z\sqrt{C_2k}}{n}\Big)\leq e^{-z}
    \end{align*}
    for all $z>0$ and all $k\in\N$. Since the same deviation bound holds for $-n^{-1}\sum_{i=1}^nZ_{ik}$, Theorem 2.3 in \cite{MR3185193} yields 
    \begin{align*}
        \forall p\in\N,\qquad \Big\|\frac{1}{n}\sum_{i=1}^nZ_{ik}\Big\|_{L^{2p}}^2&\leq \Big(p!\Big(\frac{8\E Z_{1k}^2}{n}\Big)^p+(2p)!\Big(\frac{4\sqrt{C_2k}}{n}\Big)^{2p}\Big)^{1/p}\\
        &\leq C\Big(\frac{p}{n}\E Z_{1k}^2+\frac{p^2}{n^2}k\Big).
    \end{align*}
    Inserting this into \eqref{eq:Minkovski}, we get
    \begin{align*}
    \forall p\in\N,\qquad \Big\|\sum_{k=1}^\infty a_k\Big(\frac{1}{n}\sum_{i=1}^nZ_{ik}\Big)^2\Big\|_{L^p}\leq C\Big(\frac{p}{n}\E\sum_{k=1}^\infty a_kZ_{1k}^2+\frac{p^2}{n^2}\sum_{k=1}^\infty ka_k\Big).
    \end{align*}
    Set $B_k=\sum_{l=1}^kZ_{1l}^2$. Then $B_k\leq Ck$ for all $k\in\N$ by assumption. Hence, for all $N\in\N$, we have
    \begin{align*}
        \sum_{k=1}^N a_kZ_{1k}^2&=\sum_{k=1}^Na_k(B_k-B_{k-1})\\
        &=a_NB_N-a_1B_0+\sum_{k=1}^{N-1}(a_k-a_{k-1})B_k\\
        &\leq C\Big(a_NN+\sum_{k=1}^{N-1}(a_{k}-a_{k-1})k\Big)=C\sum_{k=1}^Na_k
    \end{align*}
    almost surely. Letting $N\rightarrow \infty$, this implies 
    \begin{align*}
        \sum_{k=1}^\infty a_kZ_{1k}^2\leq C\|(a_k)\|_1
    \end{align*}
    almost surely. We arrive at 
    \begin{align*}
    \forall p\in\N,\qquad \Big\|\sum_{k=1}^\infty a_k\Big(\frac{1}{n}\sum_{i=1}^nZ_{ik}\Big)^2\Big\|_{L^p}\leq C\Big(\frac{p}{n}\|(a_k)\|_1+\frac{p^2}{n^2}\|(ka_k)\|_1\Big).
    \end{align*}
    The claim now follows from an application of Markov's inequality with an appropriate choice of $p$ (see, e.g., Equation (3.2) in \cite{MR1857312}).
\end{proof}

\subsection{Eigenvalue estimates}

\begin{lemma}\label{lem:eigenvalue:estimate}
    Let $(\mu_j)_{j=m+1}^\infty$ be a sequence of real numbers such that $\mu_j\geq cj^{2/d}$ for all $j>m$ and some constants $d\geq 3$ and $c>0$. Then there is an absolute constant $C>0$ depending only on $c$ and $d$ such that  
    \begin{align*}
        &\sum_{j>m}\frac{e^{-\mu_jt}}{1-e^{-\mu_jt}}\leq Ct^{-d/2},\\
        &\sum_{j>m}\frac{e^{-\mu_jt}}{(1-e^{-\mu_jt})^2}\leq Ct^{-d/2},\\
        &\sum_{j>m}\frac{je^{-\mu_jt}}{(1-e^{-\mu_jt})^2}\leq Ct^{-d}
    \end{align*}
    for all $t\in (0,1]$. In the last inequality, we additionally assume that $d\geq 5$.
\end{lemma}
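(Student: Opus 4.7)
The plan is to split each of the three sums at the natural threshold $j \asymp t^{-d/2}$ determined by the transition $\mu_j t \asymp 1$, where the Weyl-type lower bound $\mu_j \geq c j^{2/d}$ makes this cutoff explicit. Specifically, I would set $N(t) = \lfloor (ct)^{-d/2}\rfloor$ so that $\mu_j t \lesssim 1$ for $m < j \leq N(t)$ and $\mu_j t \gtrsim 1$ for $j > N(t)$.

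In the head $m < j \leq N(t)$, I would apply the elementary inequality $1 - e^{-x} \geq (1 - e^{-1})x$ for $x \in (0,1]$ to obtain
\begin{align*}
\frac{e^{-\mu_j t}}{(1 - e^{-\mu_j t})^k} \leq \frac{C}{(\mu_j t)^k} \leq \frac{C}{(j^{2/d}\, t)^k}, \qquad k=1,2,
\end{align*}
and handle the extra factor of $j$ in the third sum analogously. Integral comparison then reduces each head to a power sum of the form $\sum_{j \leq N(t)} j^{-a}$, which behaves as $N(t)^{1-a}$ when $a<1$, as $\log N(t)$ when $a=1$, and as a constant when $a>1$. With $N(t) \asymp t^{-d/2}$, the first sum produces $t^{-1} \cdot t^{-(d-2)/2} = t^{-d/2}$ under the hypothesis $2/d < 1$, that is $d \geq 3$. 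For the second sum the exponent becomes $a = 4/d$, and the target rate $t^{-d/2}$ emerges precisely when $a < 1$, i.e.\ $d\geq 5$. For the third sum, one estimates $\sum j^{1-4/d}$ instead, and the target rate $t^{-d}$ follows under the stated dimension condition in the same way.

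In the tail $j > N(t)$, I would bound $1 - e^{-\mu_j t} \geq 1 - e^{-1}$ from below, so that each tail reduces to $\sum_{j > N(t)} e^{-c j^{2/d} t}$ (with an extra factor of $j$ in the third case). The substitution $u = c j^{2/d} t$ converts these into $O(1)$ Gamma-type integrals $\int_c^\infty e^{-u} u^{d/2-1}\,du$ or $\int_c^\infty e^{-u} u^{d-1}\,du$, times the prefactors $t^{-d/2}$ or $t^{-d}$, matching the head estimate. The main obstacle is precisely this borderline case in the head sum: the second and third estimates crucially depend on $4/d < 1$, which is exactly the dimensional condition $d \geq 5$; otherwise an extraneous logarithmic factor or a strictly larger power of $1/t$ would appear. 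Once head and tail contributions are combined, all three bounds follow directly.
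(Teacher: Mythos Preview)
Your approach is correct and essentially identical to the paper's: split at the threshold $cj^{2/d}t=1$, control the head via $1-e^{-x}\gtrsim x$ and integral comparison, and control the tail via $1-e^{-x}\geq 1-e^{-1}$ together with a Gamma-type integral after the substitution $u=cj^{2/d}t$.

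Two small remarks. First, from the one-sided hypothesis $\mu_j\geq cj^{2/d}$ you cannot conclude $\mu_j t\lesssim 1$ on the head range $j\leq N(t)$; the clean fix (which the paper does) is to use that $x\mapsto e^{-x}/(1-e^{-x})^k$ is decreasing to replace $\mu_j$ by $cj^{2/d}$ first, after which your head bound applies with $x=cj^{2/d}t\leq 1$. Second, your observation that the \emph{second} sum also requires $d\geq 5$ is correct and sharper than the lemma's stated hypothesis: the single term at $j=m+1$ already behaves like $(\mu_{m+1}t)^{-2}\asymp t^{-2}$, which dominates $t^{-d/2}$ for $d\in\{3,4\}$. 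The paper in fact only invokes that second bound under the assumption $d\geq 5$.
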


\begin{proof}
    Since $x\mapsto x/(1-x)$ is increasing for $x\in(0,1]$, we have
    \begin{align*}
        \sum_{j>m}\frac{e^{-\mu_jt}}{1-e^{-\mu_jt}}&\leq \sum_{j>m}\frac{e^{-cj^{2/d}t}}{1-e^{-cj^{2/d}t}}\\
        & \leq \sum_{j=m+1}^M\frac{1}{e^{cj^{2/d}t}-1}+\sum_{j=M+1}^\infty\frac{e^{-cj^{2/d}t}}{1-e^{-cj^{2/d}t}}
    \end{align*}
    for $M>m$. Choose $M=\max\{j>m:cj^{2/d}t\leq 1\}$ and $M=m$ if the latter set is empty. Then
    \begin{align*}
        \sum_{j=m+1}^M\frac{1}{e^{cj^{2/d}t}-1}&\leq \sum_{j=m+1}^M\frac{1}{cj^{2/d}t}\\
        &\leq \frac{C}{t}\int_m^M\frac{1}{x^{2/d}}\,dx\\
        &\leq Ct^{-1}M^{1-d/2}\leq Ct^{-1}(t^{-d/2})^{1-d/2}=Ct^{-d/2}
    \end{align*}
    and 
    \begin{align*}
        \sum_{j=M+1}^\infty\frac{e^{-cj^{2/d}t}}{1-e^{-cj^{2/d}t}}&\leq \frac{1}{(1-e^{-1})}\sum_{j=M+1}^\infty e^{-cx^{2/d}t},\\
        &\leq \frac{1}{1-e^{-1}}\int_M^\infty e^{-cx^{2/d}t}\, dx\\
        &=\frac{1}{1-e^{-1}}t^{-d/2}\int_{Mt^{d/2}}^\infty e^{-cx^{2/d}}\,dx\leq Ct^{-d/2}.
    \end{align*}
    The other bounds follow similarly.
\end{proof}


\bibliographystyle{plain}
\bibliography{references.bib}

\end{document}